\documentclass[reqno,final,11pt]{amsart}
\usepackage{natbib}  
\usepackage{fancyhdr} 
\usepackage{color} 
\usepackage{hyperref} 
\usepackage{graphicx} 
\usepackage{geometry}
\geometry{
  top=1in,            
  inner=1in,
  outer=1in,
  bottom=1in,
  headheight=3ex,       
  headsep=2ex,          
}
\usepackage[utf8]{inputenc}
\usepackage[T1]{fontenc}
\usepackage{verbatim}

\usepackage{tikz}
\usetikzlibrary{calc,shapes,backgrounds,arrows}
\usepackage{amssymb}
\usepackage[shortlabels]{enumitem}


\definecolor{aleacolor}{rgb}{0.16,0.59,0.78}

\hypersetup{
breaklinks,
colorlinks=true,
linkcolor=aleacolor,
urlcolor=aleacolor,
citecolor=aleacolor}

\pagestyle{fancy} \fancyhf{} \fancyhead[RO,LE]{\small\thepage}
\fancyhead[RE]{\small\shortauthors} \fancyhead[LO]{\small\shorttitle}

\renewcommand{\cite}{\citet}

\theoremstyle{plain}
\newtheorem{theorem}{Theorem}[section]                                          
                          
\newtheorem{lemma}[theorem]{Lemma}
\newtheorem{corollary}[theorem]{Corollary}

\theoremstyle{definition}

\theoremstyle{remark}
\newtheorem{remark}[theorem]{Remark}

\makeatletter \@addtoreset{equation}{section} \makeatother

\newcommand{\aleaIndex}[1]{\href{http://alea.impa.br/english/index_v22.htm}{\bf 22}}
\newcommand{\aleaDOI}[1]{\href{https://doi.org/10.30757/ALEA.v22-10}{DOI: 10.30757/ALEA.v22-10}}
\eheader{ALEA, Lat. Am. J. Probab. Math. Stat.}{\aleaIndex{22}}{2025}{245}{282}{\aleaDOI}
\elogo{\parbox[c]{180pt}{\includegraphics[width=170pt]{}}} 


\newcommand{\abs}[1]{\left\vert#1\right\vert}
\newcommand{\norm}[1]{\left\Vert#1\right\Vert_2}
\renewcommand{\hat}{\widehat}

\def\CP{\xrightarrow{\P}}

\def\s{\hat{s}}
\def\w{\hat{w}}

\def\i{\hat{i}}
\def\ep{\epsilon}

\def\ER{Erd\H{o}s-R\'enyi}
\def\E{\mathbb{E}}
\def\P{\mathbb{P}}
\def\CM{\mathbb{CM}}


\begin{document}

\title{SIR Epidemics on Evolving Erd\H{o}s-R\'enyi Graphs}

\author{Wenze Chen}
\address{School of Mathematics and Statistics, Donghua University, Shanghai, China, 201620.}
\email{chenwenze@mail.dhu.edu.cn} 

\author{Yuewen Hou}
\address{School of Mathematics and Statistics, Fujian Normal University, Fuzhou, China, 350007.}
\email{853345720@qq.com} 

\author{Dong Yao}
\address{Research Institute of Mathematical Science, Jiangsu Normal University, Xuzhou, China, 221116.}
\email{dongyao@jsnu.edu.cn}



\thanks{Dong Yao is supported by NSFC grant (No. 12201256) and  NSF Jiangsu Province grant (No. BK20220677)}
\thanks{Dong Yao is the corresponding author of this work.}
\subjclass[2010]{60J27} 
\keywords{SIR Epidemics; \ER \ graph; phase transition; random rewirings.}

\begin{abstract}
	In the standard SIR model, infected vertices infect their  neighbors at rate $\lambda$  independently across each edge. They also recover at rate $\gamma$. In this work we consider  the SIR-$\omega$ model where the graph structure itself co-evolves with the  SIR dynamics. Specifically,   $S-I$ connections  are broken at rate $\omega$. Then, with probability $\alpha$, $S$ rewires this edge to another uniformly chosen vertex; and with probability $1-\alpha$, this edge is simply dropped. When $\alpha=1$ the SIR-$\omega$ model becomes the evoSIR model introduced in  \cite{DOMath}, where they proved that the probability of a major outbreak in the evoSIR model evolving on an \ER($n$,$\mu/n$) graph converges to 0 as $\lambda$ approaches the critical infection rate $\lambda_c$. On the other hand, numerical experiments in \cite{DOMath}  revealed that, as  $\lambda \to \lambda_c$, (conditionally on a major outbreak) the fraction of infected vertices may not converge to $0$, leading to a \emph{discontinuous phase transition}. \cite{MR4456028} considered the SIR-$\omega$ model on an \ER($n$,$\mu/n$) graph and  proved   that, if
	\begin{equation*}
		\omega(2\alpha-1)>\gamma \mbox{ and } \mu>\frac{2\omega \alpha }{\omega(2\alpha-1)-\gamma},
	\end{equation*}
	then the phase transition of the final epidemic size is discontinuous. In this work we prove that the above  condition is also necessary for this discontinuity. We achieve this by proving that the time-changed SIR-$\omega$ process converges to a  singular ODE system. 
\end{abstract}

\maketitle

\section{Introduction}

In the SIR model, individuals are in one of  three states: $S=$ susceptible, $I=$ infected, $R=$ removed (cannot be infected).  In this work we consider the SIR model on a graph $G$ that can be thought of as the social structure of the underlying community: vertices represent individuals and edges connections between them. $S-I$ edges become $I-I$ at rate $\lambda$, i.e., after a time with an Exponential($\lambda$) distribution.  An individual remains infected for an amount of time which has an Exponential($\gamma$) distribution. Once individuals leave the infected state, they enter the removed state and stay removed forever. 

In recent years there has been a surge of interest in epidemic  processes on \emph{dynamic} networks \cite{MR4474536,MR4455874,MR4456028,MR4125785,MR4474532}, i.e., the  graph structure will not remain static but rather evolve with time.  \cite{MR3571300} introduced the \emph{SIR-$\omega$ model} where the graph is allowed to  co-evolve with the epidemic. In addition to the dynamics in the standard SIR model, in  the SIR-$\omega$ model $S-I$ edges are broken at rate $\omega$. With probability $1-\alpha$ the susceptible individual drops this edge, and with probability $\alpha$ $S$ re-connects to another individual chosen uniformly at random from the other $n-2$ individuals in the graph.  Mathematical analysis for this model  has been done in  \cite{MR4456028, MR3571300, LBSB}. When $\alpha=1$, the SIR-$\omega$ model is also called the \emph{evoSIR model},   where `evo' stands for `evolving'. The evoSIR model was studied in \cite{DOMath} and the authors there proved that,  similarly to the case of static graphs, as the infection rate $\lambda \searrow \lambda_c$, i.e., $\lambda$ approaches the critical value $\lambda_c$ from above, the probability of a major outbreak converges to 0. (For $\lambda <\lambda_c$, the probability of a major outbreak is 0, so we do not need to consider the limit of $\lambda \nearrow \lambda_c$.) In other words, the transition for the probability of a major outbreak is always continuous. However, as discovered  by simulations in \cite{DOMath}, the evoSIR model exhibits \emph{discontinuous phase transitions} in terms of the final epidemic size for certain values  of parameters of the model. More precisely, when  $\lambda$ approaches  $\lambda_c$, conditionally on a major outbreak the fraction of infected individuals \emph{doesn't} converge to 0. This is in sharp contrast with the case of SIR epidemics on static \ER\  graphs (and the configuration model), where the phase transition for the fraction of infecteds is always continuous. To avoid potential confusion,  when we say  `phase transition' below we always refer to the transition for the final epidemic size conditionally on a major outbreak.

While the SIR-$\omega$/evoSIR model may not be actually used in real  epidemic preventions, it is interesting in at least two aspects. First, the evoSIR model can increase the epidemic threshold (see \eqref{lcevosir}) while keeping the total number of edges in the network unchanged, which can be a bit surprising at first sight. Second, from the mathematical point view, apart from the phase transition phenomenon, as we will see below the SIR-$\omega$ model  provides an example of a particle system with a singular limit, which is interesting in itself. 

For the simpler \emph{SI} epidemic where individuals stay infected forever (i.e., no recoveries), \cite{MR4456028} and \cite{MR4474532} give an (almost) necessary and sufficient condition for discontinuous phase transitions of the final epidemic size in the \emph{SI-$\omega$ model} on \ER\  graphs and the \emph{evoSI} model on the configuration model, respectively. Note that \ER\  graphs  are  contiguous to the configuration model with Poisson degree distribution. In fact, \cite{MR4456028}  found out the limit of scaled final epidemic size, which can be expressed as a solution to a certain equation (see \cite[Theorem 2.5 and equation (2.8)]{MR4456028}). 

However, the situation becomes much more complicated for SIR epidemics on evolving graphs. The paper \cite{MR4456028} give two conditions for continuous and discontinuous phase transitions of the SIR-$\omega$ model. But there is a gap between these two conditions. In addition, not much is known about  the  final epidemic size. See Section \ref{sec:bb} for more details. 

In this work we  give a necessary and sufficient condition for the continuity of the phase transition of the SIR-$\omega$ model (Theorem \ref{phase}), resolving a conjecture  in \cite[Remark 2.4]{MR4456028}. For the case of continuous phase transitions we also prove the convergence of (scaled) final epidemic size (Theorem \ref{finalsize}).  A major step in the proof is to show  that the (time-changed version of) SIR-$\omega$ process converges to  the ODE system \eqref{hats}-\eqref{hatw}, which contains a discontinuity point. This barrier forbids direct  applications of ODE approximation techniques and causes many technicalities. 

The rest of the introduction is organized as follows. We state our main results in Section \ref{sec:mr}. Section \ref{sec:pw} is devoted to a quick review of related work. Section \ref{sec:ps} contains the proof strategies as well and the organization of this paper. 

\subsection{Main results}\label{sec:mr}

We start from some terminologies and definitions. The term \emph{final epidemic size}, denoted by $\Lambda^{(n)}=\Lambda^{(n)}(\alpha,\lambda,\gamma,\mu,\omega)$, when the graph has size $n$, refers to  the total number of individuals that have ever been  infected. For SIR epidemics it is the number of vertices that are eventually removed. We say a \emph{major outbreak} occurs if the epidemic infects more than $\ep n$ individuals ($n$ is the size of the total population) for some $\ep>0$ independent of $n$, i.e., $\Lambda^{(n)}\geq \ep n$. The critical infection rate $\lambda_c$ is the smallest infection rate such that a major outbreak occurs with probability bounded away from 0 as $n\to\infty$. Mathematically,
\begin{equation*}
  	\lambda_c=\inf\{\lambda>0: \liminf_{n\to\infty}\P(\Lambda^{(n)}\geq \ep n)>0 \mbox{ for some }\ep>0\}.
\end{equation*}
The phase transition for the final epidemic size is said to be continuous if for any $\ep>0$,
\begin{equation}\label{defconti}
  	\lim_{\lambda \to \lambda_c}\P(\Lambda^{(n)}>n\ep|\mbox{a major outbreak occurs})=0.
\end{equation}
If \eqref{defconti} fails for some $\ep_0>0$, then the phase transition is said to be discontinuous. 

As mentioned in the beginning of this paper, the SIR-$\omega$ model has four parameters: $\gamma$ representing the recovery rate, $\lambda$ the infection rate, $\omega$ the rate of dropping or rewiring and $\alpha$ the probability of rewiring. 
We focus on the \ER($n$,$\mu/n$) graph, which is a random graph on $n$ vertices where each pair gets connected with probability $\mu/n$.

It has been proved in \cite{DOMath} for the evoSIR model and \cite{MR4456028} for the SIR-$\omega$ model that 
\begin{equation}\label{crit_lambda}
	\lambda_c=\frac{\omega+\gamma}{\mu-1}.
\end{equation}

Turning to the phase transition of the final epidemic size, Ball and Britton proved in \cite[Theorem 2.4]{MR4456028} that if
\begin{equation}\label{cond_discont}
	\omega(2\alpha-1)>\gamma \mbox{ and } \mu>\frac{2\omega \alpha }{\omega(2\alpha-1)-\gamma},
\end{equation}
then there exists $r_0=r_0(\mu, \gamma,\omega,\alpha)>0$ such that
\begin{equation}\label{discont}
	\lim_{n\to\infty}\P(\Lambda^{(n)}>nr_0|\mbox{a major  outbreak occurs})=1 \mbox{ for all }\lambda>\lambda_c.
\end{equation}
They also showed that the phase transition is continuous when 
\begin{equation}\label{bbcond_cont}
	\omega(3\alpha-1)\leq \gamma \mbox{ or }\mu\leq \frac{2\omega \alpha }{\omega(3\alpha-1)-\gamma}. 
\end{equation}
Note that there is a gap between \eqref{cond_discont} and \eqref{bbcond_cont}. They conjectured in \cite[Remark 2.4]{MR4456028} that \eqref{cond_discont} should be both necessary and sufficient for a discontinuous phase transition. The following theorem of our work confirms this conjecture. 

\begin{theorem}\label{phase}
	Consider the SIR-$\omega$ model starting with one infected individual and the rest being susceptible. 
	Suppose
	\begin{equation}\label{cond_cont}
		\omega(2\alpha-1)\leq \gamma \mbox{ or }\mu\leq \frac{2\omega \alpha }{\omega(2\alpha-1)-\gamma},
	\end{equation}
	then for any $\ep>0$, there exists  $\lambda_1>\lambda_c$ such that
	\begin{equation}\label{cont}
		\lim_{n\to\infty}\P(\Lambda^{(n)}<\ep n|\mbox{a major outbreak occurs})=1 \mbox{ for all }\lambda\in (\lambda_c,\lambda_1).
	\end{equation}
	Combining \eqref{cont} and \eqref{discont}, the  phase transition for the final epidemic size of the SIR-$\omega$ model  near $\lambda_c$ is discontinuous if and only if \eqref{cond_discont} holds. 
\end{theorem}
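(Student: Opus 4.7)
My plan is to build on the Ball--Britton framework for the SIR-$\omega$ model and to sharpen their analysis at the critical point $\lambda=\lambda_c$. Ball and Britton show that, conditionally on an outbreak, the rescaled final size $T^{(n)}/n$ converges as $n\to\infty$ to a deterministic limit $r^*(\lambda)$ characterized as the smallest strictly positive root of an implicit equation of the form $F(r;\lambda,\gamma,\omega,\alpha,\mu)=0$, derived from the fluid-limit ODE system tracking $(S_t,I_t,R_t)$ together with the three edge counts $|SS|_t,|SI|_t,|II|_t$. The phase transition is then continuous if and only if $r^*(\lambda)\downarrow 0$ as $\lambda\downarrow\lambda_c$, so \eqref{cont} reduces to establishing this local vanishing under \eqref{cond_cont}.

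The heart of the argument is a second-order bifurcation analysis of $F$ at $(r,\lambda)=(0,\lambda_c)$. By the definition of $\lambda_c$ in \eqref{crit_lambda} we have $\partial_r F(0;\lambda_c,\gamma,\omega,\alpha,\mu)=0$, so the local behavior of $r^*(\lambda)$ for $\lambda$ just above $\lambda_c$ is determined by the sign of the quadratic coefficient $\tfrac{1}{2}\partial_r^2 F(0;\lambda_c,\gamma,\omega,\alpha,\mu)$. A supercritical bifurcation (quadratic coefficient of the appropriate sign) produces $r^*(\lambda)\downarrow 0$ continuously, while a subcritical one produces the jump of \eqref{discont}. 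I would compute this coefficient explicitly and show that it has the supercritical sign precisely on the locus \eqref{cond_cont}, which is the exact complement of the region \eqref{cond_discont} of \cite{BB}. Given this, for every $\ep>0$ one can choose $\lambda_1>\lambda_c$ with $r^*(\lambda)<\ep/2$ on $(\lambda_c,\lambda_1)$, and the convergence $T^{(n)}/n\to r^*(\lambda)$ conditionally on an outbreak then yields \eqref{cont}.

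The main obstacle I expect is the explicit second-order expansion itself. The function $F$ is given implicitly as an integral along a trajectory of the nonlinear fluid-limit ODE system, not in closed form, so extracting $\partial_r^2 F(0;\lambda_c,\gamma,\omega,\alpha,\mu)$ requires first solving the linearized system about the disease-free equilibrium at the critical rate, then substituting this linear solution into the quadratic part of the nonlinearity and simplifying. Matching the resulting rational expression exactly with the boundary $\omega(2\alpha-1)=\gamma$ together with the hyperbola $\mu=2\omega\alpha/[\omega(2\alpha-1)-\gamma]$ is the delicate algebraic cancellation that closes the gap between \eqref{cond_cont} and the weaker \eqref{bbcond_cont}. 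A secondary but necessary technical point is to rule out any positive root of $F$ smaller than the one produced by the bifurcation for $\lambda$ in a right-neighborhood of $\lambda_c$, which I would handle via a uniform local monotonicity argument for $F(\,\cdot\,;\lambda,\gamma,\omega,\alpha,\mu)$ on a fixed neighborhood of $0$.
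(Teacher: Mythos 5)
Your plan has a fundamental gap at the very first step. You write that Ball and Britton ``show that, conditionally on an outbreak, the rescaled final size $T^{(n)}/n$ converges as $n\to\infty$ to a deterministic limit $r^*(\lambda)$,'' and you propose to proceed by analyzing that limit. But Ball and Britton only \emph{conjectured} this convergence (their Conjecture 2.1); what they actually prove is finite-time-horizon convergence of the fluid limit, which does not control the final size because the terminal time diverges. Establishing that $T^{(n)}/n$ converges to a deterministic limit is precisely the main technical contribution of the paper (Theorem \ref{finalsize}), and it occupies Sections \ref{s:construct} through \ref{sec:finalsize}. The difficulty is genuine: after the random time change the drift of $\hat I(t)$ involves the non-Lipschitz ratio $\hat I/\hat I_E$, so the classical fluid-limit theorems do not extend past $t_*$, and one needs a bespoke argument near the hitting time of zero. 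Moreover, that argument itself requires condition \eqref{cond_cont}: the upper bound $\hat\tau^{(n)}\le t_*+\ep$ is proved separately in the two cases $F(t_*)<0$ and $F(t_*)=0$, and the latter case uses \eqref{cond_cont} through the sign of $-\mu+2\omega\alpha/\lambda_c$. So you cannot cleanly separate ``first get the final-size limit $r^*(\lambda)$, then do a bifurcation analysis''; the two are entangled through the same algebraic condition.

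Granting the convergence, the remainder of your outline is in the right spirit and roughly parallels what the paper does in Section \ref{sec:contphase}, but the object you differentiate is different. The paper does not work with an implicit final-size equation in $r$; instead it works with the time-transformed ODE system \eqref{hats}--\eqref{hatw}, in which $\hat s(t)=\hat s(0)-t$ so that $t$ itself plays the role of the epidemic-size parameter, and defines $F(t)=-1-\tfrac{\gamma+\omega}{\lambda}+\mu\hat s(t)+2\hat w(t)/\hat s(t)$. Continuity then amounts to showing $t_*\to 0$ as $\lambda\downarrow\lambda_c$. The ``linear coefficient'' is $F(0)=\mu-(1+\gamma+\omega)/\lambda\to 0$ as $\lambda\to\lambda_c$, and the ``quadratic coefficient'' governing whether $t_*$ vanishes is driven by $F'(0)=-\mu+2\omega\alpha/\lambda$, whose limit at $\lambda_c$ factors exactly as
\[
-\mu+\frac{2\omega\alpha}{\lambda_c}=\Bigl(\frac{2\omega\alpha}{\gamma+\omega}-1\Bigr)\Bigl(\mu-\frac{2\omega\alpha}{2\omega\alpha-\omega-\gamma}\Bigr),
\]
which is $\le 0$ precisely under \eqref{cond_cont}. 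The paper further must treat the boundary case (equality in \eqref{cond_cont}) separately, because there $F'(0)\to 0$ as well and one must go to $F''(t)$, which is strictly negative, to force $t_*\to 0$. That two-case split is a real feature of the problem you should anticipate. Finally, because the ODE right-hand side is non-Lipschitz at $\hat i_E=0$, your ``uniform local monotonicity'' step for ruling out smaller roots is not automatic; in the paper it is handled by a comparison principle for the system and a separate existence/uniqueness argument (Theorem \ref{exiuni}, Section \ref{sec:eq=0}).

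In short: your bifurcation heuristic correctly identifies the algebraic locus, but the proof cannot start from the assumed convergence of $T^{(n)}/n$, which is the hard part and is itself only established under \eqref{cond_cont}.
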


We emphasize that, while the gap  between \eqref{cond_discont} and \eqref{bbcond_cont} may seem to be not very large, overcoming it  requires new ideas that are quite different from \cite{MR4456028}. To start we re-use the infrastructure built in \cite{MR4456028}.  Suppose the underlying graph has $n$ vertices, then we set
\begin{itemize}
	\item $S^{(n)}(t)$: the number of susceptible vertices at time $t$,
	\item $I^{(n)}(t)$: the number of infected vertices at time $t$,
	\item $I^{(n)}_E(t)$: the number of infectious  (i.e., infected--susceptible) edges at time $t$,
	\item $\tau^{(n)}$: the first time when $I_E^{(n)}$ reaches 0,
	\item $W^{(n)}(t)$: the number of susceptible--susceptible edges at time $t$ that were created by rewiring,
	\item ${\mathbf X}^{(n)}(t)=(S^{(n)}(t),I^{(n)}(t),I^{(n)}_E(t),W^{(n)}(t))$.
\end{itemize}
Note that $\tau^{(n)}$ can be understood as the \emph{terminal time} since no more vertices can be infected after $\tau^{(n)}$. Also, recall that we use  $\Lambda^{(n)}$ to denote the final epidemic size, which is equal to $n-S^{(n)}(\tau^{(n)})$. As will be clear soon, ${\mathbf X}^{(n)}$ is more amenable to analysis after a \emph{random time-transform}. Specifically, we  multiply all transition rates of the SIR-$\omega$ process  (which is a Markov chain) by $n/ (\lambda I_E^{(n)}(t))$. We use a hat to denote the  quantities in the time-changed process. For instance, $\hat{\tau}^{(n)}$ is the terminal time in the time-transformed model. Note that  ${\mathbf{X}}^{(n)}(0)=\hat{{\mathbf{X}}}^{(n)}(0)$. Also,  $\hat{\Lambda}^{(n)}$ is the final epidemic size in the time-changed model, which is necessarily equal to $\Lambda^{(n)}$. We emphasize that \emph{these notations  are fixed throughout the paper}.

Our goal is to give a deterministic approximation for $\hat{\mathbf X}(t)$. Consider the following system of equations: 
\begin{align}
	\frac{\mathrm{d}\s}{\mathrm{d}t}&=-1, \label{hats} \\
	\frac{\mathrm{d}\i}{\mathrm{d}t}&=-\frac{\gamma}{\lambda} \frac{\i}{\i_E}+1, \label{hati}\\
	\frac{\mathrm{d} \i_E}{\mathrm{d}t}&=-1-\frac{\gamma}{\lambda}+\mu \s-\frac{\i_E}{\hat{s}}  +2 \frac{\w}{\s}-\frac{\omega}{\lambda}  (1-\alpha +\alpha (1-\i)), \label{hati_E}\\
	\frac{\mathrm{d}\w}{\mathrm{d}t}&=\frac{\omega}{\lambda} \alpha  \s-2 \frac{\w}{\s}. \label{hatw}
\end{align}
Here $\hat{i}_E$ and $\hat{w}$ are non-negative functions, while $\hat{s}$ are $\hat{i}$ are non-negative and upper bounded by $1$. For $\hat{s}$ and $\hat{w}$, they are well defined up to time $t=\hat{s}(0)$; for $\hat{i}$ and $\hat{i}_E$, the appropriate domain is $t<t_*$ (see \eqref{deft_*} below). As will be clear later, these equations can be obtained by dividing the right-hand sides of equations \eqref{0s}-\eqref{0w} by $\lambda i_{E}(t)$. In fact,  Ball and Britton also mentioned this system of equations in \cite[Section 6.5.1]{MR4456028}, but they didn't make use of this to analyze the SIR-$\omega$ model. Note that \eqref{hati} contains a singularity when $\hat{i}_E=0$. \cite{MR4456028} got around the singularity issue by analyzing other auxiliary models.

Note that $(\hat{s}(t),\hat{w}(t))$ already forms a closed sub-system, which can be explicitly solved for any initial condition with $\hat{w}(0)=0$ as follows (for all $t<\hat{s}(0)$)
\begin{align}
	\hat{s}(t)&=\hat{s}(0)-t, \label{ssolve}\\
	\hat{w}(t)&=\frac{\omega \alpha}{\lambda}(\hat{s}(0)-t)^2 \log \frac{\hat{s}(0)}{\hat{s}(0)-t}. \label{wsolve}
\end{align}
We will show in Theorem \ref{exiuni} below  that the system \eqref{hats}-\eqref{hatw} admits a unique solution   under initial conditions that are relevant to the evolution of the  SIR-$\omega$ model.
\begin{itemize}
	\item \emph{Positive initial condition}: 
	\begin{equation*}
		\hat{i}(0)\in (0,1),\  \hat{s}(0)\in (0, 1-\hat{i}(0)),\ \hat{i}_E(0)>0, \ \hat{w}(0)=0.
	\end{equation*}
	\item \emph{Zero initial condition}: 
	\begin{equation*}
		\hat{s}(0)=1,  \ \hat{i}(0)=\hat{i}_E(0)=0, \ \hat{w}(0)=0.
	\end{equation*}
\end{itemize}
The positive initial condition corresponds to a positive fraction of individuals initially infected  while the zero initial condition is useful for analyzing  the case of one initially infected node. 

We denote the solution to \eqref{hats}-\eqref{hatw}   by $\hat{\mathbf{x}}(t):=(\hat{s}(t),\hat{i}(t),\hat{i}_E(t),\hat{w}(t))$. 
For any solution  $\hat{{\mathbf  x}}(t)$, we let $t_*$ to be the first time after 0 when $\hat{i}_E(t)$ hits 0, i.e., 
\begin{equation}\label{deft_*}
	t_*:=\inf\{0< t\leq \hat{s}(0): \hat{i}_E(t)=0\}.
\end{equation}
Here we use the convention that $\inf \emptyset=\hat{s}(0)$. We shall show in Section \ref{sec:eq=0} that $t_*<\hat{s}(0)$. 

We now state the existence and uniqueness theorem up to time $t_*$.  

\begin{theorem}\label{exiuni}
	For either positive initial condition or zero initial condition (in this case assuming additionally that $\lambda>\lambda_c$), there exists a unique solution $\hat{\mathbf{x}}(t)$ to the system \eqref{hats} -\eqref{hatw} up to time $t_*$.
\end{theorem}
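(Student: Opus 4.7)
The plan is to exploit the fact that \eqref{hats} and \eqref{hatw} form a closed subsystem with explicit solutions \eqref{ssolve}-\eqref{wsolve}, so that $\hat{s}(\cdot)$ and $\hat{w}(\cdot)$ can be regarded as known smooth functions of $t$ on $[0, \hat{s}(0))$. Substituting them into \eqref{hati}-\eqref{hati_E} reduces the problem to a non-autonomous planar ODE for $(\hat{i}, \hat{i}_E)$ whose right-hand side, call it $F(t, \hat{i}, \hat{i}_E)$, is $C^\infty$ on the open region $\{(t, \hat{i}, \hat{i}_E): 0 \le t < \hat{s}(0),\, \hat{i}_E > 0\}$.

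For the positive initial condition, both $\hat{s}(0)$ and $\hat{i}_E(0)$ are strictly positive, so $F$ is locally Lipschitz at the initial datum. The Picard--Lindelöf theorem gives a unique local $C^1$ solution; by the standard extension-of-solutions theorem, the solution continues uniquely as long as the trajectory stays in the region where $F$ is smooth, i.e. as long as $\hat{s}(t) > 0$ and $\hat{i}_E(t) > 0$. The first constraint allows $t$ up to $\hat{s}(0)$, and since Section \ref{sec:eq=0} establishes $t_* < \hat{s}(0)$, we obtain existence and uniqueness on $[0, t_*)$, extended continuously to $[0, t_*]$.

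The zero initial condition is the delicate case because $F$ is singular at $t = 0$: both $\hat{i}$ and $\hat{i}_E$ vanish, so the ratio $\hat{i}/\hat{i}_E$ in \eqref{hati} is indeterminate. The plan is to remove the singularity by the change of variables $u(t) := \hat{i}(t)/t$ and $v(t) := \hat{i}_E(t)/t$ for $t > 0$, which is tailored to the expected linear growth $\hat{i}(t) \sim c_1 t$, $\hat{i}_E(t) \sim c_2 t$ near $0$. Inserting $\hat{s}(0) = 1$ and the expansion $\hat{w}(t)/t \to \omega\alpha/\lambda$ from \eqref{wsolve} into \eqref{hati}-\eqref{hati_E} and matching leading orders in $t$ forces
\[
v(0) = c_2 := \mu - 1 - \frac{\gamma + \omega}{\lambda} = (\gamma + \omega)\Bigl(\frac{1}{\lambda_c} - \frac{1}{\lambda}\Bigr), \qquad u(0) = c_1 := \frac{\lambda c_2}{\lambda c_2 + \gamma}.
\]
The hypothesis $\lambda > \lambda_c$ gives $c_2 > 0$, so $v$ stays bounded away from $0$ on a neighbourhood of $0$ and the transformed system $(\dot u, \dot v) = \widetilde F(t, u, v)$ has a smooth, locally Lipschitz right-hand side there. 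A contraction-mapping argument on a short interval $[0, \delta]$ then produces a unique $(u, v)$ with $(u(0), v(0)) = (c_1, c_2)$, and setting $(\hat{i}, \hat{i}_E)(t) = (t u(t), t v(t))$ gives a unique solution of the original system on $[0, \delta]$. Since $\hat{i}_E(\delta) > 0$, the positive-initial-condition argument above extends it uniquely to $[0, t_*]$.

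The principal obstacle is the uniqueness part of the zero initial condition, where one must exclude \emph{a priori} other candidate solutions behaving sub-linearly or super-linearly at $t = 0^+$. The required step is to show that any solution with $\hat{i}(0) = \hat{i}_E(0) = 0$ and $\hat{i}_E \not\equiv 0$ necessarily satisfies $\hat{i}(t)/t \to c_1$ and $\hat{i}_E(t)/t \to c_2$; this is precisely the place where the strict inequality $\lambda > \lambda_c$ (equivalently $c_2 > 0$) is indispensable, since a vanishing $c_2$ would allow the trivial branch $\hat{i} \equiv \hat{i}_E \equiv 0$ to coexist with a non-trivial one. Once these linear asymptotics are pinned down, the $(u, v)$-problem is a standard IVP and uniqueness is immediate from Picard--Lindelöf.
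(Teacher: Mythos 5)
Your treatment of the positive initial condition is essentially the same as the paper's: Picard--Lindel\"of plus continuation, with the observation that the right-hand side is locally Lipschitz while $\hat{i}_E$ is bounded away from zero. (The paper spells out the Gronwall estimate near $t_*$ using the cutoff $b_\delta$, but that is the same content.)

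For the zero initial condition you take a genuinely different route, and there the proposal has two real gaps. You pass to $u(t)=\hat{i}(t)/t$, $v(t)=\hat{i}_E(t)/t$ and correctly identify the expected limits $c_1=\lambda c_2/(\lambda c_2+\gamma)$, $c_2=\mu-1-(\gamma+\omega)/\lambda$. But after this substitution the system has the form
\begin{equation*}
t\,u'(t)=g_1(t,u,v),\qquad t\,v'(t)=g_2(t,u,v),\qquad g_1(0,c_1,c_2)=g_2(0,c_1,c_2)=0,
\end{equation*}
which is \emph{still} a singular IVP: the factor $1/t$ on the right-hand side does not disappear, so Picard--Lindel\"of does not apply. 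You would need a Briot--Bouquet type theorem, and you would need to verify that the Jacobian $D_{(u,v)}g(0,c_1,c_2)$ has eigenvalues with negative real parts (it does — it is upper-triangular with entries $-\gamma/(\lambda c_2)-1$ and $-1$), to conclude local existence and uniqueness for the singular problem. None of that is set up. The statement that the $(u,v)$-problem becomes a ``standard IVP'' once the asymptotics are known is simply incorrect.

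The second and more serious gap is the one you yourself flag as the ``principal obstacle'': you must show that every non-trivial solution with $\hat{i}(0)=\hat{i}_E(0)=0$ satisfies the linear asymptotics $\hat{i}(t)/t\to c_1$, $\hat{i}_E(t)/t\to c_2$. You assert that $\lambda>\lambda_c$ makes this work but give no argument, and the matching computation you perform presupposes the linear ansatz rather than deriving it. The paper avoids this problem altogether: it proves a Kamker--M\"uller comparison principle (Lemma~\ref{compare-1}), constructs the zero-initial-condition solution as a \emph{monotone decreasing limit} of solutions started from $(\ep,\ep)$, establishes the a priori bounds $\hat{i}(t)\leq t$ and $\hat{i}'_E(0)=c_2>0$ to get $\limsup_{t\to 0}\hat{i}/\hat{i}_E<\infty$, and then shows any two solutions must be ordered (Step~1) and hence equal by a Gronwall estimate (Step~2). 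That route gives both the existence and the uniqueness without ever needing to pin down the exact asymptotic slope in advance. As written, your proposal replaces this mechanism by an unproved assertion, so the uniqueness claim for the zero initial condition does not go through.
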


With Theorem \ref{exiuni} established, we can now state the convergence theorem for  the stochastic process $\hat{\mathbf {X}}/n$. 

\begin{theorem}\label{limitode}
	For either positive initial condition or zero initial condition, suppose
  \begin{equation*}
    \hat{\mathbf{X}}^{(n)}(0)/n \CP \hat{\mathbf{x}}(0),
  \end{equation*}
	then 
	\begin{equation}\label{eq:hatsw}
		\sup_{t\leq \hat{s}(0) \wedge \hat{\tau}^{(n)}} \left( \abs{\frac{1}{n}\hat{S}^{(n)}(t)-\hat{s}(t) } + \abs{\frac{1}{n}\hat{W}^{(n)}(t)-\hat{w}(t) } \right) \CP 0,
	\end{equation}
	and 
	\begin{equation}\label{eq:hatx}
		\sup_{t\leq t_* \wedge \hat{\tau}^{(n)}} \norm{\frac{1}{n}\hat{\mathbf{X}}^{(n)}(t)-\hat{\mathbf{x}}(t) }\CP 0.
	\end{equation}
\end{theorem}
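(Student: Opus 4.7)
The plan is to apply the classical martingale--Gronwall method for density-dependent Markov chains, using a stopping-time bootstrap to handle the singularity of the drift as $\hat{i}_E\to 0$.

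First, I would write the time-changed pure-jump Markov chain $\hat{\mathbf X}^{(n)}$ as its predictable compensator plus a martingale. After identifying the rate and the mean jump of each transition type (infection, recovery, drop, rewire, together with the random changes to $\hat{W}$ triggered by infections), this gives a decomposition
\[
\frac{\hat{\mathbf X}^{(n)}(t)}{n}=\frac{\hat{\mathbf X}^{(n)}(0)}{n}+\int_0^t F\!\left(\frac{\hat{\mathbf X}^{(n)}(s)}{n}\right)ds+\frac{1}{n}\hat{M}^{(n)}(t)+E^{(n)}(t),
\]
where $F$ is the right-hand side of \eqref{hats}--\eqref{hatw}, $\hat{M}^{(n)}$ is a vector of square-integrable martingales, and $\|E^{(n)}\|_\infty=O(1/n)$ absorbs finite-population corrections such as $(S-1)/(n-2)-S/n$. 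By Doob's $L^2$ inequality, $\|\hat{M}^{(n)}\|_\infty/n\to 0$ in probability on any time interval on which the instantaneous rates stay $O(n)$. The only rate that can exceed this scale is the time-changed recovery rate $\gamma\hat{I}^{(n)}n/(\lambda\hat{I}_E^{(n)})$, which is $O(n)$ precisely when $\hat{I}_E^{(n)}/n$ is bounded away from $0$.

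For the $(\hat{s},\hat{w})$ sub-system, neither drift involves $\hat{i}_E$, and both are Lipschitz on $\{\hat{s}\ge\eta\}$ for any $\eta>0$. Since $\hat{s}(t)=\hat{s}(0)-t$ stays above $\eta$ on $[0,\hat{s}(0)-\eta]$, Gronwall applied to $(\hat{S}^{(n)}/n-\hat{s},\hat{W}^{(n)}/n-\hat{w})$ yields \eqref{eq:hatsw} on $[0,(\hat{s}(0)-\eta)\wedge\hat{\tau}^{(n)}]$, and letting $\eta\downarrow 0$ using the explicit forms \eqref{ssolve}--\eqref{wsolve} extends the bound to $\hat{s}(0)\wedge\hat{\tau}^{(n)}$. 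For the full system I would fix small $\eta>0$, set $\delta:=\tfrac12\inf_{t\le t_*-\eta}\hat{i}_E(t)>0$, and introduce the stopping time
\[
\sigma^{(n)}_\eta:=(t_*-\eta)\wedge\hat{\tau}^{(n)}\wedge\inf\!\bigl\{t:\,\|\hat{\mathbf X}^{(n)}(t)/n-\hat{\mathbf x}(t)\|>\delta\bigr\}.
\]
On $[0,\sigma^{(n)}_\eta]$ both $\hat{i}_E$ and $\hat{I}_E^{(n)}/n$ stay above $\delta$, so $F$ is Lipschitz there with some constant $L_\eta$; Gronwall then forces $\sigma^{(n)}_\eta=(t_*-\eta)\wedge\hat{\tau}^{(n)}$ with high probability and yields uniform closeness on $[0,(t_*-\eta)\wedge\hat{\tau}^{(n)}]$ in probability.

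The main obstacle is extending this estimate to $t_*\wedge\hat{\tau}^{(n)}$. Near $t_*$ the recovery rate blows up and $L_\eta$ diverges, so the Gronwall bound degrades as $\eta\downarrow 0$. The remedy is to carry out the bootstrap for each fixed $\eta$, and then separately control the oscillation of $\hat{\mathbf X}^{(n)}/n$ on the short interval $[t_*-\eta,t_*\wedge\hat{\tau}^{(n)}]$ by using the fact that each recovery consumes a previously-infected vertex, so the total recovery count there is bounded by the remaining infection count; uniform continuity of $\hat{\mathbf x}$ on $[0,t_*]$ handles the ODE side, and letting $\eta\downarrow 0$ yields \eqref{eq:hatx}.
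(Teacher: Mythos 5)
Your high-level strategy matches the paper's: decompose the time-changed chain into drift plus martingale via Dynkin's formula, control the martingale through its quadratic variation and Doob's $L^2$ inequality, prove convergence on $[0, t_*-\eta]$ where the drift is well behaved, and then handle $[t_*-\eta, t_*\wedge\hat{\tau}^{(n)}]$ by a separate increment bound using the fact that, in the time-changed dynamics, the infection rate equals $n$ and therefore caps the upward drift of $\hat{I}$ and $\hat{I}_E$. The one genuine methodological difference is the interior argument: you run a Gronwall bootstrap via the stopping time $\sigma^{(n)}_\eta$, whereas the paper proves tightness of $\hat{\mathbf X}^{(n)}/n$ in $D[0,t_*-\eta]$ through increment bounds like \eqref{ytightness}, passes to a subsequential limit via Skorokhod representation, and identifies the limit with $\hat{\mathbf x}$ using the uniqueness from Theorem \ref{exiuni}. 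Both routes are standard; your Gronwall route is more direct, the paper's tightness route avoids tracking Lipschitz constants explicitly. (A minor imprecision: the martingale's quadratic variation is controlled by the total number of jumps and the moment bounds of Lemma \ref{boundni}, not by whether the rates are $O(n)$; only the drift needs $\hat{I}_E^{(n)}/n$ bounded below.)

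There is, however, a genuine gap in the step ``uniform continuity of $\hat{\mathbf x}$ on $[0,t_*]$ handles the ODE side.'' The right-hand side of \eqref{hati} contains $-\frac{\gamma}{\lambda}\frac{\hat{i}}{\hat{i}_E}$, and $\hat{i}_E(t)\to 0$ as $t\to t_*$, so it is not automatic that $\hat{i}$ admits a finite limit at $t_*$; and for your oscillation bound to close as $\eta\downarrow 0$ you actually need the stronger fact that $\hat{i}(t)\to 0$. This is exactly the content of Lemma \ref{lem:i=0} in the paper, proved as follows: since $\hat{i}'_E$ is bounded, $\hat{i}_E(t)\le C(t_*-t)$; if $\limsup_{t\to t_*}\hat{i}(t)=a>0$ then on a left neighborhood of $t_*$ one has $\hat{i}'(u)\le -\frac{a/2}{C(t_*-u)}+1$, whose integral diverges to $-\infty$, contradicting $\hat{i}\ge 0$. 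You need to supply this argument (or an equivalent) before the oscillation step is complete; as written, the assumption of uniform continuity on the closed interval $[0,t_*]$ begs precisely the question that the singular drift raises.
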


Theorem \ref{limitode} can be used to derive the convergence of the scaled  final epidemic size $\Lambda^{(n)}/n$ under \eqref{cond_cont}.

\begin{theorem}\label{finalsize}
	Assume that \eqref{cond_cont} holds. Let $t_*$ be given by \eqref{deft_*}. 
	\begin{enumerate}[label=(\roman*)]
		\item For positive initial condition:	suppose $ \hat{\mathbf{X}}^{(n)}(0)/n\CP \hat{\mathbf{x}}(0)$ where $\hat{i}(0)>0$ and $\hat{i}_E(0)>0$. Then, 
		\begin{equation}\label{1-st*}
			\frac{\Lambda^{(n)}}{n}\CP 1-\hat{s}(0)+t_* \quad \mbox{as} \quad n\to\infty.
		\end{equation}
		\item For zero initial condition: suppose $\lambda>\lambda_c$, and for each $n$ the epidemic starts from one infected vertex with the rest being susceptible.
		Then conditionally on a major outbreak,    \begin{equation}\label{final1}
			\frac{\Lambda^{(n)}}{n}\CP t_* \quad \mbox{as}\quad  n\to\infty.
		\end{equation}
	\end{enumerate}
\end{theorem}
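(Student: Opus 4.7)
The plan for both parts is to reduce everything to the single convergence $\hat{\tau}^{(n)} \CP t_*$. The key identity is that the random time-change preserves the state trajectory, so $T^{(n)} = n - S^{(n)}(\tau^{(n)}) = n - \hat{S}^{(n)}(\hat{\tau}^{(n)})$. Because $\hat{s}(t) = \hat{s}(0) - t$ by \eqref{ssolve} is continuous, the convergence $\hat{\tau}^{(n)} \CP t_*$ combined with \eqref{eq:hatsw} yields $\hat{S}^{(n)}(\hat{\tau}^{(n)})/n \CP \hat{s}(t_*) = \hat{s}(0) - t_*$, and hence $T^{(n)}/n \CP 1 - \hat{s}(0) + t_*$. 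Under a positive initial condition this is precisely \eqref{1-st*}; under the zero initial condition, where $\hat{s}(0) = 1$, the limit collapses to $t_*$, which is \eqref{final1}.

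For part (a), the lower bound $\hat{\tau}^{(n)} \geq t_* - \delta$ whp is immediate from \eqref{eq:hatx}: for any $\delta > 0$, the constant $c_\delta := \inf_{t \in [0, t_*-\delta]} \hat{i}_E(t)$ is strictly positive by the definition of $t_*$, so $\hat{I}_E^{(n)}(t)/n \geq c_\delta/2$ on $[0, t_*-\delta]$ with probability tending to $1$, forcing $\hat{\tau}^{(n)} \geq t_* - \delta$. The upper bound $\hat{\tau}^{(n)} \leq t_* + \delta$ whp is the main obstacle: Theorem \ref{limitode} gives no direct control past $t_* \wedge \hat{\tau}^{(n)}$, and the ODE is singular at $t_*$. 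My plan is to combine two facts: first, at time $t_* \wedge \hat{\tau}^{(n)}$, \eqref{eq:hatx} forces $\hat{I}_E^{(n)}/n \CP 0$; second, the hypothesis \eqref{cond_cont} is exactly the statement that the right-hand side of \eqref{hati_E} evaluated at $(\hat{s}(t_*), \hat{i}(t_*), 0, \hat{w}(t_*))$ is non-positive, while under \eqref{cond_discont} it would be strictly positive (this is how \cite{BB} obtain the discontinuous phase transition). With non-positive drift, a martingale / first-passage argument --- comparing $\hat{I}_E^{(n)}$ to a random walk with non-positive drift and applying Doob's inequality to the escape probability --- shows that $\hat{I}_E^{(n)}$ is absorbed at $0$ within additional new time at most $\delta$ whp.

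For part (b), the strategy is to reduce to (a) via an intermediate time. Let $\hat{\mathbf{x}}^\circ(\cdot)$ denote the zero-IC solution of Theorem \ref{exiuni} and $t_*^\circ$ its first return of $\hat{i}_E$ to $0$. For a small $\eta > 0$, define $\sigma_\eta^{(n)} := \inf\{t \geq 0 : \hat{S}^{(n)}(t) \leq (1-\eta)n\}$; conditionally on an outbreak, $\sigma_\eta^{(n)} < \hat{\tau}^{(n)}$ whp as soon as $\eta$ is below the outbreak threshold. The technical heart of (b) is to show that, conditionally on an outbreak,
\[
\frac{\hat{\mathbf{X}}^{(n)}(\sigma_\eta^{(n)})}{n} \CP \hat{\mathbf{x}}^\circ(\eta).
\]
I would prove this by coupling the early phase of the epidemic (while $\hat{I}_E^{(n)}$ is microscopic) with a supercritical Galton--Watson process --- supercritical because $\lambda > \lambda_c$ --- so that conditionally on survival the process reaches size of order $n$, after which a law-of-large-numbers / ODE-approximation argument in the spirit of Theorem \ref{limitode} pins down the state at $\sigma_\eta^{(n)}$. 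Since $\hat{\mathbf{x}}^\circ(\eta)$ is a positive initial condition for small $\eta > 0$, applying the strong Markov property at $\sigma_\eta^{(n)}$ together with part (a) (whose corresponding terminal time, by uniqueness of the ODE, equals $t_*^\circ - \eta$) yields $T^{(n)}/n \CP 1 - \hat{s}^\circ(\eta) + (t_*^\circ - \eta) = 1 - (1-\eta) + (t_*^\circ - \eta) = t_*^\circ$, which is \eqref{final1}. Reassuringly, the limit is independent of $\eta$.
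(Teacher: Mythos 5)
The high-level architecture you propose---reduce to $\hat{\tau}^{(n)}\CP t_*$, split into a lower and an upper bound---is exactly the paper's plan, and your lower-bound argument is essentially Corollary~\ref{cor:lbtau}. However, your treatment of the upper bound $\hat\tau^{(n)}\leq t_*+\delta$ has a genuine gap, and the gap sits precisely at the place where \eqref{cond_cont} is actually used.

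You assert that \eqref{cond_cont} ``is exactly the statement that the right-hand side of \eqref{hati_E} evaluated at $(\hat s(t_*),\hat i(t_*),0,\hat w(t_*))$ is non-positive, while under \eqref{cond_discont} it would be strictly positive.'' That is not correct. The quantity in question is $F(t_*)$ in the paper's notation (see \eqref{deff2}), and $F(t_*)\leq 0$ is \emph{automatic}, completely independently of \eqref{cond_cont}: it is simply the statement that $\hat i_E'(t_*)\leq 0$ because $\hat i_E$ decreases to $0$ at its first hitting time (this is the one-line argument \eqref{iet*1}--\eqref{iet*2}). Also, \eqref{cond_cont} is a fixed inequality on $(\omega,\alpha,\gamma,\mu)$, whereas the evaluation of \eqref{hati_E} at $t_*$ depends on $\lambda$ and on the solution trajectory, so the two cannot coincide as you claim. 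What \eqref{cond_cont} actually buys, in the paper's proof, is a strict bound on the \emph{derivative} of the drift: $F'(t)\leq -C_{\ref{f't4}}<0$ (equations \eqref{f't1}--\eqref{f't4}). This is used exclusively to handle the marginal case $F(t_*)=0$.

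This matters because your proposed martingale/first-passage argument (``compare $\hat I_E^{(n)}$ to a random walk with non-positive drift'') is not sufficient in the case $F(t_*)=0$. With drift identically zero and the time-changed dynamics producing $O(n)$ jumps of size $O(1)$ per unit time, a walk started at height of order $\varepsilon n$ takes time of order $\varepsilon^2 n$ to fluctuate down to $0$, not $O(1)$; and the drift of $\hat I_E$ for $t>t_*$ is not even non-positive without further argument, because the term $(\omega\alpha/\lambda)\hat I(u)/n$ in \eqref{hatiEeq} is a strictly positive feedback that can counteract $F(t)$. The paper is forced to do a two-stage iteration (equations \eqref{etbd}--\eqref{iebd5}): first bound $\hat I_E$ using the a priori linear bound on $\hat I$, use that to bound $\int_{t_*}^t\hat I$, and substitute back to extract the decisive quadratic decay $-\tfrac{C_{\ref{f't4}}}{4}(t-t_*)^2$, which requires exactly the strict inequality $F'\leq -C_{\ref{f't4}}<0$ coming from \eqref{cond_cont}. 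Your proposal has no counterpart of this step; the case $F(t_*)<0$ is easy (and your sketch is fine there), but $F(t_*)=0$ is where the theorem's hypothesis does real work and where the proposal would break down.

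For part (b) your reduction via a stopping time $\sigma_\eta^{(n)}$, a supercritical branching-process coupling, and the strong Markov property is a reasonable alternative route, and the observation that the resulting terminal time must equal $t_*^\circ-\eta$ by uniqueness of the ODE is a nice consistency check. The paper instead proves the zero-initial-condition version of Theorem~\ref{limitode} directly (Section~4.3) and then repeats the part~(a) argument verbatim; your approach outsources more to the early-phase coupling but would need a full proof of $\hat{\mathbf X}^{(n)}(\sigma_\eta^{(n)})/n\CP\hat{\mathbf x}^\circ(\eta)$, which is not written here. In any case, fixing the upper-bound argument in part~(a) is the essential missing piece.
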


\begin{remark}
	Note that the right-hand sides of \eqref{1-st*} and \eqref{final1} can be written as $1-\hat{s}(t_*)$. 
	Theorem \ref{finalsize} thus verifies \cite[Conjecture 2.1]{MR4456028} under the condition \eqref{cond_cont}. We conjecture that this condition is not essential, i.e.,  Theorem \ref{finalsize}  holds without assuming \eqref{cond_cont}.  
\end{remark}

Theorem \ref{phase} follows from Theorem \ref{finalsize} via a direct computation. See Section \ref{sec:contphase}. 

We end this section with a comment regarding the SIR-$\omega$ model on random graphs generated from the configuration model. 

\begin{remark}
  	While we believe  that a similar phase transition should occur for the SIR-$\omega$ model on the configuration model,  we do find  some nontrivial barriers in adapting the proof of Theorem \ref{phase} to that case. See also Remark \ref{rem:dif} below. 
\end{remark}

\subsection{Previous work}\label{sec:pw}

\subsubsection{SIR on static graphs}

So far the SIR model on several random graph models (including \ER\  graphs and the configuration model) has been well understood.  For the case of \ER($n$,$\mu/n$) graphs,  let $T$ be a random variable with the  Exponential($\lambda$) distribution, and consider the functions
\begin{equation*}
  	G_0(z)=\mathbb{E}\exp\left(-\mu (1-z)(1-e^{-\lambda T})\right), \quad G_1(z)=\exp\left(-\mu \frac{\lambda}{\lambda+\gamma} (1-z)\right).
\end{equation*}
The following theorem is well known. See, e.g., \cite{MR1993267}.  

\begin{theorem}
	The critical value for SIR model on static \ER($n$,$\mu/n$)  is given by $\lambda_c=\gamma/(\mu-1)$.  For $\lambda>\lambda_c$,
	the probability of a major outbreak converges to   $1-z_0$ where $z_0<1$ is the solution to the equation $G_0(z)=z$, i.e., 
	$$
	\lim_{n\to\infty}\P(\Lambda^{(n)}\geq \ep n) = 1-z_0
	$$
	for all small enough $\ep>0$. 
	In addition, conditionally on a major outbreak,  the scaled final epidemic size $\Lambda^{(n)}/n$  converges to $1-z_1$, where $z_1<1$ is the solution to the equation $G_1(z)=z$. In particular, $z_0\neq z_1$. 
\end{theorem}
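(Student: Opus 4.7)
The plan is to exploit the classical coupling between Markovian SIR on a graph and bond percolation. For each vertex $v$ I would draw an independent recovery duration $D_v\sim\mathrm{Exp}(\gamma)$, and for each unordered pair $\{v,w\}$ an independent $\mathrm{Exp}(\lambda)$ transmission clock $T_{vw}$. Given this random environment, the set of vertices ever infected in the SIR process started at a seed $v_0$ coincides with the connected component of $v_0$ in the \emph{transmission graph}, whose edges are the edges of \ER$(n,\mu/n)$ for which the transmission clock fires before the infectious endpoint recovers. The marginal probability that a given edge is retained is $p:=\lambda/(\lambda+\gamma)$, and in the large-$n$ limit the transmission graph has the same macroscopic component structure as \ER$(n,\mu p/n)$.

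My first step would be to apply the standard giant-component theorem for sparse Erd\H{o}s--R\'enyi graphs to \ER$(n,\mu p/n)$: a giant component of size $(1-z_0)n+o_{\P}(n)$ exists if and only if $\mu p>1$, with $z_0\in [0,1)$ the unique solution of $z=G_1(z)=\exp(-\mu p(1-z))$. This fixed-point equation is exactly the extinction equation for the $\mathrm{Poisson}(\mu p)$ Galton--Watson branching process that arises as the local weak limit. Rewriting $\mu p>1$ as $\lambda>\gamma/(\mu-1)$ immediately yields $\lambda_c=\gamma/(\mu-1)$.

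My second step would translate giant-component statements into epidemic statements. Because only the giant component has linear size, for $\ep>0$ small enough the event $\{T^{(n)}\geq \ep n\}$ coincides (up to a set of vanishing probability) with the event that the seed lies in the giant component of the transmission graph. A uniformly placed seed lies in the giant component with asymptotic probability equal to the giant fraction $1-z_0$, which yields the outbreak-probability claim. Conditional on this event, the infected cluster \emph{is} the giant component, and hence $T^{(n)}/n \CP 1-z_0$.

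The main obstacle is to justify rigorously that the transmission graph, whose edges incident to a common vertex are positively correlated through the shared $D_v$, nonetheless has the same limiting giant-component size as i.i.d.\ bond percolation on \ER$(n,\mu/n)$ with retention probability $p$. This is exactly the content of Neal's argument in \cite{Neal}: via a direct generating-function computation together with a concentration estimate, he shows that the per-vertex correlations are invisible at the mean-field scale, so that the fixed-point equation $z=\exp(-\mu p(1-z))$ correctly governs both the outbreak probability and the limiting scaled epidemic size.
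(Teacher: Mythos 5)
The paper does not prove this theorem; it merely states it as background and cites Neal, so there is no in-paper argument to compare your proposal against. What follows is an assessment of the proposal on its own terms.

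Your percolation-coupling route is the standard one and correctly delivers the threshold $\lambda_c=\gamma/(\mu-1)$ and the conditional final-size limit $1-z_0$. The genuine gap is in the outbreak-probability step. The transmission graph is \emph{directed}, not an undirected bond percolation: the edge $\{v,w\}$ is usable in the direction $v\to w$ iff $T_{vw}<D_v$, but usable in the direction $w\to v$ iff $T_{vw}<D_w$, and these two events are neither equal nor exchangeable. Consequently the \emph{forward} branching process, which governs whether the out-cluster of the single seed becomes macroscopic (i.e.\ the outbreak probability), and the \emph{backward} branching process, which governs whether a typical vertex lies in the eventually infected set (i.e.\ the final-size fraction), have different offspring laws. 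Exploring backward from a vertex $w$, the events ``$v$ would have infected $w$'' over distinct potential infectors $v$ involve disjoint recovery clocks $D_v$, hence are i.i.d.\ Bernoulli($p$); the backward offspring law is exactly $\mathrm{Poisson}(\mu p)$ with PGF $G_1$, which yields the $1-z_0$ final size. Exploring forward from an infected $v$, all of $v$'s out-edges share the same $D_v$, so the forward offspring law is the mixed Poisson $\mathrm{Poisson}(\mu(1-e^{-\lambda D}))$ with PGF $G_f(z)=\E\bigl[\exp(-\mu(1-e^{-\lambda D})(1-z))\bigr]$. By Jensen, $G_f\geq G_1$ on $[0,1]$ with strict inequality on $[0,1)$, so the forward extinction probability $q_f$ strictly exceeds $z_0$ whenever $\lambda>\lambda_c$. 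Your step ``a uniformly placed seed lies in the giant component with probability equal to the giant fraction, hence outbreak probability $=1-z_0$'' is valid for undirected percolation (equivalently, for Reed--Frost type SIR with nonrandom infectious period), but not for Markovian SIR: the correct outbreak probability is $1-q_f<1-z_0$. Describing the difficulty as ``edges incident to a common vertex are positively correlated'' misses that the real issue is directional asymmetry, and that the two quantities in the theorem are governed by two \emph{different} fixed-point equations; a rigorous proof must treat the forward and backward processes separately. (The critical value is unaffected, since both processes have mean offspring $\mu p$.) Note also that the theorem as transcribed in the paper appears to conflate the two fixed points in exactly the same way.
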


It follows from this theorem that the phase transition for final epidemic size of the  SIR model on static \ER\  graphs is continuous, since we have $1-z_1\to 0$ as $\lambda\to \lambda_c$.

Similar results hold for the configuration model. Take any degree sequence $(D_1,\ldots, D_n)$, denoted by $\mathbf{D}_n$, the configuration model $ \CM(n,\mathbf{D}_n)$ is constructed by uniformly randomly pairing $D_1,\ldots,D_n$ half-edges attached to vertices $1,\ldots, n$, respectively. Two paired half-edges form a single edge.  If $D_i, 1\leq i\leq n$, are i.i.d. sampled from a distribution $D$ (with minor adjustment to make sure that the sum $\sum_{i=1}^n D_i$ is even), then we may write $\CM(n,D)$ for $\CM(n,\mathbf{D}_n)$.  It is known that, when $\E(D^2)<\infty$, the local neighborhood of $\CM(n,D)$ resembles a two-stage Galton-Watson tree where the offspring distribution of the root is $D$ while the offspring distribution of later generations is  $D^*-1$ ($D^*$ is the size-biased version of $D$). See, e.g., \cite[Chapter 7]{MR3617364} for a detailed introduction to the configuration model.  Then for the SIR model on $\CM(n,D)$, $\lambda_c=\gamma m_1/(m_2-2m_1)$ where $m_i$ is the $i$-th moment of $D$. In addition,  both the probability of a major outbreak and the scaled final epidemic size (conditionally on a major outbreak) converge to a deterministic quantity which  goes to 0 as $\lambda \searrow\lambda_c$. See, e.g., \cite{MR3275704} for statements and proofs of these results. 

\subsubsection{SI on evolving graphs}

As mentioned before, in SI type models infected vertices will not recover. 

For the  SI-$\omega$ epidemics on \ER($n,\mu/n$) graphs, \cite{MR4456028} found out the limit of $\Lambda^{(n)}/n$ (which can be expressed as the solution to an explicit equation) and gave a necessary and sufficient condition for the phase transition to be discontinuous. See \cite[Theorem 2.5 and Theorem 2.6]{MR4456028}. More precisely, they showed that the phase transition for the fraction of infected is discontinuous if and only if $\alpha>1/3$ and $\mu>3\alpha/(3\alpha-1)$. 

When $\alpha=1$, the SI-$\omega$ model is also called the evoSI model. \cite{MR4474532} analyzed the phase transition of final epidemic size in the  evoSI model with underlying graph $\CM(n,D)$. Let $\mu_i$ be the $i$-th factorial moment of $D$, i.e., 
\begin{equation*}
  	\mu_i=\E[D (D-1)\cdots (D-i+1)].
\end{equation*}
Set
\begin{equation}
  	\Delta= -\frac{\mu_3}{\mu_1} + 3 (\mu_2-\mu_1).\label{deldef}
\end{equation}
They showed that the phase transition is continuous if $\Delta<0$ and discontinuous if $\Delta>0$. Note that when $D\sim \mathrm{Poisson}(\mu)$ and $\alpha=1$,  $\Delta=-\mu^2+3(\mu^2-\mu)$. Thus, the condition $\Delta>0$ corresponds to $\mu>3/2$, which is the condition given by Ball and Britton by setting $\alpha=1$.

\subsubsection{SIR on evolving graphs}\label{sec:bb}

\cite{DOMath} studied the evoSIR model (SIR-$\omega$ with $\alpha=1$) on \ER($n$,$\mu/n$) graphs. They showed in \cite[Theorem 1]{DOMath} that the critical infection rate for evoSIR model is give by 
\begin{equation}\label{lcevosir}
	\lambda_c=\frac{\gamma+\omega}{\mu-1}.
\end{equation}
In addition, the probability of a major outbreak converges to  $1-z''_0$ where $z''_0<1$ is the solution to the equation $G_2(z)=z$ with $G_2$ defined  by
\begin{equation*}
  	G_2(z)=\exp\left(-\mu \frac{\lambda}{\lambda+\omega+\gamma} (1-z)\right).
\end{equation*}
Consequently, as $\lambda$ approaches $\lambda_c$, the probability of a major outbreak also converges to 0.
The authors there also used numerical experiments to discover  that for certain parameters the phase transition for final epidemic size is discontinuous. See Figure \ref{fig:ctvaryrho}.
\begin{figure}[h!]
	\centering
	\includegraphics[width=0.6\textwidth]{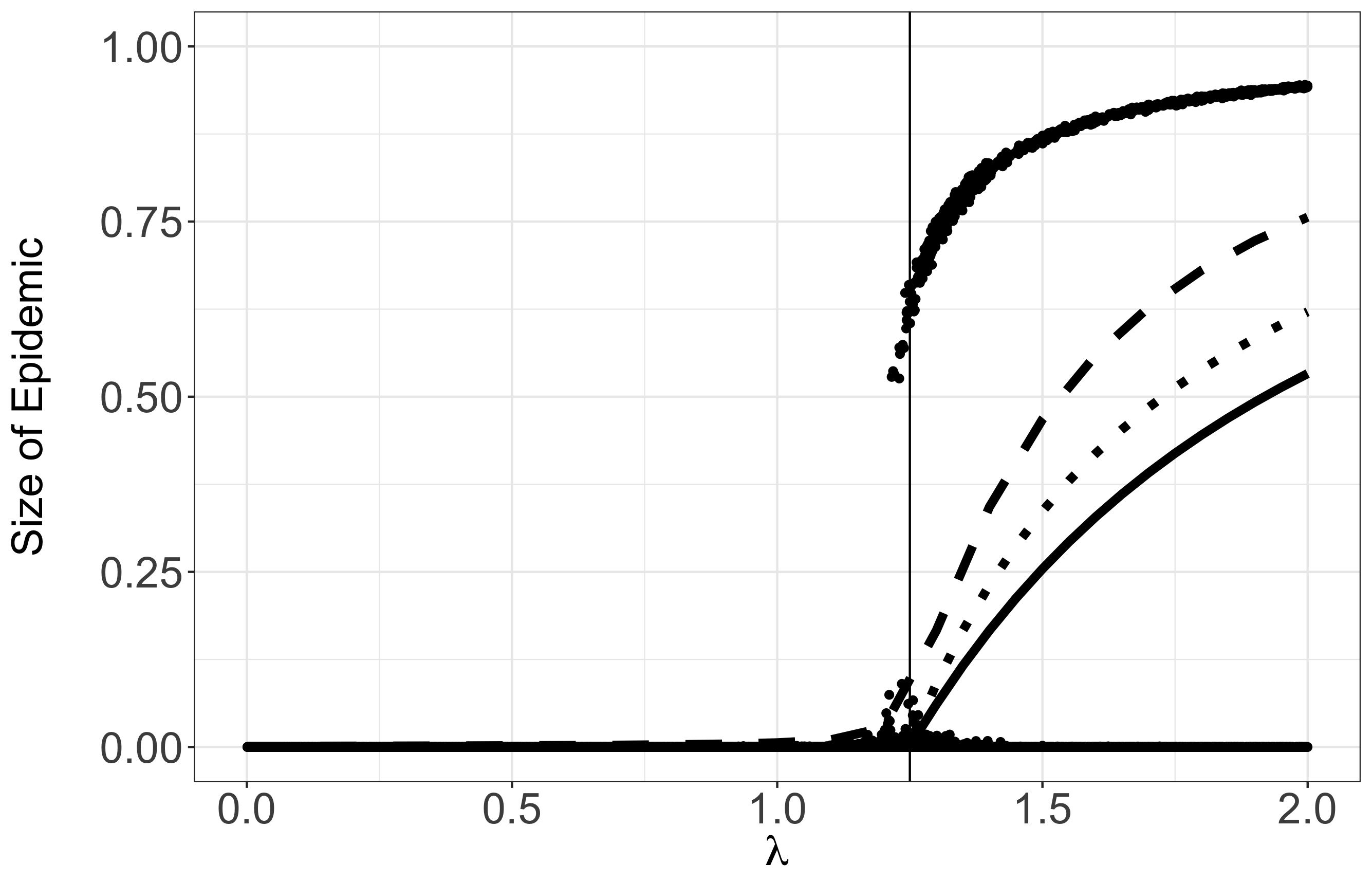}
	\caption{Simulation of the final epidemic size of the evoSIR model on an Erd\H os-R\'enyi graph with $\mu=5$, $\omega=4$ and $n\in [10^4,10^5]$. In this case $\lambda_c=1.25$. The top curve is the simulation of final size of evoSIR. The bottom curve is the final size of the delSIR (SIR-$\omega$ with $\alpha=0$) epidemic with the same parameters. Other two curves are approximations of evoSIR,  which are discussed in \cite{DOMath}.}\label{fig:ctvaryrho}
\end{figure}

SIR-$\omega$ model is introduced the  in \cite{MR3571300}. \cite{MR4456028} analyzed the SIR-$\omega$ model on an \ER($n$,$\mu/n$) graph using a novel construction that couples the graph and the epidemic. We will explain the construction in  Section \ref{s:construct}. Using this construction, they proved the convergence of $\mathbf{X}^{(n)}(t)/n$  to a system of ODEs over any finite time horizon. Specifically, let $\mathbf{x}(t)=(s(t),i(t),i_E(t),w(t))$ be the solution of the following system of equations:
\begin{align}
	\frac{\mathrm{d}s}{\mathrm{d}t}&=-\lambda i_{E}, \label{0s}\\
	\frac{\mathrm{d}i}{\mathrm{d}t}&=-\gamma i+\lambda i_E, \label{oi}\\
	\frac{\mathrm{d} i_E}{\mathrm{d}t}&=-\lambda i_E -\gamma i_E+\lambda \mu i_E s -\lambda \frac{i_E^2}{s}+2\lambda i_E \frac{w}{s}-\omega i_E (1-\alpha +\alpha (1-i)), \label{oie}\\
	\frac{\mathrm{d}w}{\mathrm{d}t}&=\omega \alpha i_E s-2\lambda i_E \frac{w}{s} \label{0w}.
\end{align}
Ball and Britton proved the following result in \cite[Theorem 2.2]{MR4456028}. 

\begin{theorem}
	Suppose $  {\mathbf X}^{(n)}(0)/n\CP \mathbf{x}(0)$ where $i(0)>0$ and $i_E(0)>0$. Then, for any $t_0>0$, 
	\begin{equation*}
		\sup_{0\leq t\leq t_0} \norm{ \frac{{\mathbf{X}}^{(n)}(t)}{n} -\mathbf{x}(t)}\CP 0 \mbox{ as }n\to\infty. 
	\end{equation*}
\end{theorem}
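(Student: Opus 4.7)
The plan is to establish this fluid limit by the standard Doob decomposition/Gronwall route underlying Kurtz's density-dependent jump process theorem. First I would use the Ball--Britton edge-revealing coupling (introduced in Section \ref{s:construct}) to express $\mathbf{X}^{(n)}(t)$ as its initial value plus jumps driven by four independent unit-rate Poisson processes, one per event type: infections (intensity $\lambda I_E^{(n)}$), recoveries (intensity $\gamma I^{(n)}$), edge drops (intensity $\omega(1-\alpha) I_E^{(n)}$) and rewirings (intensity $\omega \alpha I_E^{(n)}$). For each event the change in $I_E$ involves newly revealed susceptible neighbours of the affected vertex and the rewired $S$--$S$ edges incident to it; elementary binomial/hypergeometric identities on the unmatched half-edges give the conditional expected increment, and summing over event types one checks that the infinitesimal drift of $\mathbf{X}^{(n)}/n$ equals $F(\mathbf{X}^{(n)}/n)+O(1/n)$, where $F$ is the vector field on the right-hand side of \eqref{0s}--\eqref{0w}.

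Second, I would Doob-decompose
\[
\frac{\mathbf{X}^{(n)}(t)}{n}=\frac{\mathbf{X}^{(n)}(0)}{n}+\int_0^t F\!\left(\frac{\mathbf{X}^{(n)}(u)}{n}\right)du+\frac{M^{(n)}(t)}{n}+\varepsilon^{(n)}(t),
\]
where $M^{(n)}$ is a locally square-integrable martingale and $\varepsilon^{(n)}=O(1/n)$ uniformly on $[0,t_0]$. Since jump sizes are $O(1)$ and the cumulative event intensity is $O(n)$ over $[0,t_0]$, the predictable quadratic variation of $M^{(n)}$ is $O(n)$, so Doob's $L^2$ maximal inequality yields $\sup_{t\le t_0}\norm{M^{(n)}(t)/n}\CP 0$. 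A Gronwall comparison against the integral equation $\mathbf{x}(t)=\mathbf{x}(0)+\int_0^t F(\mathbf{x}(u))\,du$ then gives the conclusion, provided $F$ is Lipschitz on the domain traversed.

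The main obstacle is that $F$ is only locally Lipschitz: the terms $i_E^2/s$ and $2\lambda i_E w/s$ appearing in \eqref{oie}--\eqref{0w} have Lipschitz constants that blow up as $s\downarrow 0$. To localize I would fix $\delta>0$ and introduce the stopping time
\[
\tau_\delta^{(n)}:=\inf\bigl\{t\ge 0:\ S^{(n)}(t)/n\le \delta\ \text{ or }\ (I^{(n)}+I_E^{(n)}+W^{(n)})(t)/n\ge 1/\delta\bigr\}.
\]
On $[0,t_0\wedge \tau_\delta^{(n)}]$ the vector field is Lipschitz with a $\delta$-dependent constant, so the argument above produces convergence there. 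A direct a priori analysis of \eqref{0s}--\eqref{0w} shows that $\mathbf{x}(t)$ remains inside the open region $\{s>\delta,\ i+i_E+w<1/\delta\}$ for every $t\le t_0$ once $\delta=\delta(t_0,\mathbf{x}(0),\mu,\lambda,\gamma,\omega,\alpha)$ is chosen small enough---this uses monotonicity of $s$, a linear Gronwall estimate on \eqref{oi}--\eqref{0w} to control $i,i_E,w$, and positivity of the final susceptible fraction. Consequently $\P(\tau_\delta^{(n)}<t_0)\to 0$, and sending $n\to\infty$ and then $\delta\downarrow 0$ removes the localization and yields the claim.
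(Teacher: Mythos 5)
This statement is quoted by the paper from Ball and Britton \cite[Theorem 2.2]{BB}; the paper itself gives no proof of it, so there is no in-paper argument to compare against line by line. What the paper \emph{does} prove is the analogous result after the random time transform, Theorem \ref{limitode}, and its proof (Section \ref{sec:conv}) uses the same Ball--Britton construction and the same Dynkin/martingale decomposition, but closes the argument with a tightness-plus-uniqueness-of-sequential-limits argument rather than Gronwall. That choice is forced there because the time-changed vector field contains $\hat{i}/\hat{i}_E$, which is not Lipschitz near $\hat{i}_E=0$, whereas in the present (untransformed) statement the only singularity is at $s=0$, so your Gronwall-plus-localization route is available and is the more elementary one. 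Overall your proposal follows the standard Kurtz density-dependent fluid-limit template and is the right strategy for this theorem.

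Two points deserve a correction or more care. First, the jumps of $I_E^{(n)}$ and $W^{(n)}$ at an infection event are \emph{not} $O(1)$: a newly infected vertex receives $\mathrm{Poisson}(\mu S^{(n)}/n)+\mathrm{Binomial}(W^{(n)},2/S^{(n)})$ new infectious edges and loses an independent thinning of the current infectious edges, so the jump size is an unbounded random variable. What saves the quadratic-variation estimate is exactly Lemma \ref{boundni} of the paper: these increments have uniformly bounded fourth moments, so the predictable quadratic variation is still $O(n)$ and the $L^2$ maximal inequality applies. You should replace ``jump sizes are $O(1)$'' by ``jump sizes have uniformly bounded second (indeed fourth) moments,'' since the unbounded-jump case is not literally covered by the textbook Kurtz theorem and this is where Lemma \ref{boundni} earns its keep. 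Second, the a priori claim that the ODE trajectory stays in $\{s>\delta\}$ for all $t\le t_0$ needs a short justification rather than an appeal to ``positivity of the final susceptible fraction.'' A clean way, consistent with the paper's machinery, is to observe that the time change maps $[0,\infty)$ in physical time into $[0,t_*)$ in transformed time with $\hat{s}(t_*)=\hat{s}(0)-t_*>0$ (the paper proves $t_*<\hat{s}(0)$ at the start of Section \ref{sec:exiuni}), so $s(t)\ge \hat{s}(0)-t_*>0$ uniformly in $t$; alternatively one can argue directly from the $-\lambda i_E^2/s$ term that $i_E/s$ cannot blow up. With these two repairs the argument is sound.
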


Note that this theorem doesn't tell us what happens when the time is $\infty$, thus we cannot use it to deduce the final epidemic size. Ball and Britton  conjectured \cite[Conjecture 2.1]{MR4456028}  that the limit of $n\to\infty$ and $t\to\infty$  can be interchanged so that $\Lambda^{(n)}/n \to 1-s(\infty)$. Our Theorem \ref{finalsize} confirms this conjecture under the condition \eqref{cond_cont}. 

By constructing models that are upper and lower bounds for the SIR-$\omega$ model, Ball and Britton proved in \cite[Theorem 2.4]{MR4456028} that the final epidemic size has a discontinuous phase transition under \eqref{cond_discont} and a continuous transition under \eqref{bbcond_cont}. These two conditions  have a gap.  We will close this gap by directly analyzing the SIR-$\omega$ model itself. 

\subsection{Proof strategies and organization of the paper}\label{sec:ps}

Our proof relies on the construction of the SIR-$\omega$ model given in \cite{MR4456028}. We will give the details in the next section. Using that construction we can derive evolution equations for quantities like $S^{(n)}(t)$. As pointed out in \cite{MR4456028}, a common approach to analyze epidemic models is performing a suitable time-transform so that the system becomes more solvable. In the case of the SIR-$\omega$ model, after the time-change, the limiting system of ODEs change from \eqref{0s}-\eqref{0w} to \eqref{hats}-\eqref{hatw}.   Note that the equation for $\hat{i}'$ involves the term $\hat{i}/\hat{i}_E$, which is not Lipschitz for $\hat{i}_E$ near 0. Thus, one cannot directly apply classical results (e.g., \cite{MR2395153}) to obtain the  convergence of the terminal time $\hat{\tau}^{(n)}$ and the fraction of infected vertices $\Lambda^{(n)}/n$. 

We now give a quick overview of the proof of \eqref{eq:hatx} in the case of positive initial condition to illustrate the idea to overcome this barrier. Fix any $\ep>0$, we  can divide the interval $[0,t_*]$ into $[0,t_*-\ep]$ and $[t_*-\ep, t_*]$. We note that $\hat{i}_E$ is bounded away from 0 in $[0,t_*-\ep]$. Consequently, standard arguments (tightness plus the uniqueness of the limit) can be used to derive \eqref{eq:hatx} for $t\leq t_*-\ep$. To deal with the other time interval $[t_*-\ep, t_*]$, we need two observations:
\begin{itemize}
	\item $\hat{i}(t_*-\ep)$ and $\hat{i}_E(t_*-\ep)$ can be arbitrarily small by taking $\ep$ to be small. It follows that $\hat{I}^{(n)}(t_*-\ep)/n$ and $\hat{I}^{(n)}_E(t_*-\ep)/n$ must also be small with high probability, by the convergence in the interval $[0,t_*-\ep]$ that we just proved. 
	\item The evolution equations for $\hat{I}^{(n)}$  and $\hat{I}^{(n)}_E$, \eqref{hatieq} and \eqref{hatiEeq}, imply that the upward drift for $\hat{I}^{(n)}$ and $\hat{I}^{(n)}_E$ is not large. 
\end{itemize}
Combining these two observations, we conclude that $\hat{I}^{(n)}/n$ and $\hat{I}^{(n)}_E/n$ remain small in the time interval $[t_*-\ep, t_*]$. Since $\hat{i}$ and $\hat{i}_E$ are also small in this interval,  we finish the proof of \eqref{eq:hatx}. 

In order to obtain the convergence of $\hat{\tau}^{(n)}$ and prove Theorem \ref{finalsize},  it is desirable to know whether the derivative of $\hat{i}_E$ is negative or 0 at $t_*$. However, this seems not trivial. To overcome this issue, we just consider the two cases $i'_E(t_*)<0$ and $i'_E(t_*)=0$ separately. The first case is relatively easy. In the second case we need more careful estimates and this is where we need the condition \eqref{cond_cont}. 

To prove Theorem \ref{phase} we also need to separate the condition \eqref{cond_cont} into two sub-cases according to whether the equality in \eqref{cond_cont} holds or not.

The rest of the paper is organized as follows.  An alternative construction of the SIR-$\omega$ model and several consequences of this construction  are given in Section \ref{s:construct}. We prove the existence and uniqueness of the solution to the system of ODEs \eqref{hats}-\eqref{hatw} (Theorem \ref{exiuni}) in Section \ref{sec:exiuni}. The convergence of the time-changed SIR-$\omega$ model to differential equations (Theorem \ref{limitode}) is proved in Section \ref{sec:conv}. We then use Theorem \ref{limitode} to prove the limit of the scaled final epidemic size  under the condition \eqref{cond_cont}  (Theorem \ref{finalsize}) in Section \ref{sec:finalsize}. Finally, in Section \ref{sec:contphase}, we use Theorem \ref{finalsize} to prove the continuity of the phase transition under the condition \eqref{cond_cont} (Theorem \ref{phase}).

\section{Construction of the SIR-$\omega$ process}\label{s:construct}

The key to our proof is the following construction for the SIR-$\omega$ model on \ER\  graphs, proposed in \cite{MR4456028}. Initially, there are $I^{(n)}(0)$ infected vertices and $I_E^{(n)}(0)$ edges attached to these vertices.   For each of these edges  it has a `free' endpoint which is undetermined at this stage. No edges are attached to other vertices. There are three types of Poisson process occurring on vertices or edges. (When there is an arrival in the Poisson process, we also say that Poisson clock rings.) Below we use the term \emph{infected edges} to denote the edges that are connected to infected vertices and have a free endpoint.  We also let $\mathcal{W}^{(n)}(t)$ be the pool of rewired susceptible edges, i.e., edges that have been rewired at least once and are  susceptible at time $t$. Note that if an edge in the pool $\mathcal{W}^{(n)}(t-)$  becomes an infected edge at time $t$, then it will exit $\mathcal{W}^{(n)}(t)$, though it may joint the pool again at some later time.
\begin{itemize}
	\item \textbf{Recovery Poisson processes for each infected vertex with rate $\gamma$}. Each infected vertex recovers with rate $\gamma$. When the recovery clock rings, the infected vertex becomes recovered and all edges attached to this infected vertex are no longer infected.
	\item \textbf{Rewiring/dropping Poisson processes for each infected edge with rate $\omega$}. Each infected edge has an independent Poisson process  with rate $\omega$ on it. Whenever the clock rings, this edge is dropped with probability $1-\alpha$ and rewired to a uniformly chosen vertex excluding itself with probability $\alpha$. Suppose at time $t$ such a clock rings on an edge $e$ attached to vertex $x$ and that edge decides to be rewired. Then  vertex  $x$ loses $e$ at time $t$. 
	\begin{itemize}
		\item If $e$ is rewired to an infected vertex $y$, then the number of infected edges of $y$ increases by 1. This case occurs with probability $(I(t)-1)/(n-1)$.
		\item  If $e$ is rewired to a susceptible vertex (which we don't specify at this moment), then  $e$ is added to $\mathcal{W}^{(n)}(t)$  and its cardinality $W^{(n)}(t)=W^{(n)}(t-)+1$. 
		\item If $e$ is rewired to a recovered vertex, then the effect is similar to dropping.
	\end{itemize}
	\item \textbf{Infection Poisson processes for each infected edge with rate $\lambda$}. Each infected edge is connected to a (uniformly chosen) susceptible vertex with rate $\lambda$. Suppose infected edge $e$ is connected to susceptible vertex $y$. Then $y$ immediately becomes infected. We now attach a random number of edges to $y$ (which become infected edges immediately) given by 
	\begin{equation}\label{infpoi}
		\mbox{Poisson}\left(\frac{S^{(n)}(t) }{n}\mu\right)+\mbox{Binomial}\left(W^{(n)}(t-),\frac{2}{S^{(n)}(t-) } \right).
	\end{equation}
We also delete all infected edges (other than those newly added to $y$) with probability $1/S^{(n)}(t-)$, independently. 
\end{itemize}

To explain the construction, note that we have required infected edges to be paired to susceptible vertices only. The definition of \ER\  graphs implies that  the number of  edges (excluding rewired edges) between $y$ and  susceptible vertices at time $t$ is close in distribution to Poisson($S^{(n)}(t)\mu/n$). On the other hand, the number of rewired  susceptible edges in the pool $\mathcal{W}^{(n)}(t-)$ that have an endpoint as  $y$  has a $\mbox{Binomial}(W^{(n)}(t-),2/S^{(n)}(t-)  )$ distribution. The reason of deleting infected edges with probability $1/S^{(n)}(t-)$ is to remove all infected edges that connect to  $y$ (so that all of the remaining infected edges connect to susceptible vertices). 

It has been proved in \cite[Section 2.3]{MR4456028} that the process just constructed (which we temporarily call the coupled SIR-$\omega$ model) has the same law as the original SIR-$\omega$ model on \ER($n$,$\mu/n$) graphs, after conditioning on the event
\begin{equation}\label{simple}
	\begin{split}
		\Omega_{\ref{simple}}:=\{& \mbox{The graph induced from the construction} \\
		& \mbox{of the couple SIR-$\omega$ model is a simple graph}\}.
	\end{split}
\end{equation}
In addition,  Ball and Britton showed that
\begin{equation*}
	\liminf_{n\to\infty}\P(\Omega_{\ref{simple}})>0.
\end{equation*}
Set
\begin{equation}\label{simple0}
	\Omega_{\ref{simple0}}:=\{\mbox{A major outbreak occurs in the coupled SIR-$\omega$ model}\}.
\end{equation}
Using the techniques in the proof of \cite[Theorems 2.7 and 2.10]{MR3275704}, we can show that
\begin{equation}\label{simple2}
	\liminf_{n\to\infty}\P(\Omega_{\ref{simple0}}\cap \Omega_{\ref{simple}})>0.
\end{equation}
Equation \eqref{simple2} implies that it suffices to prove Theorem \ref{phase} for the coupled SIR-$\omega$ model. To see this, we use a subscript `old'  to denote the original SIR-$\omega$ model and `new' to denote the coupled SIR-$\omega$ model. Suppose \eqref{cont} holds for the coupled model, then for $\lambda<\lambda_1$, 
\begin{equation}\label{simple3}
	\lim_{n\to\infty} \P(\Lambda^{(n)}_{\textrm{new}}>\ep n )=0.
\end{equation}
By \eqref{simple2} and \eqref{simple3},
\begin{equation*}
	\limsup_{n\to\infty} \P(\Lambda^{(n)}_{\textrm{new}}>\ep n | \Omega_{\ref{simple0}}\cap \Omega_{\ref{simple}} )\leq 
	\limsup_{n\to\infty} \frac{\P(\Lambda^{(n)}_{\textrm{new}}>\ep n )}{\P(\Omega_{\ref{simple0}}\cap \Omega_{\ref{simple}} )}=0.  
\end{equation*}
Since the original model and the coupled model are equivalent upon conditioning on $\Omega_{\ref{simple}}$, 
\begin{equation*}
	\lim_{n\to\infty} \P(\Lambda^{(n)}_{\textrm{old}}>\ep n | \Omega_{\ref{simple0}})=
	\lim_{n\to\infty} \P(\Lambda^{(n)}_{\textrm{new}}>\ep n | \Omega_{\ref{simple0}}\cap \Omega_{\ref{simple}} )=0.
\end{equation*}
This proves Theorem \ref{phase} for the original model. Hence, in this paper we will not distinguish between the original SIR-$\omega$ model and the coupled SIR-$\omega$ model. 

\begin{remark}\label{rem:dif}
  	his coupling construction of SIR-$\omega$ model is specific to \ER \  random graphs. The extension to the configuration model seems to be a difficult problem. 
\end{remark} 

In the rest of this section we will state several implications of the construction just given, which will be used in later sections. In the proofs of Lemmas \ref{l:numberedges} - \ref{boundni} below, it will be more convenient to work with the process in its original time scale. 

\begin{lemma}\label{l:numberedges}
	Let $A_e$ be the total number of edges appearing in the SIR-$\omega$ process. Then exists a constant $C>0$ such that
	\begin{equation*}
		\P(A_e\geq 2\mu n)\leq \frac{C}{n}.
	\end{equation*}
\end{lemma}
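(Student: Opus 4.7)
The plan is to dominate $A_e$ by the edge count of the underlying \ER$(n,\mu/n)$ graph and then apply Chebyshev's inequality. Inspecting the construction in Section \ref{s:construct}, edges enter the process in only two ways: as one of the $I_E(0)$ edges initially adjacent to an initially infected vertex, or as the $\mathrm{Poisson}(S^{(n)}(t_k)\mu/n)$ freshly revealed edges between a newly infected vertex $y_k$ and the currently susceptible vertices at its infection time $t_k$. The $\mathrm{Binomial}(W^{(n)}(t_k{-}),2/S^{(n)}(t_k{-}))$ contribution at an infection event consists of edges already created by a previous rewiring, and rewiring, dropping, and the thinning step only relabel or remove existing edges. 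By the lazy-revelation identity for the construction given in \cite[Section 2.3]{BB}, the process therefore couples with a single \ER$(n,\mu/n)$ graph $G$ such that every edge ever appearing in the process corresponds, under the coupling, to a distinct edge of $G$. In particular,
\[
A_e\;\preceq\;B,\qquad B\sim\mathrm{Binomial}\!\left(\tbinom{n}{2},\mu/n\right).
\]

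A routine Chebyshev estimate closes the proof: $\E B=(n-1)\mu/2\leq\mu n/2$ and $\mathrm{Var}(B)\leq\mu n/2$, so for $n$ large enough that $2\mu n-\E B\geq 3\mu n/2$,
\[
\P(A_e\geq 2\mu n)\;\leq\;\P\bigl(B-\E B\geq 3\mu n/2\bigr)\;\leq\;\frac{\mathrm{Var}(B)}{(3\mu n/2)^2}\;\leq\;\frac{2}{9\mu n}.
\]
Thus the lemma holds with, e.g., $C:=2/(9\mu)$, enlarged if necessary to absorb the finitely many small values of $n$.

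The only mildly delicate point is the stochastic domination $A_e\preceq B$, which reduces to the bookkeeping verification that each underlying \ER\ edge is revealed at most once during the lazy-revelation construction of \cite{BB}. I do not anticipate any real obstacle beyond this bookkeeping; everything else is a standard second-moment concentration bound.
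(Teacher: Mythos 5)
The stochastic domination $A_e\preceq\mathrm{Binomial}\bigl(\binom{n}{2},\mu/n\bigr)$ is not valid for the process the paper actually analyzes, and this is where your argument breaks. After the discussion surrounding \eqref{simple}--\eqref{simple2}, the lemma is about the \emph{coupled} SIR-$\omega$ model constructed via Poisson processes, which only agrees in law with the original SIR-$\omega$ model on an \ER\ graph after conditioning on $\Omega_{\ref{simple}}$. In that construction the freshly revealed edges at an infection event are $\mathrm{Poisson}(S^{(n)}(t)\mu/n)$-distributed, and $\mathrm{Poisson}(Sp)$ is \emph{not} stochastically dominated by $\mathrm{Binomial}(S,p)$ (for instance $\P(\mathrm{Poisson}(Sp)>S)>0$ while $\P(\mathrm{Binomial}(S,p)>S)=0$). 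Consequently the coupled process is not realizable as a subgraph of a single $\ER(n,\mu/n)$ graph; indeed $A_e>\binom{n}{2}$ has positive probability for the coupled model, which immediately contradicts your claimed domination. The possible overshoot is exactly what makes the constructed graph non-simple, which is why the equivalence with the original model requires conditioning.

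Your identification of where edges enter the process, and your Chebyshev computation, are both fine, and you would get the right bound if you worked from the Poisson structure directly: bound $A_e$ above by $I_E(0)+\sum_{i=1}^{n-I(0)}F_i$ with $F_i$ i.i.d.\ $\mathrm{Poisson}(\mu)$, since $\mathrm{Poisson}(S^{(n)}(t)\mu/n)\preceq\mathrm{Poisson}(\mu)$, and separately dominate $I_E(0)$ by a Binomial. That is essentially the paper's proof. So the gap is confined to the coupling step: you need to replace the phantom coupling with a single \ER\ graph by a direct second-moment bound on the sum of Poissons (plus the initial Binomial piece for $I_E(0)$).
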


\begin{proof}
  	From the construction of the SIR-$\omega$ model,  $A_e$ is bounded from above the sum of two random variables 
	\begin{equation*}
    	I_E^{(n)}(0)+\sum_{i=1}^{n-I^{(n)}(0)} F_i  \mbox{ where } F_i \mbox{ i.i.d. } \sim \mbox{Poisson}(\mu).
  	\end{equation*}
	Clearly $I_E^{(n)}(0)$ is bounded from above by the sum of $I^{(n)}(0)n$ independent Bernoulli($\mu/n$) variables. Let $U_1$ have the distribution  Binomial($n^2$, $\mu/n$) and $U_2$ the distribution Binomial($n^2/2$, $\mu/n$).
	For the case $I^{(n)}(0)>n/2$,
	\begin{equation*}
    	A_e \mbox{ is stochastically dominated by } U_1+\sum_{i=1}^{n/2} F_i.
  	\end{equation*}
	For the case $I^{(n)}(0)\leq n/2$,
	\begin{equation*}
    	A_e \mbox{ is stochastically dominated by } U_2+\sum_{i=1}^{n} F_i.
  	\end{equation*}
	
	We have
	\begin{equation*}
		\E(U_1)=\mu n, \quad \textrm{Var}(U_1)\leq  \mu n.
	\end{equation*}
	Thus,
	\begin{equation}\label{number-1}
		\P(U_1\geq 5\mu n/4)=\P(U_1-\E(U_1)\geq \mu n/4)\leq \frac{\textrm{Var}(U_1)}{(\mu n/4)^2}\leq \frac{C}{n}.
	\end{equation}
	We also have
	\begin{equation}\label{number-2}
		\P\left(\sum_{i=1}^{n/2}F_i\geq 3 n\mu/4 \right)   =     \P\left(\sum_{i=1}^{n/2} (F_i-\mu) \geq n\mu/4 \right)\leq \frac{\mathrm{Var}(Y_1)n/2}{( n\mu/4)^2}
		\leq \frac{C}{n}.
	\end{equation}
	Combining \eqref{number-1} and \eqref{number-2},
	\begin{equation*}
		\begin{split}
			\P\left(A_e \geq 2\mu n, I^{(n)}(0)\geq n/2  \right)&\leq 
			\P\left( U_1+\sum_{i=1}^{n/2} F_i  \geq 2\mu n\right)\\
			&\leq     \P(U_1\geq 5\mu n/4)+  \P\left(\sum_{i=1}^{n/2}F_i\geq 3 n\mu/4 \right)   \\
			&\leq \frac{C}{n}+\frac{C}{n}\leq \frac{2C}{n}.
		\end{split}
	\end{equation*}
	
	This proves Lemma \ref{l:numberedges} for the case $I^{(n)}(0)\geq n/2$. The case $I^{(n)}(0)\leq n/2$ can be proved similarly. Combining these two cases we deduce Lemma \ref{l:numberedges}.
\end{proof}

By Lemma \ref{l:numberedges}, $\P(A_e\geq 2\mu n)\to 0$. From now on  we work on the event $\{A_e\leq 2\mu n\}$.

\begin{lemma}\label{bound of edge}
  	The number of times that any given edge is rewired is stochastically dominated by Geometric($\alpha \omega/(\omega+\lambda)$). Consequently, the number of rewired edges that any given vertex receives is stochastically dominated by Binomial($2\mu n$,$2\omega/(\lambda n)$). 
\end{lemma}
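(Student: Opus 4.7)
The plan is to prove the two assertions sequentially: I first establish the geometric tail for the rewire count of a single edge via a direct competing-clocks argument, then deduce the binomial domination for a fixed vertex by combining this bound with Lemma \ref{l:numberedges} and a coupling.

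For the first assertion, I fix an edge $e$ and let $N$ denote the total number of times it is rewired. Whenever $e$ is an infected edge (attached to an infected vertex with a free endpoint), it is subject to several competing Poisson events: its own rate-$\omega$ rewire/drop clock, its own rate-$\lambda$ infection clock, the rate-$\gamma$ recovery clock of its infected endpoint, and a possible deletion event each time another infection occurs elsewhere. The key observation is that among all these competing events, the probability that the very next one to fire on $e$ is a rewire equals $\omega\alpha$ divided by the total rate, hence is at most $p:=\alpha\omega/(\omega+\lambda)$. After a rewire, $e$ is attached to a uniformly chosen new vertex; depending on its type, $e$ either immediately re-enters the infected regime (infected target), becomes a dormant susceptible--susceptible edge that may be reactivated later when one of its endpoints becomes infected (susceptible target), or is effectively dropped (recovered target). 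In each scenario the strong Markov property applied at the rewire time gives that the conditional probability of a further rewire is still at most $p$. Induction on $k$ then yields $\P(N\ge k)\le p^k$ for every $k\ge 0$, which is exactly the claimed stochastic domination by $\mathrm{Geometric}(p)$.

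For the second assertion, I fix a vertex $v$. From the first part, $\E[N]\le p/(1-p)=\alpha\omega/(\lambda+\omega(1-\alpha))\le \omega/\lambda$. Each rewire picks its new endpoint uniformly among the $n-1$ other vertices, so by a union bound the probability that any given edge $e$ is rewired to $v$ at least once is at most $\E[N]/(n-1)\le 2\omega/(\lambda n)$ for $n\ge 2$. By Lemma \ref{l:numberedges}, on the high-probability event $\{A_e\le 2\mu n\}$ at most $2\mu n$ edges appear throughout the dynamics. A standard stepwise coupling of the (possibly dependent) Bernoulli indicators ``edge $e$ is rewired to $v$ at least once,'' whose conditional success probabilities are uniformly bounded by $2\omega/(\lambda n)$, into independent Bernoullis of the same parameter then delivers the stated stochastic domination by $\mathrm{Binomial}(2\mu n,2\omega/(\lambda n))$.

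\noindent\textbf{Main obstacle.} The most delicate step is the inductive Markov argument in the first part, because $e$ may alternate between infected and dormant states many times before the process terminates, and one must verify that the geometric tail survives all such transitions. The cleanest formalization is to let $\sigma_k$ be the time of the $k$-th rewire of $e$ (set to $+\infty$ if no such rewire exists) and apply the strong Markov property at $\sigma_k$, using that dormant intervals carry no rewire opportunities and therefore do not influence the conditional bound. The between-edge dependence in the second assertion is a milder issue, handled by the standard iterative Bernoulli coupling.
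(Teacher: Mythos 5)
Your proof is correct and follows essentially the same two-step approach as the paper: bound a single edge's rewire count by a geometric random variable via competing Poisson clocks, then combine with the $\{A_e\leq 2\mu n\}$ event from Lemma \ref{l:numberedges} and a per-vertex targeting probability of order $1/n$ to get the binomial domination. Two minor points of comparison: (i) the paper's competing-clocks step considers only the two intrinsic clocks on the edge (rate $\omega$ and rate $\lambda$), treating recovery and independent deletion implicitly as events that only reduce the rewire probability, whereas you enumerate them all and track the infected/dormant/dropped transitions explicitly — this is harmless extra care and arguably cleaner for the ``can an edge be rewired again after going dormant?'' worry you flag; (ii) for the second assertion the paper writes the probability that a fixed edge ever targets $v$ as $1-\E[(1-1/(n-1))^Z]$ with $Z$ geometric and then bounds it by $\E[Z]/(n-1)$, which is exactly your union bound, and the paper asserts the per-edge indicators are independent while you correctly observe they need not be and invoke a stepwise Bernoulli coupling — your formulation is the more careful one, though both yield the same stated domination.
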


\begin{proof}
  	The rewiring/dropping   Poisson processes  and infection  Poisson processes occur independently for each edge in the construction of the SIR-$\omega$ model. By properties of Poisson processes, the probability that  rewiring/dropping occurs before infection is $\omega/(\omega+\lambda)$. Also note that the probability of rewiring  is $\alpha$ in the rewiring/dropping Poisson processes. As a result,  for any infected edge $e$,
	\begin{equation*}
		\P(\mbox{edge }e \mbox{ is rewired before  it infects other vertex or gets recovered})\leq \alpha\omega/(\omega+\lambda).
	\end{equation*}
  	To see this, first note that all other Poisson processes (other than the rewiring/dropping and infection Poisson processes associated with $e$) will either prevent $e$ from rewiring, or have no effect on its rewiring.  On the other hand, the probability that the rewiring Poisson clock rings first before dropping or infection Poisson clocks is equal to $\alpha \omega /(\omega+\lambda)$. 
  
  	Since whether each attempt to rewire succeeds or not is independent,  the number of times that any given edge is rewired is stochastically dominated by a Geometric distribution with success probability $\alpha \omega/(\omega+\lambda)$. On the other hand, when a rewiring occurs, the probability of any given vertex $x$  being selected to receive the rewired edge is at most $1/(n-1)$. As a result, the probability that any given edge $e$ has been   rewired to any given vertex $x$ is bounded from above by
	\begin{equation*}
		1-\E\left[\left( 1-\frac{1}{n-1}\right)^H   \right]                        \mbox{ where }H\sim \mbox{Geometric}(\alpha \omega/(\omega+\lambda)),
	\end{equation*}
	which is further bounded by (for large $n$)
	\begin{equation*}
		\E\left(\frac{H}{n-1}\right) = \frac{1}{n-1} \left(\left(1-\frac{\alpha \omega}{\omega+\lambda} \right)^{-1}-1 \right)\leq \frac{\omega}{(n-1)\lambda}\leq \frac{2\omega}{\lambda n}.
	\end{equation*}
	Since we are working on the event $\{A_e\leq 2\mu n\}$, we know that the number of  edges that have the potential to be rewired is less than or equal to $2\mu n$. Since the infection Poisson processes and rewiring/dropping Poisson processes are independent for different edges,  the total number of rewired edges that any given vertex receives is stochastically dominated by
	\begin{equation*}
    	\mbox{Binomial}\left(2\mu n, \frac{2\omega}{\lambda n}\right).
  	\end{equation*}
\end{proof}

\begin{lemma}\label{boundni}
	Let $N_i$ be the number of edges added to vertex $i$ when it first becomes infected.  Then $N_i$ is stochastically dominated by the independent sum
	\begin{equation*}
    	\mbox{Poisson}(\mu)+\mbox{Binomial}(2\mu n,\omega/(\lambda n)).
  	\end{equation*}
  	Consequently, for some constant $C_{\ref{ni4}}$ we have 
	\begin{equation}\label{ni4}
		\E(N_i^4)\leq C_{\ref{ni4}}.
	\end{equation}
	Similarly, setting $\hat{N}_i$ to be the number of edges of vertex $i$ when it becomes recovered, then
	\begin{equation}\label{hatni4}
		\E(\hat{N}_i^4)\leq C_{\ref{ni4}}.
	\end{equation}
\end{lemma}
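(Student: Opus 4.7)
The plan is to unpack the edge-attachment rule described in Section \ref{s:construct}. At the instant $t$ when vertex $i$ first becomes infected, the construction prescribes
\[
N_i \;=\; \mathrm{Poi}\!\left(\tfrac{S^{(n)}(t^-)\mu}{n}\right) \;+\; \mathrm{Bin}\!\left(W^{(n)}(t^-),\,\tfrac{2}{S^{(n)}(t^-)}\right),
\]
the two summands being conditionally independent given the process history up to $t^-$. Since $S^{(n)}(t^-)/n \leq 1$, the Poisson summand is stochastically dominated by $\mathrm{Poi}(\mu)$ through the standard thinning coupling. For the Binomial summand, I would observe that it counts the rewired $S$--$S$ edges in the pool $\mathcal{W}^{(n)}(t^-)$ that happen to land on $i$, and this count is bounded by the total number of rewired edges $i$ has ever received. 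On the event $\{A_e\leq 2\mu n\}$, which by Lemma \ref{l:numberedges} holds with probability $1-O(1/n)$ and is assumed throughout the paper, Lemma \ref{bound of edge} dominates this total by $\mathrm{Bin}(2\mu n, 2\omega/(\lambda n))$.

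To derive \eqref{ni4}, I would apply the elementary inequality $(a+b)^4 \leq 8a^4 + 8b^4$ and treat the two dominators separately. A $\mathrm{Poi}(\mu)$ variable has a finite fourth moment depending only on $\mu$. For $B \sim \mathrm{Bin}(2\mu n, 2\omega/(\lambda n))$, the factorial moments satisfy
\[
\E[B(B-1)\cdots(B-k+1)] \;=\; \binom{2\mu n}{k}\,k!\,\left(\tfrac{2\omega}{\lambda n}\right)^{k} \;\leq\; \left(\tfrac{4\mu\omega}{\lambda}\right)^{k},
\]
so converting factorial moments to ordinary moments through Stirling numbers gives $\E[B^4]\leq C$ for a constant independent of $n$. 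Summing yields the uniform bound \eqref{ni4}.

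For \eqref{hatni4}, I would note that once $i$ is infected, the only operation that can increase its edge count is the arrival of a rewired edge; the rewiring or dropping of $i$'s own infected edges, and the ``delete with probability $1/S^{(n)}$'' step triggered when some later vertex is infected, can only decrease it. Hence $\hat{N}_i \leq N_i + R_i$, where $R_i$ is the total number of rewired edges that $i$ ever receives, and Lemma \ref{bound of edge} dominates $R_i$ by $\mathrm{Bin}(2\mu n, 2\omega/(\lambda n))$ as well. The same fourth-moment argument then yields \eqref{hatni4}. The only subtle point in the whole proof is the careful reading of the construction to verify that both the $\mathrm{Bin}(W^{(n)}, 2/S^{(n)})$ term at the infection time and the post-infection arrivals of rewired edges are governed by the single global bound of Lemma \ref{bound of edge}; the moment computations themselves are routine.
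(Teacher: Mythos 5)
Your proposal is correct and matches the paper's argument in all essentials: the same Poisson-plus-Binomial decomposition of $N_i$, the same appeal to Lemma \ref{bound of edge} to dominate the Binomial summand, and the same crude convexity bound $(a+b)^4 \leq 8a^4+8b^4$ followed by a finite fourth-moment estimate uniform in $n$. Your treatment of $\hat{N}_i$ via $\hat{N}_i \leq N_i + R_i$ is slightly more explicit than the paper's one-line assertion that $\hat{N}_i$ is dominated by the same kind of sum, and your factorial-moment computation for the Binomial part is a cleaner version of the paper's informal bound $\E(Z_i^4)\leq \sum_{j=1}^4 (2\mu n)^j(2\omega/(\lambda n))^j$; neither difference changes the substance.
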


\begin{proof}
	Let $\mathfrak{t}_i$ be the time when $i$ first becomes infected.
	Then $N_i$ is equal to the independent sum
  	\begin{equation*}
    	\mbox{Poisson}\left(\frac{S^{(n)}(\mathfrak{t}_i)}{n}\mu\right)+\mbox{Binomial}\left(W^{(n)}(\mathfrak{t}_i-),\frac{2}{S^{(n)}(\mathfrak{t}_i-) } \right),
  	\end{equation*}
	where the second part in the sum stands for the number of edges rewired to  vertex $i$ when $i$ first becomes infected. By Lemma \ref{bound of edge} we see $N_i$ is stochastically dominated by the independent sum Poisson($\mu$)+Binomial($2\mu n$,$2\omega/(\lambda n)$). Set $V\sim \mbox{Poisson}(\mu)$, $B\sim \mbox{Binomial}(2\mu n$,$2\omega/(\lambda n))$. In addition, $V$ and  $B$ are independent.  Then we have
	\begin{equation*}
		\E(N_i^4)\leq \E((V+B)^4).
	\end{equation*}
	Note that the fourth moment of Poisson($\mu$) is clearly finite, and 
	\begin{equation*}
		\E(B^4) \leq  \sum_{j=1}^4 (2\mu n)^{j}  \left(\frac{2\omega}{\lambda n}\right)^{j} \leq C.
	\end{equation*}
	Hence, for some sufficiently large constant $C_{\ref{ni4}}$,
	\begin{equation*}
		\E((V+B)^4) \leq \E(8(V^4+B^4))  \leq C_{\ref{ni4}},
	\end{equation*}
	which then implies that $\E(N_i^4)\leq C_{\ref{ni4}}$ and proves \eqref{ni4}. Equation \eqref{hatni4} can be proved in exactly the same way as \eqref{ni4} since $\hat{N}_i$ is also dominated by  the independent sum of  Poisson($\mu$) and Binomial($2\mu n$,$\omega/(\lambda n)$). 
\end{proof}

\section{Existence and uniqueness of the solution to \eqref{hats}-\eqref{hatw}}\label{sec:exiuni}

In this section we prove the existence and uniqueness of the solution to the system \eqref{hats}-\eqref{hatw}. Note that we have already given closed form expressions for $\hat{s}(t)$ and $\hat{w}(t)$ in \eqref{ssolve} and \eqref{wsolve} (which are necessarily unique). Hence, it remains to deal with $\hat{i}$ and $\hat{i}_E$. We treat the case of positive initial condition  in Section \ref{sec:eq>0}, and the case of zero initial condition in Section \ref{sec:eq=0}.

We now show that, the $t_*$ defined in \eqref{deft_*} is strictly smaller than $\hat{s}(0)$. To see this, we use an argument by contradiction. Assume that $t_*=\hat{s}(0)$. Then $\hat{i}_E(t)>0$ for all $0<t<  \hat{s}(0)$. Equations  \eqref{hati_E}, \eqref{ssolve} and \eqref{wsolve} together imply the existence of an $\ep>0$ so that
\begin{equation*}
	\frac{\mathrm{d}\hat{i}_E(t)}{\mathrm{d}t}\leq -1-\frac{\hat{i}_E(t)}{\hat{s}(t)}
\end{equation*} 
holds true for all $t\in (\hat{s}(0)-\ep, \hat{s}(0))$. It follows that
\begin{equation}\label{ddti/e}
	\frac{\mathrm{d}}{\mathrm{d}t}\left(\frac{\hat{i}_E(t)}{\hat{s}(t)}\right)=\frac{1}{\hat{s}(t)}\left(\frac{\mathrm{d}\hat{i}_E(t)}{\mathrm{d}t}+\frac{\hat{i}_E(t)}{\hat{s}(t)}\right)    \leq -\frac{1}{\hat{s}(t)}.
\end{equation}
Integrating \eqref{ddti/e} from $\hat{s}(0)-\epsilon$ to $t$, we see that
\begin{equation*}
	\frac{\hat{i}_E(t)}{\hat{s}(t)} -\frac{\hat{i}_E(\hat{s}(0)-\epsilon)}{\hat{s}(\hat{s}(0)-\epsilon)}
	\leq \int^t_{\hat{s}(0)-\epsilon} \frac{-1}{\hat{s}(u)}\mathrm{d}u=\log \left(\frac{\hat{s}(0)-t}{\ep}\right),
\end{equation*}
which goes to $-\infty$ as $t\to \hat{s}(0)$. We have thus arrived at a contradiction. Therefore, we must have $t_*<\hat{s}(0)$.

\subsection{Existence and uniqueness for positive initial condition}\label{sec:eq>0}

In this case the existence and uniqueness for the pair $(\hat{i},\hat{i}_E)$ follows from standard results in ODE theory. Indeed, the right-hand sides of \eqref{hati} and \eqref{hati_E} are Lipschitz functions of $\hat{i}$ and $\hat{i}_E$ when $\hat{i}_E$ is bounded away from 0. Existence theory for ODE implies that, given any solution $(\hat{i},\hat{i}_E)$  defined for small $t>0$, one can extend its domain to $t_*$ where $\lim_{t\to t_*}\hat{i}_{E}(t)=0$. We  set $\hat{i}_E(t_*)=0$.

As for the uniqueness, assume that there are two solution $(\hat{i}_1,\hat{i}_{E,1})$ and $(\hat{i}_2,\hat{i}_{E,2})$. Given any $\delta>0$, 
\begin{equation*}
	\begin{split}
		b_{\delta,1}&=\inf\{t\geq 0: \hat{i}_{E,1}\leq \delta\},\\
		b_{\delta,2}&=\inf\{t\geq 0: \hat{i}_{E,2}\leq \delta\},\\
		b_{\delta}&=b_{\delta,1}\wedge b_{\delta,2}.
	\end{split}
\end{equation*}
We define $t_{*,1}$ and $t_{*,2}$ to be the first time after 0 that $\hat{i}_{E,1}$ and $\hat{i}_{E,2}$ hit 0, respectively. Then $b_{\delta,1}\leq t_{*,1}<\hat{s}(0)$. Hence, $b_{\delta}<\hat{s}(0)$. By \eqref{hati} and \eqref{hati_E}, for $t\leq b_{\delta}$,
\begin{equation}\label{eq:ie1-ie2}
	\begin{split}
		\abs{\hat{i}_{E,1}(t)-\hat{i}_{E,2}(t)}&\leq 
		\int_0^t \frac{\abs{\hat{i}_{E,1}(u)-\hat{i}_{E,2}(u) }}{\hat{s}(u)}\mathrm{d}u+\frac{\omega \alpha }{\lambda }\int_0^t \abs{\hat{i}_1(u)-\hat{i}_2(u)}\mathrm{d}u\\
		&\leq  \left(\frac{1}{\hat{s}(0)-b_{\delta}} +\frac{\omega \alpha }{\lambda } \right) 
		\int_0^t \left(\abs{\hat{i}_{E,1}(u)-\hat{i}_{E,2}(u)}+
		\abs{\hat{i}_{1}(u)-\hat{i}_{2}(u)}
		\right)\mathrm{d}u,
	\end{split}
\end{equation}
and
\begin{equation}\label{eq:i1-i2}
	\begin{split}
		\abs{\hat{i}_1(t)-\hat{i}_2(t)}&\leq
		\frac{\gamma}{\lambda}   \int_0^t \abs{\frac{\hat{i}_1(u)}{\hat{i}_{E,1}(u)} -	\frac{\hat{i}_2(u)}{\hat{i}_{E,2}(u)}} \mathrm{d}u \\
		&\leq  \frac{\gamma}{\lambda}\     \int_0^t \frac{\abs{\hat{i}_{E,2}\hat{i}_1(u)-\hat{i}_2(u)\hat{i}_{E,1}(u) } }{\hat{i}_{E,1}(u)\hat{i}_{E,2}(u)}\mathrm{d}u\\
		&\leq  \frac{\gamma}{\lambda}\     \int_0^t \left( \frac{\abs{\hat{i}_1(u)-\hat{i}_2(u)}}{\hat{i}_{E,1}(u)} + \frac{\hat{i}_2(u)\abs{\hat{i}_{E,1}(u)-\hat{i}_{E,2}(u)  } }{\hat{i}_{E,1}(u)\hat{i}_{E,2}(u)} \right) \mathrm{d}u\\
		&\leq \frac{\gamma}{\lambda}\left(\frac{1}{\delta}+\frac{1}{\delta^2}\right) \int_0^t \left(\abs{\hat{i}_{E,1}(u)-\hat{i}_{E,2}(u)}+ \abs{\hat{i}_{1}(u)-\hat{i}_{2}(u)} \right)\mathrm{d}u.
	\end{split}
\end{equation}
Combining \eqref{eq:ie1-ie2} and \eqref{eq:i1-i2}, we see that for all $t\leq b_{\delta}$ and some constant $C>0$,
\begin{equation*}
  	\abs{\hat{i}_{E,1}(t)-\hat{i}_{E,2}(t)}+    \abs{\hat{i}_1(t)-\hat{i}_2(t)}\leq  C\int_0^t  \left( \abs{\hat{i}_{E,1}(u)-\hat{i}_{E,2}(u)}+ \abs{\hat{i}_{1}(u)-\hat{i}_{2}(u)}\right) \mathrm{d}u.
\end{equation*}
By Gronwall's inequality we deduce that, for all $t\leq b_{\delta}$ 
\begin{equation*}
	\hat{i}_{E,1}(t)=\hat{i}_{E,2}(t) \mbox{ and } \hat{i}_1(t)=\hat{i}_2(t).
\end{equation*}
Since $\delta$ can be taken to be arbitrarily small, we conclude that $t_{*,1}=t_{*,2}$, and for all $t\leq t_{*,1}$, 
\begin{equation*}
	\hat{i}_{E,1}(t)=\hat{i}_{E,2}(t) \mbox{ and } \hat{i}_1(t)=\hat{i}_2(t).
\end{equation*} 
This proves the uniqueness part. 

\subsection{Existence and uniqueness for zero initial condition} \label{sec:eq=0}

We start from the comparison principle for the system \eqref{hati}-\eqref{hati_E} under positive initial condition. 

\begin{lemma}\label{compare-1}
	Suppose that there are  two solutions $(\hat{i}_1,\hat{i}_{E,1})$ and $(\hat{i}_2,\hat{i}_{E,2})$ such that
	\begin{equation*}
		\hat{i}_1(0)\leq \hat{i}_2(0),\  \hat{i}_{E,1}(0)\leq  \hat{i}_{E,2}(0)  \mbox{ and } \hat{i}_{E,1}(0)> 0.
	\end{equation*} 
	Then for all $t\leq t_{*,2}$,
	\begin{equation*}
		\hat{i}_1(t)\leq \hat{i}_2(t) \mbox{  and  }  \hat{i}_{E,1}(t)\leq  \hat{i}_{E,2}(t). 
	\end{equation*}
\end{lemma}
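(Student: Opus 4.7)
The plan is to establish the comparison via a perturbation argument that exploits the quasi-monotone structure of the subsystem \eqref{hati}--\eqref{hati_E}. A first structural observation is that $\hat{s}$ and $\hat{w}$ are determined by the common data $\hat{s}(0)$ and $\hat{w}(0)=0$ through the explicit formulae \eqref{ssolve}--\eqref{wsolve}, and are therefore the same for both solutions; I treat them as fixed coefficient functions. On the region $\{\hat{i}\geq 0,\hat{i}_E>0\}$ the resulting planar system in $(\hat{i},\hat{i}_E)$ is \emph{quasi-monotone non-decreasing}: the $\hat{i}_E$-partial derivative of the right-hand side of \eqref{hati} equals $\gamma\hat{i}/(\lambda\hat{i}_E^{2})\geq 0$, and the only $\hat{i}$-dependent contribution to the right-hand side of \eqref{hati_E} is $+(\omega\alpha/\lambda)\hat{i}$, which is non-decreasing in $\hat{i}$.

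For $\eta>0$ I would let $(\hat{i}_2^\eta,\hat{i}_{E,2}^\eta)$ be the solution of the system obtained by adding $+\eta$ to the right-hand sides of both \eqref{hati} and \eqref{hati_E}, started from $(\hat{i}_2(0),\hat{i}_{E,2}(0))$, and prove the \emph{strict} comparison $\hat{i}_1<\hat{i}_2^\eta$ and $\hat{i}_{E,1}<\hat{i}_{E,2}^\eta$. Suppose not, and let $t_0>0$ be the first time equality holds in either coordinate. If $\hat{i}_1(t_0)=\hat{i}_2^\eta(t_0)$, then by the choice of $t_0$ we still have $\hat{i}_{E,1}(t_0)\leq \hat{i}_{E,2}^\eta(t_0)$, and \eqref{hati} gives
\[
\frac{d}{dt}\bigl(\hat{i}_2^\eta-\hat{i}_1\bigr)\Big|_{t_0}=\frac{\gamma\hat{i}_1(t_0)}{\lambda}\left(\frac{1}{\hat{i}_{E,1}(t_0)}-\frac{1}{\hat{i}_{E,2}^\eta(t_0)}\right)+\eta\geq \eta>0,
\]
which contradicts the non-positive left derivative of $\hat{i}_2^\eta-\hat{i}_1$ at $t_0$. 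The other case $\hat{i}_{E,1}(t_0)=\hat{i}_{E,2}^\eta(t_0)$ is handled symmetrically using \eqref{hati_E} and the quasi-monotone term $+(\omega\alpha/\lambda)\hat{i}$, which produces
\[
\frac{d}{dt}\bigl(\hat{i}_{E,2}^\eta-\hat{i}_{E,1}\bigr)\Big|_{t_0}=\frac{\omega\alpha}{\lambda}\bigl(\hat{i}_2^\eta(t_0)-\hat{i}_1(t_0)\bigr)+\eta\geq \eta>0,
\]
again a contradiction. Hence the strict comparison holds on the entire joint interval of existence inside $[0,t_{*,2})$.

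To recover the non-strict statement I would send $\eta\to 0$. For any $\delta>0$, the function $\hat{i}_{E,2}$ is bounded below by a positive constant on $[0,t_{*,2}-\delta]$, so on this sub-interval the perturbed right-hand side is Lipschitz and continuous dependence on parameters yields $(\hat{i}_2^\eta,\hat{i}_{E,2}^\eta)\to(\hat{i}_2,\hat{i}_{E,2})$ uniformly as $\eta\to 0$. Passing to the limit in the strict inequality gives the required comparison on $[0,t_{*,2}-\delta]$; letting $\delta\to 0$ and invoking continuity of both solutions at $t=t_{*,2}$ completes the proof.

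The main obstacle is the non-Lipschitz singularity of $\hat{i}/\hat{i}_E$ at $\hat{i}_E=0$: it prevents a direct application of a quasi-monotone comparison theorem on all of $[0,t_{*,2}]$ and forces both the perturbation-and-first-crossing argument and the limit $\eta\to 0$ to be carried out on the interior $[0,t_{*,2}-\delta]$, with the endpoint $t=t_{*,2}$ reached only by continuity.
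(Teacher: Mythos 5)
Your proposal verifies exactly the same quasi-monotone (Kamke--M\"uller) structure of the $(\hat{i},\hat{i}_E)$ subsystem that the paper identifies, and your perturbation-plus-first-crossing argument with $\eta\to 0$ is precisely the standard proof of the comparison theorem for quasi-monotone systems that the paper invokes by citing Smith's monograph. So this is essentially the same approach, with the cited black box opened and the $\hat{i}_E=0$ singularity handled carefully on $[0,t_{*,2}-\delta]$.
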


\begin{proof}
	Denote the right-hand side of \eqref{hati} and \eqref{hati_E}
	by $F_1(\hat{i},\hat{i}_E,t)$ and $F_2(\hat{i},\hat{i}_E,t)$, 
	respectively. In other words, $F_1$ and $F_2$ are defined by
	\begin{equation*}
    	F_1(x,y,t)=-\frac{\gamma}{\lambda}\frac{x}{y}+1,
  	\end{equation*}
	and
	\begin{equation*}
    	F_2(x,y,t)=-1-\frac{\gamma}{\lambda}+\mu (\hat{s}(0)-t)-\frac{y}{\hat{s}(0)-t}+2\frac{\omega \alpha }{\lambda}(\hat{s}(0)-t)\log \frac{\hat{s}(0)}{\hat{s}(0)-t}-\frac{\omega}{\lambda}(1-\alpha+\alpha (1-x)).
  	\end{equation*} 
	Here we have substituted the explicit expressions of \eqref{ssolve} and \eqref{wsolve} into the right-hand side of \eqref{hati_E}. 
	It is clear that for any $x_1,x_2,t\geq 0$ and $y_1,y_2>0$, we have 
	\begin{equation*}
    	F_1(x_1,y_1,t)\leq F_1(x_2,y_2,t) \quad \mbox{ if }x_1=x_2 \mbox{ and } y_1\leq y_2,
  	\end{equation*}
	and 
	\begin{equation*}
    	F_2(x_1,y_1,t)\leq F_2(x_2,y_2,t)  \quad \mbox{ if }y_1=y_2 \mbox{ and } x_1\leq x_2.
  	\end{equation*}
	Thus, the system \eqref{hati}-\eqref{hati_E} satisfies the Kamker-M\"uller conditions, which in turn implies that this system is monotone with respect to its initial condition. See \cite[Theorem 3.2 and  Page 285]{MR1319817}.
\end{proof}

\begin{remark}\label{rem:backcom}
	Lemma \ref{compare-1} implies the following comparison principle. If 
	\begin{equation*}
    	\inf_{0\leq u\leq t} \min\{\hat{i}_{E,1}(u),\hat{i}_{E,2}(u)\}>0, \ \hat{i}_1(t)\leq \hat{i}_2(t), \ \hat{i}_{E,1}(t)\leq \hat{i}_{E,2}(t), 
  	\end{equation*}
	and 
	\begin{equation*}
    	\hat{i}_1(0)=\hat{i}_2(0) \quad (\mbox{resp. }  \hat{i}_{E,1}(0)=\hat{i}_{E,2}(0)),
  	\end{equation*}
	then we have
	\begin{equation*}
    	\hat{i}_{E,1}(0)<\hat{i}_{E,2}(0) \quad (\mbox{resp. } \hat{i}_{1}(0)<\hat{i}_{2}(0)).
  	\end{equation*}
\end{remark}

\begin{remark}\label{rem:onecom}
	Suppose that there are two solutions $(\hat{i}_1,\hat{i}_{E,1})$ and $(\hat{i}_2,\hat{i}_{E,2})$ such that $\hat{i}_{E,1}(0)\leq \hat{i}_{E,2}(0)$ and $\hat{i}_1(t)\leq \hat{i}_2(t)$ for all $t\leq t_{*,1}\wedge t_{*,2}$. Then one can show that $\hat{i}_{E,1}(t)\leq \hat{i}_{E,2}(t)$ for all $t\leq t_{*,1}\wedge t_{*,2}$.
\end{remark}

\subsubsection{Proof of the existence}

Recall that for zero  initial condition $\hat{i}(0)=\hat{i}_E(0)=0$ and $\hat{s}(0)=1$. Using Lemma \ref{compare-1} we can deduce the existence of a  solution to \eqref{hati} and \eqref{hati_E}. Indeed, consider the pair of functions $(\hat{i}_{\ep},\hat{i}_{E,\ep})$ given by the solution to \eqref{hati}-\eqref{hati_E} with the initial condition $\hat{i}_{\ep}(0)=\hat{i}_{E,\ep}(0)=\ep$. (The existence of $(\hat{i}_{\ep},\hat{i}_{E,\ep})$ is guaranteed by the results proved in Section \ref{sec:eq>0}.) Set $t^{\ep}_{*}:=\inf\{t>0: \hat{i}_{E,\ep}(t)=0\}$.
We claim that
\begin{equation}\label{t_*lb}
	\liminf_{\ep \to 0}t^{\ep}_{*}>0.
\end{equation}

\begin{proof}[Proof of \eqref{t_*lb}]
	Using \eqref{hati_E}, we see that for some $C_{\ref{hati'}}>0$,
	\begin{equation}\label{hati'}
		\abs{ \hat{i}'_{E,\ep}}\leq C_{\ref{hati'}}, \quad \forall \ \ep\leq 1,\  t\leq 1/2. 
	\end{equation}
	Hence, $\hat{i}_{E,\ep}\leq \ep+C_{\ref{hati'}}t$. Consequently,
	\begin{equation}\label{hati'2}
		\begin{split}
			\frac{\mathrm{d}\hat{i}_{E,\ep}}{\mathrm{d}t}&=-1-\frac{\gamma}{\lambda}-\frac{\omega }{\lambda}+\mu(1-t)-\frac{\hat{i}_{E,\ep}}{1-t}+\frac{2\hat{w}}{\hat{s}}
			+\frac{\omega  \alpha}{\lambda}\hat{i}_{\ep}\\
			& \geq \mu-1-\frac{\gamma+\omega}{\lambda}-\mu t-\frac{\ep+C_{\ref{hati'}}t}{1-t}.
		\end{split}
	\end{equation}
	Thanks to the assumption $\lambda<\lambda_c=(\gamma+\omega)/(\mu-1 )$ for zero initial condition,
	\begin{equation}\label{hati'3}
		\mu-1-\frac{\gamma+\omega}{\lambda} > \mu-1-\frac{\gamma+\omega}{\lambda_c}= \mu-1-\frac{\gamma+\omega}{(\omega+\gamma)/(\mu-1)}=0.
	\end{equation}
	By \eqref{hati'}, \eqref{hati'2} and \eqref{hati'3}, 
  	\begin{equation*} 
	 	\hat{i}'_{E,\ep}\geq 0 \quad \mbox{  for  } t\leq \left(\mu-1-\frac{\gamma+\omega}{\lambda}-2\ep \right) \min\left\{ \frac{1}{2\mu}, \left( \left(\mu-1-\frac{\gamma+\omega}{\lambda} \right)+2C_{\ref{hati'}} \right)^{-1}\right\}.
  	\end{equation*}
	This implies that $t^{\ep}_{*}$ is uniformly bounded from below for all small $\ep$.  Hence, \eqref{t_*lb} follows. 
\end{proof}

Define $t^0_{*}=\liminf_{\ep\to 0}t^{\ep}_{*}>0$. By Lemma \ref{compare-1}, for each $t< t^0_{*}$, $\hat{i}_{\ep}(t)$ and $\hat{i}_{E,\ep}(t)$  are both decreasing in $\ep$. Therefore, $t^{\ep}_{*}$  actually monotonically decreases to $t^0_{*}$, and we can define
\begin{equation*}
	\begin{split}
		\hat{i}(t)&=\lim_{\ep\to 0}\hat{i}_{\ep}(t),\\
		\hat{i}_E(t)&=\lim_{\ep\to 0}\hat{i}_{E,\ep}(t).
	\end{split}
\end{equation*}
Clearly $(\hat{i}(t),\hat{i}_E(t))$ satisfies \eqref{hati} and \eqref{hati_E} with $\hat{i}(0)=\hat{i}_E(0)=0$. Thus, we obtain a nontrivial solution of \eqref{hati}-\eqref{hati_E} since $t^0_*>0$. To prepare for the proof of uniqueness we need one more ingredient. Suppose that $(\hat{i}(t),\hat{i}_E(t))$ is a solution to \eqref{hati}-\eqref{hati_E}. We claim that
\begin{equation}\label{limsupiie}
	\limsup_{t\to 0}\frac{\hat{i}(t)}{\hat{i}_E(t)}<\infty.    
\end{equation}
To see this, note that by \eqref{hati}, 
\begin{equation}\label{iub}
	\hat{i}(t)\leq t, \quad \forall t\geq 0.
\end{equation}
Repeating the calculation in \eqref{hati'2}, 
\begin{equation}\label{limie/t}
	\hat{i}'_E(0)=\lim_{t\to 0}\frac{\hat{i}_E(t)}{t}=-1-\frac{\gamma}{\lambda}+\mu-\frac{\omega}{\lambda}>-\frac{\gamma+\omega}{\lambda_c}+\mu-1=-\frac{\gamma+\omega}{(\mu-1)/(\gamma+\omega)}+\mu-1=0. 
\end{equation}
Equation \eqref{limsupiie} follows from \eqref{iub} and \eqref{limie/t}.

\subsubsection{Proof of the uniqueness}

We now prove the uniqueness part.  Suppose that  $(\hat{i}_1(t),\hat{i}_{E,1}(t))$ and $(\hat{i}_2(t),\hat{i}_{E,2}(t))$ are two solutions that satisfy \eqref{hati}-\eqref{hati_E}.  We divide the proof into two steps. 

\textbf{Step 1}: We first show that either
\begin{equation}\label{compare0}
	\hat{i}_1(t)\leq  \hat{i}_2(t),\  \hat{i}_{E,1}(t)\leq  \hat{i}_{E,2}(t),\quad  \forall \ t\leq t_{*,1} \wedge t_{*,2},
\end{equation}
or \begin{equation}\label{compare00}
	\hat{i}_1(t)\geq  \hat{i}_2(t),\  \hat{i}_{E,1}(t)\geq  \hat{i}_{E,2}(t),\quad  \forall \ t\leq t_{*,1} \wedge t_{*,2},
\end{equation}
To see this, we consider two cases.
\begin{itemize}
	\item Case \textcircled{1}: either
	\begin{equation*}
    	\hat{i}_1(t)\leq  \hat{i}_2(t),\quad  \forall \ t\in (0,t_{*,1}\wedge t_{*,2}),
  	\end{equation*}
	or 
	\begin{equation*}
    	\hat{i}_1(t)\geq  \hat{i}_2(t),\quad  \forall \ t\in (0,t_{*,1}\wedge t_{*,2}).
 	\end{equation*}
	Using Remark \ref{rem:onecom} we see that either \eqref{compare0}
	or \eqref{compare00} holds true. 
	
	\item Case \textcircled{2}: the opposite of Case \textcircled{1}.  Without loss of generality let us assume  that  for some $0<t_3< t_1<t_{*,1}\wedge t_{*,2}$, we have
	$\hat{i}_{1}(t_3)>\hat{i}_{2}(t_3)$ and $\hat{i}_{1}(t_1)<\hat{i}_{2}(t_1)$. There must exist $t_2\in (t_3,t_1)$ such that $\hat{i}_1(t_2)=\hat{i}_2(t_2)$. By Lemma \ref{compare-1} and Remark \ref{rem:backcom}, we must have $\hat{i}_{E,2}(t_2)>\hat{i}_{E,1}(t_2)$, which, in turn, implies that $\hat{i}_{E,2}(t_3)>\hat{i}_{E,1}(t_3)$. Now we define 
	\begin{equation*}
    	t_4=\sup\{0\leq t\leq t_3: \hat{i}_1(t)=\hat{i}_2(t) \mbox{ or } \hat{i}_{E,1}(t)= \hat{i}_{E,2}(t)\}.
  	\end{equation*}
	We then have $0\leq t_3<t_3$. Moreover, for all $t_4<t<t_3$, we have 
  	\begin{equation}\label{compare6}
    	\hat{i}_1(t)>\hat{i_2}(t) \mbox{ and } \hat{i}_{E,1}(t)< \hat{i}_{E,2}(t).
  	\end{equation}
  	Now there are two possible scenarios:
  	\begin{itemize}
	 	\item Scenario 1:  $\hat{i}_1(t_4)=\hat{i}_2(t_4)$ and $\hat{i}_{E,1}(t_4)\leq \hat{i}_{E,2}(t_4)$. Using this and equations  \eqref{hati} and \eqref{compare6},
    	\begin{equation}
      		\begin{split}
        		0 &= (\hat{i}_1(t_3)-\hat{i}_1(t_4))- (\hat{i}_2(t_3)-\hat{i}_2(t_4))\\
        		&=\frac{\gamma}{\lambda} \int_{t_4}^{t_3} \left(\frac{\hat{i}_2(u)}{\hat{i}_{E,2}(u)} -\frac{\hat{i}_1(u)}{\hat{i}_{E,1}(u)}\right) \mathrm{d}u<0,
      		\end{split}
	  	\end{equation}
    	which is a contradiction.  

	  	\item Scenario 2:   $\hat{i}_1(t_4)>\hat{i}_2(t_4)$ and $\hat{i}_{E,1}(t_4)= \hat{i}_{E,2}(t_4)$. Then we have $t_4\in (0,t_3)$.  Using Lemma \ref{compare-1}, we get $\hat{i}_1(t_2) \geq \hat{i}_2(t_2)$. But this contradicts with the assumption $\hat{i}_1(t_2) < \hat{i}_2(t_2)$ in Case \textcircled{2}.
	\end{itemize}
	Since both scenarios are impossible, we conclude that Case \textcircled{2}  cannot occur. 
\end{itemize}
We have now finished the proof of \eqref{compare0} and \eqref{compare00}.

\textbf{Step 2}: Without loss of generality we assume that \eqref{compare0} holds true.  Using \eqref{hati_E},
\begin{equation}\label{pfstep2}
	\begin{split}
		\hat{i}_{E,2}(t)-\hat{i}_{E,1}(t)&\leq 
		\int_0^t \frac{\omega \alpha }{\lambda } (\hat{i}_2(u)-\hat{i}_1(u))\mathrm{d}u.
	\end{split}
\end{equation}
Define the function 
\begin{equation*}
  	H(t):=\sup_{0\leq u\leq t}\abs{\hat{i}_{1}(u)-\hat{i}_2(u)}.
\end{equation*}
For $t$ small, using \eqref{hati}, \eqref{limie/t}, \eqref{limsupiie} and \eqref{pfstep2},  
\begin{equation*}
	\begin{split}
		\hat{i}_2(t)-\hat{i}_1(t)&= \frac{\gamma}{\lambda} \int_0^t \left( \frac{\hat{i}_1(u)}{\hat{i}_{E,1}(u)}-\frac{\hat{i}_2(u)}{\hat{i}_{E,2}(u)} \right) \mathrm{d}u\\
		&\leq \frac{\gamma}{\lambda}\int_0^t \left( \frac{\hat{i}_1(u)}{\hat{i}_{E,1}(u)}-\frac{\hat{i}_1(u)}{\hat{i}_{E,2}(u)} \right) \mathrm{d}u\\
		& =\frac{\gamma}{\lambda} \int_0^t  \frac{\hat{i}_{1}(u) (\hat{i}_{E,2}(u)-\hat{i}_{E,1}(u))}{\hat{i}_{E,1}(u)\hat{i}_{E,2}(u)}\mathrm{d}u\\
		&\leq C \int_0^t \frac{1}{u} \left(\int_0^u \left(\hat{i}_2(r)-\hat{i}_1(r)\right) dr\right) \mathrm{d}u.
	\end{split}
\end{equation*}
It follows that
\begin{equation*}
	H(t)\leq C \int_0^t \frac{1}{u} uH(u)\mathrm{d}u\leq C\int_0^t H(u)\mathrm{d}u. 
\end{equation*}
Since $H(0)=0$, applying the Gronwall's ineqaulity we deduce that $H(t)=0$ for all $t$ small. This implies that $\hat{i}_{E,1}(t)=\hat{i}_{E,2}(t)$ for all $t$ small enough (say, for $t\leq \ep_0$). From the proof of the results in Section \ref{sec:eq>0} and the fact that $\hat{i}_{E,1}(\ep_0)=\hat{i}_{E,2}(\ep_0)>0$, we see that $\hat{i}_{E,1}(t)=\hat{i}_{E,2}(t)$ for all $\ep_0\leq t\leq t_{*,1}\wedge t_{*,2}$. Hence, $t_{*,1}=t_{*,2}$ and $(\hat{i}_1(t),\hat{i}_{E,1}(t))=(\hat{i}_2(t),\hat{i}_{E,2}(t))$ for all $t\leq t_{*,1}$.

\section{Approximations of the SIR-$\omega$ process by differential equations}\label{sec:conv}

The proof of the convergences in Theorem \ref{limitode}  is based on the general strategy  of tightness  and uniqueness argument. The starting point is the  Dynkin's formula (see, e.g., Chapter 4, Proposition 1.7 in \cite{MR838085}), which says that, if $V(t)$ is a Markov chain, then for any function $f$ in the domain of $L$ (the Markov generator for $V(t)$), we have 
\begin{equation*}
  	f(V(t))=f(V(0))+\int_0^t Lf(V(s))\mathrm{d}s+M(t),
\end{equation*}
where $M(t)$ is a martingale. This  is also called Dynkin's decomposition. Suppose for a sequence of Markov chains $V^{(n)}$ and a function $f$, the martingale term $M^{(n)}(t)$ converges to $0$ as $n\to\infty$, and the drift part $Lf(V^{(n)}(t))$ can be approximated by $g(f(V^{n}(t)),t)$ for some function $g$. Then, under the tightness of $f(V^{(n)}(t))$, its possible limit point $u(t)$ satisfies
\begin{equation*}
  	u(t)=u(0)+\int_0^t g(u(s),s)\mathrm{d}s.
\end{equation*}
If this equation has a  unique solution for any initial condition, then  $f(V^{(n)}(t))$ convergence to $u(t)$ in probability.  We will apply the tightness/uniqueness framework to $\hat{S}^{(n)}, \hat{W}^{(n)}, \hat{I}^{(n)}, \hat{I}^{(n)}_E$. 

Section \ref{sec:conv} is organized  into three parts. We first prove \eqref{eq:hatsw} in Section \ref{subsec:pfsw}, which is composed of three subsections \ref{subsub4.1.1} - \ref{subsub4.1.3}. In Section \ref{subsub4.1.1}, we figure out the Dynkin's decomposition of $\hat{S}^{(n)}$ and find that the drift term is  deterministic. Thus, by   controlling its martingale part, we  prove the convergence of $\hat{S}^{(n)}/n$. In Section \ref{subsub4.1.2} we prove $\hat{W}^{(n)}$ converges to $\hat{w}$ for $t\leq \hat{s}(0)-\epsilon$. In this regime, it is relatively easy to control the drift part in the Dynkin's decomposition of $\hat{W}^{(n)}$ and thus deduce its tightness. In Section \ref{subsub4.1.3} we lift this restriction, by controlling the oscillations of $\hat{W}^{(n)}/n$ and $\hat{w}$ in the interval $t\in [\hat{s}(0)-\epsilon,\hat{s}(0)]$. 

Section \ref{sec:ie>0} deals with the convergences of $(\hat{I}^{(n)}/n,\hat{I}_E^{(n)}/n)$ under the condition $\hat{i}(0),\hat{i}_E(0)>0$. We write down the evolution equations for $\hat{I}^{(n)}$ and $\hat{I}_E^{(n)}$ in Section  \ref{subsub4.2.1}. In Section \ref{subsub4.2.2} we first state  the convergence of $(\hat{I}^{(n)}/n, \hat{I}^{(n)}_E/n)$ up to the time when $\hat{I}_E^{(n)}$ drops below $\delta n$ for some small $\delta>0$. Then we remove this restriction  by again estimating the fluctuations of $\hat{I}^{(n)}$ and $\hat{I}^{(n)}_E$ when $t$ is near  $t_*\wedge \hat{\tau}^{(n)}$. 
 
Finally,  in Section \ref{sec:ie=0} we prove \eqref{eq:hatx} for  zero initial condition case where $\hat{i}(0)=\hat{i}_E(0)=0$.

\subsection{Proof of \eqref{eq:hatsw}: convergence of $\hat{S}^{(n)}(t)/n$ and $\hat{W}^{(n)}(t)/n$}\label{subsec:pfsw}

\subsubsection{Proof of the convergence of $\hat{S}^{(n)}(t)/n$ to $\hat{s}(t)$}\label{subsub4.1.1}

We start from the analysis of $\hat{S}^{(n)}(t)$. Corresponding to the three types of Poisson processes, we can divide the change of $\hat{S}^{(n)}(t)$ into three categories.
\begin{itemize}
	\item Change of $\hat{S}^{(n)}(t)$ due to recovery Poisson processes is 0.
	\item Change of $\hat{S}^{(n)}(t)$ due to rewiring/dropping Poisson processes is 0.
	\item Change of $\hat{S}^{(n)}(t)$ due to infection Poisson Processes is  $-1$. (Hereafter, the minus sign in the change of certain quantity means  that quantity will decrease.) The rate for this to occur is 
	\begin{equation*}
    	\lambda \hat{I}^{(n)}_E(t) \times \frac{n}{\lambda \hat{I}^{(n)}_E(t)}=n.
  	\end{equation*}
	(The factor $\frac{n}{\lambda \hat{I}^{(n)}_E(t)}$ comes from the fact that we are considering the time-changed dynamics.)
\end{itemize}
Collecting these three types of changes and applying Dynkin's formula, we see that
\begin{equation}\label{hats2}
	\hat{S}^{(n)}(t)=\hat{S}^{(n)}(0)+\int_0^t (-n)\mathrm{d}u+M^{(n)}_1(t),
\end{equation}
where $M^{(n)}_1(t)$ is a martingale. Note that we can also define $M^{(n)}_1(t)$ for $t\geq \hat{\tau}^{(n)}$  ($\hat{\tau}^{(n)}$ is the first time that $\hat{I}^{(n)}_E$ reaches 0) by simply setting $M^{(n)}_1(t)=M_1(\hat{\tau}^{(n)})$. Then $M^{(n)}_1(t)$ is  a martingale defined for all $t\geq 0$. 

By \eqref{hats2}, the quadratic variation process of $M^{(n)}_1(t)$, $\langle M^{(n)}_1(t),M^{(n)}_1(t)\rangle$, is equal to the quadratic variation process of $\hat{S}^{(n)}(t)$. The quadratic variation process of $\hat{S}^{(n)}(t)$ can be bounded using  previous analysis for the change of $\hat{S}^{(n)}(t)$. In particular, since the clocks in infection Poisson processes can effectively ring at most $n$ times, we have 
\begin{equation}\label{quast}
	\langle \hat{S}^{(n)}(t),\hat{S}^{(n)}(t) \rangle =\sum_{t'\leq t} 	(\hat{S}^{(n)}(t')- \hat{S}^{(n)}(t'-))^2 \leq n \times 1^2=n,\quad \forall \ t\leq \hat{\tau}^{(n)}.
\end{equation}
Consequently, 
\begin{equation*}
	\sup_{t\geq 0} \E\left(M^{(n)}_1(t)^2\right) \leq n.
\end{equation*}
By $L^2$ maximal inequality applied to the martingale $M^{(n)}_1(t)$, we have 
\begin{equation}\label{m1t1}
	\E\left( \sup_{0\leq t\leq \hat{\tau}^{(n)} }M^{(n)}_1(t)^2\right)\leq 4n.
\end{equation}
Markov inequality then implies that 
\begin{equation}\label{m1t2}
	\P\left(\sup_{t \leq \hat{\tau}^{(n)}}\abs{M^{(n)}_1(t)}>n^{2/3} \right)\leq \frac{4n}{n^{4/3}}\leq 4n^{-1/3}.
\end{equation}
Dividing both sides of \eqref{hats2} by $n$, we get
\begin{equation}\label{hats3}
	\frac{ \hat{S}^{(n)}(t)}{n}=\frac{ \hat{S}^{(n)}(0)}{n}- t+\frac{M^{(n)}_1(t)}{n}.
\end{equation}
Recall that we have assumed that  $\hat{S}^{(n)}(0)/n \CP \hat{s}(0)$.  Combining \eqref{m1t2} and \eqref{hats3},
\begin{equation}\label{slim2}
	\sup_{t\leq \hat{\tau}^{(n)}}\abs{ \hat{S}^{(n)}(t)/n-(\hat{s}(0)-t)}\CP 0,
\end{equation}
as desired. 

\subsubsection{Proof of the convergence of $\hat{W}^{(n)}(t)/n$ to $\hat{w}(t)$ for $t$ away from $\hat{s}(0)$}\label{subsub4.1.2}

Now we analyze the evolution equation of  $\hat{W}^{(n)}(t)$.
\begin{itemize}
	\item Change of $\hat{W}^{(n)}(t)$ due to recovery Poisson processes is 0.
	\item Change of $\hat{W}^{(n)}(t)$ due to rewiring/dropping Poisson processes is  1 with probability $\alpha\hat{S}^{(n)}(t)/n$, and 0 with probability $1-\alpha\hat{S}^{(n)}(t)/n$. Thus, the mean of the change is $\alpha\hat{S}^{(n)}(t)/n$. The rate for such an event to occur is
	\begin{equation*}
    	\omega \hat{I}^{(n)}_E(t) \times \frac{n}{\lambda \hat{I}^{(n)}_E(t)}=\frac{n\omega}{\lambda}.
  	\end{equation*}
	\item Change of $\hat{W}^{(n)}(t)$ due to infection Poisson processes is
	\begin{equation*}
    	-\mbox{Binomial}(\hat{W}^{(n)}(t),2/\hat{S}^{(n)}(t)).
  	\end{equation*}
	The mean of the change is $-2\hat{W}^{(n)}(t)/\hat{S}^{(n)}(t)$.
	The rate for such an event to occur is 
	\begin{equation*}
    	\lambda \hat{I}^{(n)}_E(t) \times \frac{n}{\lambda \hat{I}^{(n)}_E(t)}=n.
  	\end{equation*}
\end{itemize}
Hence, we can write, by Dynkin's formula,  
\begin{equation}\label{hatweq}
	\hat{W}^{(n)}(t)=\hat{W}^{(n)}(0)+\int_0^t \left(\frac{ \alpha\hat{S}^{(n)}(u)}{n} \frac{n\omega}{\lambda} -n \frac{2\hat{W}^{(n)}(u)}{\hat{S}^{(n)}(u)}\right)\mathrm{d}u  +M^{(n)}_2(t). 
\end{equation}

We first control the martingale $M^{(n)}_2(t)$. Using our analysis for the change of $\hat{W}^{(n)}(t)$, 
\begin{equation*}
	\langle M^{(n)}_2(t),M^{(n)}_2(t) \rangle = \langle \hat{W}^{(n)}(t),\hat{W}^{(n)}(t) \rangle  \leq \sum_{i=1}^n N_i^2+ Q(t),
\end{equation*}
where $Q(t)$ is the number of arrivals of rewiring/dropping Poisson processes by time $t$ and $N_i$ is the number of edges added to vertex $i$ when vertex $i$ first becomes infected. Since $Q(t)$ has total  rate $\omega n/\lambda$, 
\begin{equation}\label{boundqt}
	\E(Q(t))\leq \frac{\omega n t}{\lambda}.
\end{equation}
By Lemma \ref{boundni} we see 
\begin{equation}\label{boundsumni}
	\E\left(\sum_{i=1}^n N_i^2 \right) \leq Cn.
\end{equation}
Combining \eqref{boundqt} and \eqref{boundsumni} we see that there exists some constant $C$ such that for all $t\leq 2$,
\begin{equation*}
	\E \langle M^{(n)}_2(t),M^{(n)}_2(t) \rangle  \leq Cn.
\end{equation*}
Similarly to \eqref{m1t2} we deduce that
\begin{equation}\label{m1t4}
	\P\left(\sup_{t \leq \hat{\tau}^{(n)} \wedge 2}\abs{M^{(n)}_2(t)}>n^{2/3} \right)\leq \frac{Cn}{n^{4/3}}\leq Cn^{-1/3}.
\end{equation}

To finish the proof of \eqref{eq:hatsw}, we need to show
\begin{equation}\label{strongwt}
	\sup_{t\leq \hat{s}(0)\wedge \hat{\tau}^{(n)}}\abs{\hat{W}^{(n)}(t)/n-\hat{w}(t)}\CP 0.
\end{equation}
We first prove a weaker version of \eqref{strongwt}. 
Fix any $\ep>0$. We now prove
\begin{equation}\label{weakwt}
	\sup_{0\leq t\leq (\hat{s}(0)-\ep) \wedge  \hat{\tau}^{(n)} }\abs{\hat{W}^{(n)}(t)/n-\hat{w}(t)}\CP 0.
\end{equation}

\begin{proof}[Proof of \eqref{weakwt}]
	We prove \eqref{weakwt} in two steps. First we  show that $\hat{W}^{(n)}(t)/n$ is a tight sequence in D[0,$\hat{s}(0)-\ep$], the space of C\`adl\`ag functions on the interval [0,$\hat{s}(0)-\ep$]. Then we show the uniqueness of possible sequential limit. 
	
	Since $\hat{W}^{(n)}(0)  \to \hat{w}(0)$, we know   $\{  \hat{W}^{(n)}(0)/n \}_{n\geq 1}$ is a tight sequence of random variables. To establish tightness of  $\{ \hat{W}^{(n)}(t)/n ,0\leq t\leq 1-\ep\}_{n\geq 1}$,  we need to show that for any fixed $\ep',\delta>0$, there exist $\theta>0$ and an integer $n_0$ so that for all $n\geq n_0$,
	\begin{equation}\label{tightness}
		\P\left (\sup_{\abs{t_1-t_2}\leq \theta, t_1<t_2\leq \hat{s}(0)-\ep  } \abs{ \frac{\hat{W}^{(n)}(t_1)}{n} - \frac{\hat{W}^{(n)} (t_2)} {n} }  \geq \delta \right)\leq \ep'.
	\end{equation}
	
	Assuming \eqref{tightness} for the moment, we see that  $\{\hat{W}^{(n)}(t)/n,0\leq t\leq \hat{s}(0)-\ep\}$, as an element of D[0,$\hat{s}(0)-\ep$],  satisfies condition (ii) of	Proposition 3.26 in \cite{MR1943877}. Consequently, $\{\hat{W}^{(n)}(t)/n,0\leq t\leq \hat{s}(0)-\ep\}_{n\geq 1}$ is a tight sequence. 
	
	Now we show that any sequential  limit of $\hat{W}^{(n)}(t)/n$ coincides with $\hat{w}(t)$. By the tightness of $\{\hat{W}^{(n)}(t)/n,t\geq 0\}_{n\geq 1}$, we see for any subsequence of $\hat{W}^{(n)}(t)/n$ we can extract a further subsequence that converges in distribution to a  process $\check{w}(t)$ with continuous sample path. By the Skorokhod representation theorem we can assume the convergence is actually in the almost sure sense, and we can also assume that $\hat{S}^{(n)}(t)/n$  converges a.s. to $\hat{s}(t)$ in the interval [0,$\hat{s}(0)-\ep$] and $M^{(n)}_2(t)/n$ converges to 0 a.s.	It remains to prove that the limit  point $\check{w}(t)$ coincides with $\hat{w}(t)$, which is then necessarily independent of the subsequence. But this is clearly the case. Indeed, by dividing both sides of \eqref{hatweq} by $n$ and then sending $n$ to $\infty$, we obtain that
	\begin{equation*}
		\check{w}(t)=\hat{w}(0)+\int_0^t \frac{\omega}{\lambda}\alpha \hat{s}(u)\mathrm{d}u-\int_0^t 2\frac{\check{w}(u)}{\hat{s}(u)}\mathrm{d}u,
	\end{equation*}
	for $t\leq \hat{s}(0)-\ep$. This equation has a unique solution $\check{w}(t)=\hat{w}(t)$. 
	
	It remains to prove \eqref{tightness}. We have
	\begin{equation}\label{wdiff}
		\frac{\hat{W}^{(n)}(t_2)}{n}-\frac{\hat{W}^{(n)}(t_1)}{n} =\int_{t_1}^{t_2}\left(\frac{\alpha \omega \hat{S}^{(n)}(u) }{\lambda n} -\frac{2\hat{W}^{(n)}(u)}{\hat{S}^{(n)}(u)}\right)\mathrm{d}u +\frac{M^{(n)}_2(t_2)-M^{(n)}_2(t_1)}{n}.
	\end{equation}
	We define the event
	\begin{equation}\label{s>ep}
		\Omega_{\ref{s>ep}}:=\{\hat{S}^{n}(t)\geq \ep n/2 ,\ \forall t\leq (\hat{s}(0)-\ep)\wedge \hat{\tau}^{(n)} \} \cap \{  \sup_{t \leq \hat{\tau}^{(n)} \wedge 2}\abs{M^{(n)}_2(t)}\leq n^{2/3}  \}. 
	\end{equation}
	By \eqref{slim2} and \eqref{m1t4},  $\P(\Omega_{\ref{s>ep}}) \to 1$ as $n\to\infty$. On $\Omega_{\ref{s>ep}}$, for $t_1<t_2<\hat{s}(0)-\ep$, using \eqref{wdiff}, 
	\begin{equation*}
		\begin{split}
			&\abs{\frac{\hat{W}^{(n)}(t_2)}{n}-\frac{\hat{W}^{(n)}(t_1)}{n}}\\
			\leq &(t_2-t_1)\left(\frac{\alpha \omega}{\lambda }+\frac{2\mu n}{\ep n/2}\right)+\frac{2}{n} \sup_{t \leq \hat{\tau}^{(n)} \wedge 1}\abs{M^{(n)}_2(t)}\\
			\leq & C\left((t_2-t_1)+n^{-1/3}\right).
		\end{split}
	\end{equation*}
	This proves \eqref{tightness} and thus also completes the proof of \eqref{weakwt}. 
\end{proof}

\subsubsection{Proof of the convergence of $\hat{W}^{(n)}(t)/n$ to $\hat{w}(t)$ for all $t\leq \hat{s}(0)\wedge \hat{\tau}^{(n)}$}\label{subsub4.1.3}

Having proved \eqref{weakwt}, we now use it to show the original statement \eqref{eq:hatsw}. In other words,  given any $\ep'>0$, we need to show
\begin{equation}\label{strongwt2}
	\lim_{n\to\infty} \P\left(  \sup_{0\leq t\leq \hat{s}(0)\wedge \hat{\tau}^{(n)} }\abs{\frac{\hat{W}^{(n)}(t)}{n}-\hat{w}(t)}>\ep' \right)=0.
\end{equation}
Denote the oscillation of a function $h(t)$ in any given interval $[a,b]$  by 
\begin{equation}\label{defosc}
	\mbox{Osc}\left[h(t), [a,b]\right]:=\sup_{a\leq u\leq v \leq b}\abs{h(u)-h(v)}.
\end{equation}
Observe that
\begin{equation}\label{wnt2}
	\begin{split}
		&\sup_{0\leq t\leq \hat{s}(0)\wedge \hat{\tau}^{(n)} }\abs{\frac{\hat{W}^{(n)}(t)}{n}-\hat{w}(t)} \\
		\leq & 	\sup_{0\leq t\leq (\hat{s}(0)-\ep) \wedge \hat{\tau}^{(n)} }\abs{\frac{\hat{W}^{(n)}(t)}{n}-\hat{w}(t)} + \mbox{Osc}\left[\frac{\hat{W}^{(n)}(t)}{n},[(\hat{s}(0)-\ep)\wedge \hat{\tau}^{(n)}, \hat{s}(0)\wedge \hat{\tau}^{(n)}]\right]\\
		& + \mbox{Osc}[\hat{w}(t),[\hat{s}(0)-\ep, \hat{s}(0)]]\\
		:= & I_1^{(n)}+I_2^{(n)}+I_3^{(n)}. 
	\end{split}
\end{equation}

Fix any $\ep'>0$, we need to find $\ep$  so that 
\begin{equation}\label{i1i2i3}
	\lim_{n\to\infty}\P(I_1^{(n)}>\ep'/3)=\lim_{n\to\infty}\P(I_2^{(n)}>\ep'/3)=0 \mbox{ and } I_3^{(n)}\leq \ep'/3. 
\end{equation}
Once \eqref{i1i2i3} is proved, \eqref{strongwt2} follows from the union bound.

Equation \eqref{weakwt} already implies that $\P(I_1^{(n)}>\ep'/3)\to 0$ for any fixed $\ep$ and $\ep'$, so we only need to estimate $I_2^{(n)}$ and $I_3^{(n)}$. From \eqref{wsolve}  we have the explicit expression for $\hat{w}(t)$:
\begin{equation*}
  	\hat{w}(t)=\frac{\omega \alpha }{\lambda}(\hat{s}(0)-t)^2 \log \frac{\hat{s}(0)}{\hat{s}(0)-t}.
\end{equation*}
We can compute its derivative
\begin{equation*}
	\hat{w}'(t)=\frac{\omega \alpha }{\lambda } \left(2(t-\hat{s}(0)) \log \frac{\hat{s}(0)}{\hat{s}(0)-t}+(\hat{s}(0)-t) \right).
\end{equation*}
From this we see that the derivative of $\hat{w}(t)$ is uniformly bounded.
\begin{equation*}
	\sup_{0\leq t\leq \hat{s}(0)} \abs{  \hat{w}'(t)}\leq C.
\end{equation*}
It follows that
\begin{equation*}
	I_3^{(n)}=\mbox{Osc}[\hat{w},[\hat{s}(0)-\ep, \hat{s}(0)]]\leq C\ep. 
\end{equation*}
Also note that 
\begin{equation*}
  	\hat{w}(\hat{s}(0)-\ep) = \frac{\omega \alpha}{\lambda }\ep^2 \log (\hat{s}(0)/\ep)\leq C \ep. 
\end{equation*}
Hence, by choosing $\ep$ small enough we can make 
\begin{equation}\label{i3bd}
	I_3^{(n)}\leq \ep'/3 \mbox{ and } \hat{w}(\hat{s}(0)-\ep)\leq \ep'/3.
\end{equation}
It remains to control $I_2^{(n)}$.  Note that 
\begin{align*}
  	& I_2^{(n)} \left\{\begin{array}{l}=0,\ \text{ if }\ \hat{\tau}^{(n)}<\hat{s}(0)-\ep;\\
  	\leq \sup_{\hat{s}(0)-\ep\leq t \leq  \hat{s}(0)\wedge \hat{\tau}^{(n)}} \hat{W}^{(n)}(t)/n, \ \text{ if }\ \hat{\tau}^{(n)}\geq \hat{s}(0)-\ep.\end{array}\right.
\end{align*} 
Define the event
\begin{equation}\label{ws0-ep}
	\Omega_{\ref{ws0-ep}}=\{\hat{\tau}^{(n)}<\hat{s}(0)-\ep\} \cup \{\hat{\tau}^{(n)}\geq \hat{s}(0)-\ep, \hat{W}(\hat{s}(0)-\ep)>\ep' n/12\}.
\end{equation}
By \eqref{weakwt}, 
\begin{equation*}
	\lim_{n\to\infty} \P\left( \Omega_{\ref{ws0-ep}} \right)=1. 
\end{equation*}
By \eqref{wdiff},  on $\Omega_{\ref{s>ep}}\cap \Omega_{\ref{ws0-ep}} \cap \{\hat{\tau}^{(n)}\geq \hat{s}(0)-\ep\}$, for all $\hat{s}(0)-\ep\leq t \leq \hat{s}(0)\wedge \hat{\tau}^{(n)}$, 
\begin{equation}\label{wdd}
	\begin{split}
		\frac{\hat{W}^{(n)}(t)}{n}&\leq \frac{\hat{W}^{(n)}(\hat{s}(0)-\ep)}{n}+\frac{\alpha \omega }{\lambda } \ep +\frac{2\sup_{t}\abs{M^{(n)}_2(t)}}{n} \\
		&\leq \frac{\ep'}{12}+\frac{\alpha \omega }{\lambda } \ep +\frac{2 n^{2/3} }{n},
	\end{split}
\end{equation}
which is smaller than $\ep'/3$ for $\ep$ small enough and $n$ large enough. 

Consequently, on $\Omega_{\ref{s>ep}}\cap \Omega_{\ref{ws0-ep}}$,
$I_2\leq \ep'/3$. It follows that
\begin{equation}\label{i2bd}
	\lim_{n\to\infty}\P(I_2^{(n)}>\ep'/3)\leq \lim_{n\to\infty}\P\left(\left(\Omega_{\ref{s>ep}}\cap \Omega_{\ref{ws0-ep}}\right)^c\right)=0,
\end{equation}
since both $\P(\Omega_{\ref{s>ep}})$ and $\P(\Omega_{\ref{ws0-ep}})$ tend to 1 as $n\to\infty$. Equation \eqref{i1i2i3} now follows from \eqref{i3bd} and \eqref{i2bd}. This completes the proof of \eqref{eq:hatsw}.

\subsection{Proof of \eqref{eq:hatx} for positive initial condition}\label{sec:ie>0}

\subsubsection{Analysis of the evolution equations for $\hat{I}^{(n)}$ and $\hat{I}^{(n)}_E$.}\label{subsub4.2.1}

We first analyze $\hat{I}^{(n)}(t)$.
\begin{itemize}
	\item Change of $\hat{I}^{(n)}(t)$ due to recovery Poisson processes is $-1$. The rate for such an event to occur is 
	\begin{equation*}
    	\gamma \hat{I}^{(n)}(t) \times \frac{n}{\lambda \hat{I}^{(n)}_E(t)}=\frac{ \gamma \hat{I}^{(n)}(t) n}{\lambda \hat{I}^{(n)}_E(t)}.
  	\end{equation*}
	\item Change of $\hat{I}^{(n)}(t)$ due to rewiring/dropping Poisson processes is 0.
	\item Change of  $\hat{I}^{(n)}(t)$ due to infection Poisson processes is 1. The rate for such an event to occur is 
	\begin{equation*}
    	\lambda \hat{I}^{(n)}_E(t) \times \frac{n}{\lambda \hat{I}^{(n)}_E(t)}=n.
  	\end{equation*}
\end{itemize}
By Dynkin's formula again, we can decompose $\hat{I}^{(n)}(t)$ as follows: 
\begin{equation}\label{hatieq}
	\hat{I}^{(n)}(t)=\hat{I}^{(n)}(0)+\int_0^t\left(-\frac{ \gamma \hat{I}^{(n)}(t) n}{\lambda \hat{I}^{(n)}_E(t)}+n \right)  \mathrm{d}u+M^{(n)}_3(t). 
\end{equation}
The quadratic variation of the martingale $M^{(n)}_3(t)$ can be bounded in the similar way to $M^{(n)}_1(t)$:
\begin{equation*}
	\langle M^{(n)}_3(t),M^{(n)}_3(t) \rangle=\langle \hat{I}^{(n)}(t),  \hat{I}^{(n)}(t) \rangle  \leq \sum_{i=1}^n 1^2+\sum_{i=1}^n 1^2=2n.
\end{equation*} 
Similarly to \eqref{m1t2},
\begin{equation}\label{m3bd}
	\P\left(\sup_{t \leq  \hat{\tau}^{(n)}}\abs{M^{(n)}_3(t)} > n^{2/3} \right) \leq \frac{8n}{n^{4/3}}\leq 8n^{-1/3}.
\end{equation}

Now we turn to $\hat{I}^{(n)}_E(t)$. Let $\hat{I}^{(n)}_E(i,t)$ be the infected edges of vertex $i$ at time $t$. If $i$ is not infected at time $t$ then $\hat{I}^{(n)}_E(i,t)=0$. 
We can classify the change of $\hat{I}^{(n)}_E(t)$ as follows.
\begin{itemize}
	\item Change of $\hat{I}^{(n)}_E(t)$ due to recovery Poisson processes is  $-\hat{I}^{(n)}_E(i,t)$ if vertex $i$ recovers. The total rate of change is equal to
	\begin{equation*}
    	-\frac{n}{\lambda \hat{I}^{(n)}_E(t) } \sum_{i=1}^n  \gamma \hat{I}^{(n)}_E(i,t) = -\frac{n\gamma}{\lambda}.
  	\end{equation*}
	\item Change of $\hat{I}^{(n)}_E(t)$ due to rewiring/dropping Poisson processes is  $-1$ with probability $1-\alpha + \alpha(1-\hat{I}^{(n)}(t)/n)$, and 0 with probability $\alpha \hat{I}^{(n)}(t)/n$. 
	Thus, the mean of the change is $-(1-\alpha + \alpha(1-\hat{I}^{(n)}(t)/n))$.    The rate for such an event to occur is
	\begin{equation*}
    	\omega \hat{I}^{(n)}_E(t) \times \frac{n}{\lambda \hat{I}^{(n)}_E(t)}=\frac{n\omega}{\lambda}.
  	\end{equation*}
	\item Change of  $\hat{I}^{(n)}_E(t)$ due to Infection Poisson processes is equal to
	\begin{equation*}
    	-1+\mbox{Poisson}\left(\frac{(\hat{S}^{(n)}(t)-1)\mu}{n}\right)+\mbox{Binomial}\left(\hat{W}^{(n)}(t),2/\hat{S}^{(n)}(t)\right)-\mbox{Binomial}\left(\hat{I}^{(n)}_E(t)-1, \frac{1}{\hat{S}^{(n)}(t)}\right).
  	\end{equation*}
	To see this, recall the description of the  Infection Poisson Processes in the third bullet of the construction of the SIR-$\omega$ model in Section \ref{s:construct}. The first term `-1' is because each infection will cost an infected edge to infect the susceptible vertex. The appearance of the second and third terms come from \eqref{infpoi}. The last term is due to that  each infected edge is deleted with probability $1/\hat{S}^{(n)}(t)$ in each infection event. 
	
	The mean of the change is 
	\begin{equation*}
    	-1+  \frac{(\hat{S}^{(n)}(t)-1)\mu}{n}+\frac{2\hat{W}^{(n)}(t)}{\hat{S}^{(n)}(t)} - \frac{\hat{I}^{(n)}_E(t)-1}{\hat{S}^{(n)}(t)}.
  	\end{equation*}
	The rate for such an event to occur is 
	\begin{equation*}
    	\lambda \hat{I}^{(n)}_E(t) \times \frac{n}{\lambda \hat{I}^{(n)}_E(t)}=n.
  	\end{equation*}
\end{itemize}

We now decompose $\hat{I}^{(n)}_E(t)$ into the drift part and martingale part. 
\begin{equation}\label{hatiEeq}
	\begin{split}
		\hat{I}^{(n)}_E(t)=&\hat{I}^{(n)}_E(0) +\int_0^t   \left( -\frac{n\gamma}{\lambda} \right)\mathrm{d}u + \int_0^t   \frac{n\omega}{\lambda}\left(  -\left(1-\alpha + \alpha\left(1-\frac{\hat{I}^{(n)}(u)}{n}\right) \right)  \right) \mathrm{d}u\\
		& +\int_0^t n\left( -1+  \frac{(\hat{S}^{(n)}(u)-1)\mu}{n}+\frac{2\hat{W}^{(n)}(u)}{\hat{S}^{(n)}(u)} - \frac{\hat{I}^{(n)}_E(u)-1}{\hat{S}^{(n)}(u)} \right)\mathrm{d}u+M^{(n)}_4(t). 
	\end{split}
\end{equation}
Similarly to the analysis for the martingales $M^{(n)}_1,M^{(n)}_2, M^{(n)}_3$, we can show that
\begin{equation}\label{m4bd}
	\P\left(\sup_{t \leq \hat{\tau}^{(n)}\wedge ((\hat{s}(0)+t_*)/2) }\abs{M^{(n)}_4(t)} > n^{2/3} \right) \leq C'n^{-1/3}.
\end{equation}
We deferred the detailed proof of \eqref{m4bd} to Appendix \ref{sec:ap1}.

\subsubsection{Proof of \eqref{eq:hatx} for $\hat{I}^{(n)}$ and $\hat{I}^{(n)}_E$}\label{subsub4.2.2}

For any $\delta>0$, we can define $\hat{\tau}^{(n)}_{\delta}$ by
\begin{equation*}
	\hat{\tau}^{(n)}_{\delta}=\inf\{t\geq 0: \hat{I}^{(n)}_E(t)\leq \delta n\}.
\end{equation*}
Let 
\begin{equation*}
	\hat{\mathbf{Y}}^{(n)}(t):=(\hat{I}^{(n)}(t),\hat{I}^{(n)}_E(t));
\end{equation*}
\begin{equation*}
	\hat{\mathbf{y}}(t):=(\hat{i}(t),\hat{i}_E(t)).
\end{equation*}
In order to prove \eqref{eq:hatx}, we first observe the following weaker version of it: 
\begin{equation}\label{eq:haty}
	\sup_{0\leq t\leq t_* \wedge  \hat{\tau}^{(n)}_{\delta}}\abs{\hat{\mathbf{Y}}^{(n)}(t)/n-\hat{\mathbf{y}}(t)}\CP 0.
\end{equation}
We defer the proof of  \eqref{eq:haty} to Appendix \ref{app:haty}. To prove  the original version \eqref{eq:hatx}, we need the following two lemmas. 

\begin{lemma}\label{lem:taudelta}
	Consider the case of positive initial condition. For any $\ep>0$, there exists a $\delta_* \in (0,\hat{i}_E(0))$, s.t. for  any $\delta<\delta_*$, we have 
	\begin{equation*}
		\lim_{n\to\infty}\P(\hat{\tau}^{(n)}_\delta\geq t_*-\ep)=1.
	\end{equation*} 
\end{lemma}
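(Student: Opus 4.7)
The plan is to derive Lemma \ref{lem:taudelta} directly from the weaker convergence \eqref{eq:haty} already established for positive initial condition, together with the strict positivity of $\hat{i}_E$ on $[0,t_*)$. Without loss of generality we may take $0<\ep<t_*$, since for larger $\ep$ the conclusion is vacuous. By the very definition of $t_*$ as the first zero of $\hat{i}_E$ after $0$, and since $\hat{i}_E(0)>0$ under the positive initial condition, continuity forces $\hat{i}_E(t)>0$ for every $t\in[0,t_*)$. Therefore the continuous function $\hat{i}_E$ attains a strictly positive minimum
$$
m:=\min_{0\le t\le t_*-\ep}\hat{i}_E(t)>0
$$
on the compact interval $[0,t_*-\ep]$. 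Since $0\in[0,t_*-\ep]$ we have $m\le\hat{i}_E(0)$, so setting $\delta_*:=m/2$ ensures $\delta_*\in(0,\hat{i}_E(0))$ as required by the statement.

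Now fix any $\delta<\delta_*$. Convergence \eqref{eq:haty} says that the event
$$
A_n:=\Big\{\sup_{0\le t\le t_*\wedge\hat{\tau}_\delta}\abs{\hat{I}_E(t)/n-\hat{i}_E(t)}<\delta\Big\}
$$
has probability tending to $1$ as $n\to\infty$. We claim that on $A_n$ one has $\hat{\tau}_\delta\ge t_*-\ep$, which then proves the lemma. To see the claim, suppose for contradiction that $\hat{\tau}_\delta<t_*-\ep$. Evaluating the bound defining $A_n$ at the stopping time $t=\hat{\tau}_\delta$, which lies in the interval $[0,t_*\wedge\hat{\tau}_\delta]$ over which the supremum is taken, yields
$$
\frac{\hat{I}_E(\hat{\tau}_\delta)}{n}>\hat{i}_E(\hat{\tau}_\delta)-\delta\ge m-\delta>2\delta-\delta=\delta,
$$
where we used $\hat{\tau}_\delta\in[0,t_*-\ep]$ together with $\delta<m/2$. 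This contradicts the very definition of $\hat{\tau}_\delta$ as the first time at which $\hat{I}_E/n\le\delta$.

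This argument is essentially soft. The nontrivial content has already been absorbed into the existence of the positive minimum $m$, which in turn uses Theorem \ref{exiuni}, and into \eqref{eq:haty}; what remains is a purely deterministic comparison combined with a first-passage contradiction, so we do not foresee a genuine obstacle here. The role of this lemma is precisely to upgrade \eqref{eq:haty}, whose supremum runs only up to the random time $\hat{\tau}_\delta$, into control of $\hat{I}_E/n$ on the deterministic interval $[0,t_*-\ep]$; this is exactly what will be needed in the subsequent step to drive the full convergence \eqref{eq:hatx} on the remaining interval $[t_*-\ep,t_*]$ by combining smallness of $\hat{i}_E$ near $t_*$ with the bounded upward drift of $\hat{I}_E$.
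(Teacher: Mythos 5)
Your proof is correct and takes essentially the same route as the paper: define the positive minimum of $\hat{i}_E$ on $[0,t_*-\ep]$, choose $\delta_*$ below half of it, and use \eqref{eq:haty} evaluated at the stopping time $\hat{\tau}_\delta$ to reach a first-passage contradiction on the high-probability event. The paper phrases the contradiction by splitting into two complementary events (one of vanishing probability, one shown to be empty), but this is the same argument.
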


\begin{proof}[Proof of Lemma \ref{lem:taudelta}]
	Observe that for any $\ep>0$, we have 
	\begin{equation}\label{infie}
		a:= \inf_{0\leq t\leq t_*-\ep}\hat{i}_E(t)>0. 
	\end{equation}
	We now set
	\begin{equation*}
		\delta<\delta_*:=\min\left\{\frac{a}{2},\frac{\hat{i}_E(0)}{2} \right\}.
	\end{equation*}
	Note that, if $\hat{\tau}^{(n)}_\delta\leq t_*-\ep$, then 
	\begin{equation*}
    \hat{\tau}^{(n)}_\delta\wedge  t_*=\hat{\tau}^{(n)}_\delta.
  \end{equation*}
	Hence, by \eqref{eq:haty}, 
	\begin{equation}\label{a/2}
		\begin{split}
			\lim_{n\to\infty}\P\left(\hat{\tau}^{(n)}_\delta\leq t_*-\ep,\ \abs{\frac{\hat{I}^{(n)}_E(\hat{\tau}^{(n)}_\delta)}{n}-{\hat{i}_E(\hat{\tau}^{(n)}_\delta) }}\geq \frac{a}{2} \right)=0.
		\end{split}
	\end{equation} 
	On the other hand, we claim that  the event 
	\begin{equation}\label{taundelta}
		\Omega_{\ref{taundelta}}=    \left\{\hat{\tau}^{(n)}_\delta\leq t_*-\ep,\ \abs{\frac{\hat{I}^{(n)}_E(\hat{\tau}^{(n)}_\delta)}{n}-{\hat{i}_E(\hat{\tau}^{(n)}_{\delta} }}< \frac{a}{2} \right\}
	\end{equation}
	is empty. Indeed, by \eqref{infie} and the definition of $\hat{\tau}^{(n)}_{\delta}$ and $\delta_*$, on $\Omega_{\ref{taundelta}}$,
	\begin{equation*}
		\frac{\hat{I}^{(n)}_E(\hat{\tau}^{(n)}_\delta)}{n} \leq \delta \leq \frac{a}{2} \mbox{ and } \hat{i}_E(\hat{\tau}^{(n)}_{\delta} ) \geq a.
	\end{equation*}
	Consequently, 
	\begin{equation*}
    	\abs{\frac{\hat{I}^{(n)}_E(\hat{\tau}^{(n)}_\delta)}{n}-{\hat{i}_E(\hat{\tau}^{(n)}_\delta) }}\geq a-\frac{a}{2}= \frac{a}{2}.
  	\end{equation*}
	Hence,
	\begin{equation}\label{2a/2}
		\Omega_{\ref{taundelta}}=\emptyset. 
	\end{equation}
	Combining \eqref{a/2} and \eqref{2a/2},
	\begin{equation*}
		\begin{split}
			\lim_{n\to\infty}\P\left(\hat{\tau}^{(n)}_\delta\leq t_*-\ep\right)=
			\lim_{n\to\infty}\P\left(\hat{\tau}^{(n)}_\delta\leq t_*-\ep, \ \abs{\frac{\hat{I}^{(n)}_E(\hat{\tau}^{(n)}_\delta)}{n}-{\hat{i}_E(\hat{\tau}^{(n)}_\delta) }}\geq \frac{a}{2} \right)=0,
		\end{split}
	\end{equation*}
	as desired. 
\end{proof}

We will also need the following Lemma \ref{lem:i=0}, whose proof is deferred to Appendix \ref{sec:ap2}. 

\begin{lemma}\label{lem:i=0}
	We have the convergence
	\begin{equation*}
		\lim_{t\to t_*} \hat{i}(t)=0.  
	\end{equation*}
\end{lemma}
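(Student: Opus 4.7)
The plan is to prove Lemma \ref{lem:i=0} by contradiction, exploiting a fundamental incompatibility between two facts: on one hand, $\hat{i}_E$ vanishes no faster than linearly at $t_*$, so $1/\hat{i}_E$ has a non-integrable singularity there; on the other hand, the ODE \eqref{hati} forces $\hat{i}/\hat{i}_E$ to be integrable over $[0, t_*)$. If $\hat{i}(t_*)$ were strictly positive, these two facts would clash.

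First I would collect two preliminary ingredients. Non-negativity $\hat{i}(t) \geq 0$ on $[0, t_*)$ follows either from the limit construction in Section \ref{sec:eq=0} or, in the positive initial condition case, directly from \eqref{hati}: a would-be first downward crossing $\hat{i}(t_0) = 0$ yields $\hat{i}'(t_0) = 1 > 0$ (using $\hat{i}_E(t_0) > 0$), a contradiction. Integrating \eqref{hati} gives
\begin{equation}
\hat{i}(t) = \hat{i}(0) + t - \frac{\gamma}{\lambda}\int_0^t \frac{\hat{i}(s)}{\hat{i}_E(s)}\,ds,
\end{equation}
so combining $\hat{i} \geq 0$ with monotone convergence as $t \uparrow t_*$ yields the integrability bound
\begin{equation}
\int_0^{t_*}\frac{\hat{i}(s)}{\hat{i}_E(s)}\,ds \;\leq\; \frac{\lambda}{\gamma}\bigl(\hat{i}(0) + t_*\bigr) \;<\; \infty,
\end{equation}
and also shows that $\hat{i}(t)$ admits a finite limit at $t_*$. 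The second ingredient is a linear upper bound $\hat{i}_E(t) \leq C(t_* - t)$ near $t_*$. I would deduce this from a uniform bound $\abs{\hat{i}_E'} \leq C$ on $[0, t_*]$, obtained by inspecting each term on the right side of \eqref{hati_E}: $\hat{s}(t) = \hat{s}(0) - t$ is bounded away from $0$ since $t_* < \hat{s}(0)$ was shown at the start of Section \ref{sec:exiuni}, $\hat{w}$ is bounded via the closed form \eqref{wsolve}, and $\hat{i}$ is bounded because $\hat{i}' \leq 1$. Combined with $\hat{i}_E(t_*) = 0$, this gives the claimed linear bound on a left neighborhood of $t_*$.

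With these ingredients in hand, suppose for contradiction that $\lim_{t \to t_*}\hat{i}(t) = c > 0$. Continuity supplies some $\delta > 0$ with $\hat{i}(s) \geq c/2$ on $(t_* - \delta, t_*)$, whence
\begin{equation}
\int_{t_* - \delta}^{t_*}\frac{\hat{i}(s)}{\hat{i}_E(s)}\,ds \;\geq\; \frac{c}{2C}\int_{t_* - \delta}^{t_*}\frac{ds}{t_* - s} \;=\; +\infty,
\end{equation}
contradicting the integrability established above; hence $\hat{i}(t_*) = 0$. The main technical point is the uniform Lipschitz bound on $\hat{i}_E$, but since every quantity appearing on the right of \eqref{hati_E} remains bounded on the compact interval $[0, t_*]$ and $\hat{s}$ is uniformly bounded away from $0$ there, this reduces to a routine inspection rather than a genuine difficulty.
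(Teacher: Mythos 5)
Your proof is correct and uses essentially the same mechanism as the paper's: the uniform Lipschitz bound $\abs{\hat{i}_E'}\le C$ giving $\hat{i}_E(t)\le C(t_*-t)$, followed by the non-integrable logarithmic singularity of $\int \frac{du}{t_*-u}$ to produce a contradiction when $\hat{i}$ is bounded below near $t_*$. The only real difference is in packaging: the paper runs the contradiction from $\limsup_{t\to t_*}\hat{i}(t)=a>0$ directly, using $\hat{i}'\le 1$ to upgrade the $\limsup$ to a pointwise lower bound $\hat{i}>a/2$ on a left neighborhood of $t_*$ and then letting $\hat{i}(t)\to-\infty$; you instead first prove $\int_0^{t_*}\hat{i}/\hat{i}_E<\infty$ and that the limit $\hat{i}(t_*-)$ exists (from $\hat{i}\ge 0$ plus integrating \eqref{hati}), and then contradict integrability. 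Your version is slightly more self-contained in that it makes explicit the non-negativity $\hat{i}\ge 0$ needed to convert $\limsup\le 0$ into $\lim=0$, a point the paper leaves implicit, but the underlying idea and key estimates are the same.
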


We now come back to the proof of \eqref{eq:hatx}. Given any $\ep'>0$, we need to prove
\begin{equation}\label{0yI}
	\lim_{n\to\infty}\P\left( \sup_{0\leq t\leq t_*\wedge \hat{\tau}^{(n)}}\abs{\frac{\hat{I}^{(n)}(t)}{n}-\hat{i}(t)}>\ep' \right)=0,
\end{equation}
and 
\begin{equation}\label{0yIE}
	\lim_{n\to\infty}\P\left( \sup_{0\leq t\leq t_*\wedge \hat{\tau}^{(n)}}\abs{\frac{\hat{I}^{(n)}_E(t)}{n}-\hat{i}_E(t)} >\ep'\right)=0.
\end{equation}

\begin{proof}[Proof of \eqref{0yI} and \eqref{0yIE}] 
  Given the $\ep'>0$, we let $\ep>0$ be some number to be determined. By Lemma \ref{lem:taudelta},  there exists a $\delta>0$ such that the event $\{\hat{\tau}^{(n)}_{\delta}\geq t_*-\ep\}$ holds with high probability. On this event, 
	\begin{equation*}
		[0,t_*\wedge \hat{\tau}^{(n)}] = [0,t_*\wedge\hat{\tau}^{(n)}_{\delta}] \cup [t_*-\epsilon,t_*\wedge\hat{\tau}^{(n)}]. 
	\end{equation*}
	Consequently,
	\begin{equation*}
		\begin{split}
			& \sup_{0\leq t\leq t_*\wedge \hat{\tau}^{(n)}}\abs{\frac{\hat{I}^{(n)}(t)}{n}-\hat{i}(t)} \\
			\leq & \sup_{0\leq t\leq t_*\wedge\hat{\tau}^{(n)}_{\delta}}\abs{\frac{\hat{I}^{(n)}(t)}{n}-\hat{i}(t)} + \mbox{Osc}\left[\frac{\hat{I}^{(n)}(t)}{n},[t_*-\ep, t_*\wedge\hat{\tau}^{(n)}]\right]\\ 
			&+\mbox{Osc}[\hat{i}(t),[t_*-\ep, t_*]]\\
			:= & J_1^{(n)}+J_2^{(n)}+J_3^{(n)}. 
		\end{split}
	\end{equation*}
	Here Osc stands for the oscillation of a function, as defined in \eqref{defosc}. 
	
	Our goal is to find an $\ep>0$ so that 
	\begin{equation}\label{yI}
		\lim_{n\to\infty}\P(J_1^{(n)}>\ep'/3)=\lim_{n\to\infty}\P(J_2^{(n)}>\ep'/3)=0 \mbox{ and } J_3^{(n)}\leq \ep'/3,
	\end{equation}
	Once \eqref{yI} is proved, \eqref{0yI} follows by the union bound. 
	
	Equation \eqref{ytightness} already implies that $\P(J_1^{(n)}>\ep'/3)\to 0$ for any fixed $\ep$ and, $\ep'$. By Lemma \ref{lem:i=0}, $J_3^{(n)}\leq \ep'/3$ provided that $\ep$ is small enough. Therefore,  to prove \eqref{yI}, it remains to estimate $J_2^{(n)}$.  For $t_1<t_2\leq t_*\wedge \hat{\tau}^{(n)}$,
	\begin{equation*}
		\begin{split}
			\frac{\hat{I}^{(n)}(t_2)}{n}-\frac{\hat{I}^{(n)}(t_1)}{n} \leq \int_{t_1}^{t_2} \left(1-\frac{\gamma\hat{I}^{(n)}(u)}{\lambda\hat{I}^{(n)}_E(u)} \right)\mathrm{d}u +\frac{2\sup_{t\leq \hat{\tau}^{(n)} \wedge 2} \abs{M^{(n)}_3(t)}}{n}.
		\end{split}
	\end{equation*}
	Thus, for any $t \in [t_*-\ep, t_*\wedge\hat{\tau}^{(n)}]$,
	\begin{equation}\label{1yI}
		\frac{\hat{I}^{(n)}(t)}{n} \leq \ep +2n^{-1}\sup_{t\leq \hat{\tau}^{(n)} \wedge t_*} \abs{M^{(n)}_3(t)}+\abs{\frac{\hat{I}^{(n)}(t_*-\ep)}{n}}.
	\end{equation}
	Repeating the derivation of \eqref{wdd}, one sees that the right-hand side of \eqref{1yI} is smaller than $\ep'/3$ with high probability for large $n$ and small $\ep$. This implies that $\lim_{n\to\infty}\P(J_2^{(n)}>\ep'/3)=0$ for $\ep$ small and completes the proof of \eqref{0yI}. Equation \eqref{0yIE} can be proved similarly.  
\end{proof}

Equation \eqref{eq:hatx} in the	case of positive initial condition now follows from \eqref{0yI} and \eqref{0yIE}. 

For future references we record the following corollary of Lemma \ref{lem:taudelta}. Its proof is deferred to Appendix \ref{sec:ap3}. 

\begin{corollary}\label{cor:lbtau}
	Suppose $\hat{i}(0)>0$ and $\hat{i}_E(0)>0$. For any $\ep>0$,
	\begin{equation}\label{eq:lbtau}
		\lim_{n\to\infty}\P(\hat{\tau}^{(n)}\geq t_*-\ep)=1.
	\end{equation}
	Consequently, for any $\ep>0$, we have 
	\begin{equation}\label{sizelb}
		\P\left(\Lambda^{(n)}\geq (1-\hat{s}(0)+t_*-\ep)n\right)=1.
	\end{equation}
\end{corollary}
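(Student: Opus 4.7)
The plan is to deduce both statements directly from Lemma \ref{lem:taudelta} together with the uniform convergence of $\hat{S}(t)/n$ established in \eqref{slim2}. The first assertion \eqref{eq:lbtau} is essentially a rephrasing of Lemma \ref{lem:taudelta} once one notices the obvious inequality $\hat{\tau}_{\delta}\leq \hat{\tau}$, valid for every $\delta>0$ because $\hat{I}_E$ must pass through the level $\delta n$ before reaching $0$. Thus $\{\hat{\tau}_\delta\geq t_*-\ep\}\subseteq\{\hat{\tau}\geq t_*-\ep\}$, and by picking any $\delta<\delta_*$ provided by Lemma \ref{lem:taudelta},
\begin{equation*}
\P(\hat{\tau}\geq t_*-\ep)\geq \P(\hat{\tau}_\delta\geq t_*-\ep)\to 1.
\end{equation*}

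For the second assertion, the key identity is $T^{(n)}=n-\hat{S}(\hat{\tau})$, since once $\hat{I}_E$ reaches $0$ no further infections can occur. The plan is then to bound $\hat{S}(\hat{\tau})/n$ from above using \eqref{slim2}, which gives
\begin{equation*}
\sup_{t\leq \hat{s}(0)\wedge\hat{\tau}}\left|\frac{\hat{S}(t)}{n}-(\hat{s}(0)-t)\right|\CP 0.
\end{equation*}
Note that $\hat{S}$ decreases deterministically by $1$ at each infection event after time-change, so $\hat{\tau}\leq \hat{s}(0)$ holds automatically (otherwise $\hat{S}$ would go negative), and therefore \eqref{slim2} may be evaluated at $t=\hat{\tau}$.

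To combine these, fix $\ep>0$ and apply the first part with $\ep/2$ in place of $\ep$: with probability tending to $1$, $\hat{\tau}\geq t_*-\ep/2$. Simultaneously, with probability tending to $1$,
\begin{equation*}
\frac{\hat{S}(\hat{\tau})}{n}\leq \hat{s}(0)-\hat{\tau}+\frac{\ep}{2}\leq \hat{s}(0)-t_*+\ep.
\end{equation*}
Rearranging gives $T^{(n)}/n\geq 1-\hat{s}(0)+t_*-\ep$ on the intersection of these two high-probability events, so $\lim_{n\to\infty}\P(T^{(n)}\geq(1-\hat{s}(0)+t_*-\ep)n)=1$, as claimed. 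There is no genuine obstacle here: everything reduces to Lemma \ref{lem:taudelta} and \eqref{slim2}, with the only small point of attention being the automatic bound $\hat{\tau}\leq \hat{s}(0)$ that legitimates evaluating the uniform estimate at the random terminal time.
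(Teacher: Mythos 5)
Your argument is correct and follows essentially the same route as the paper: part one is deduced from Lemma \ref{lem:taudelta} via the trivial inclusion $\hat{\tau}_\delta\leq\hat\tau$, and part two from $T^{(n)}=n-\hat S(\hat\tau)$ together with the uniform convergence of $\hat S/n$. The only cosmetic difference is that you evaluate the estimate at the random time $\hat\tau$ directly, whereas the paper evaluates at the deterministic time $t_*-\ep$ and then uses monotonicity of $\hat S$ to pass from $\hat S(t_*-\ep)$ to $\hat S(\hat\tau)$; both are legitimate. One small caveat on your side remark: the claim that ``$\hat\tau\leq\hat s(0)$ holds automatically, otherwise $\hat S$ would go negative'' is not quite right as stated, since after the time change infections arrive at Poisson rate $n$ rather than deterministically one per unit time, so this is a high-probability statement, not a pathwise identity. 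Fortunately the remark is not needed: the estimate \eqref{slim2} is stated with supremum over all $t\leq\hat\tau$ (with no cap at $\hat s(0)$), so you may evaluate at $t=\hat\tau$ without that bound.
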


\subsection{Proof of \eqref{eq:hatx} for zero initial condition}\label{sec:ie=0}

Conditionally on the event that a major outbreak occurs,  we may assume that for some $\ep_0>0$, $\Lambda^{(n)}>2\ep_0 n$. We claim that, with high probability, $\hat{\tau}^{(n)}\geq \ep_0$. Indeed, by \eqref{hats},
\begin{equation*}
	\begin{split}
		\lim_{n\to\infty} \P\left(\Lambda^{(n)}>2\ep_0 n, \hat{\tau}^{(n)}<\ep_0 \right) \leq &\lim_{n\to\infty}\P(\hat{S}^{(n)}(\hat{\tau}^{(n)})<(1-2\ep)n,  \hat{\tau}^{(n)}<\ep_0)\\
		\leq &\lim_{n\to\infty}\P\left(\abs{\hat{S}^{(n)}(\hat{\tau}^{(n)})-(1-\hat{\tau}^{(n)})n }\geq \ep n \right)=0.
	\end{split}
\end{equation*}

We need the following rough bounds for $\hat{I}^{(n)}(t)$ and $\hat{I}^{(n)}_E(t)$.

\begin{lemma}\label{rul}
	By lowering the value of $\ep_0$ if needed, we can find two constants $C_{\ref{rub}}$ and $C_{\ref{rlb}}$ such for any $\ep>0$,
	\begin{equation}\label{rub}
		\lim_{n\to\infty} \P\left(\hat{I}^{(n)}(t)+\hat{I}^{(n)}_E(t)\leq (C_{\ref{rub}}t+\ep)n,\  \forall t\leq \ep_0 \wedge \hat{\tau}^{(n)}\right)=1,
	\end{equation}
	and 
	\begin{equation}\label{rlb}
		\lim_{n\to\infty} \P\left(\hat{I}^{(n)}_E(t)\geq (C_{\ref{rlb}}t-\ep)n,\  \forall t\leq \ep_0 \wedge \hat{\tau}^{(n)}\right)=1.
	\end{equation}
\end{lemma}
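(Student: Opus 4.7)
The plan is to exploit the Doob decompositions \eqref{hatieq} and \eqref{hatiEeq}, the martingale tail bounds \eqref{m3bd}, \eqref{m4bd}, and the already established approximations \eqref{slim2} and \eqref{eq:hatsw} (which apply under the zero initial condition as well, since their proofs only require working up to the terminal time~$\hat{\tau}$). Throughout we assume, up to lowering $\ep_0$, that $\ep_0$ is smaller than any fixed constant we will need.

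For \eqref{rub}, I would bound the integrand in \eqref{hatieq} crudely by $n$ (since the term $-\gamma \hat{I} n/(\lambda \hat{I}_E)$ is nonpositive), which gives
\begin{equation*}
\hat{I}(t) \leq \hat{I}(0) + nt + M_3(t) \leq 1 + nt + \sup_{u\leq \hat{\tau}\wedge 2}\abs{M_3(u)}.
\end{equation*}
For $\hat{I}_E$, on the event where $\hat{S}(u)/n\geq 1/2$ and $\hat{W}(u)/n \leq \hat{w}(u)+\ep \leq C u^2\log(1/u)+\ep$ for $u\leq \ep_0\wedge\hat{\tau}$ (which has probability tending to $1$ by \eqref{slim2} and \eqref{eq:hatsw}), dropping negative drift contributions and using $\hat{S}\leq n$ in \eqref{hatiEeq} gives a drift bounded above by $C_1 n$ for some constant $C_1$ depending only on the parameters. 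Together with \eqref{m4bd} this yields $\hat{I}_E(t)\leq \hat{I}_E(0) + C_1 nt + \sup_{u\leq \hat{\tau}\wedge 2}|M_4(u)|$. Combining the two bounds and absorbing the $n^{2/3}$ martingale fluctuations (which are $\leq \ep n/2$ with probability $\to 1$) yields \eqref{rub} with, e.g., $C_{\ref{rub}} = C_1+1$.

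For \eqref{rlb}, the key observation is that, under the zero initial condition, the drift of $\hat{I}_E$ at $t=0$ equals $n\bigl(\mu - 1 - (\gamma+\omega)/\lambda\bigr) = n(\mu-1)(1-\lambda_c/\lambda) =: 2C_{\ref{rlb}}\, n > 0$ because of the assumption $\lambda>\lambda_c$. I would then argue that this positive drift persists throughout $[0,\ep_0]$ provided $\ep_0$ is small enough. Concretely, on the good event where $\hat{S}(u)/n \geq 1-2u$, $\hat{W}(u)/n \leq Cu^2\log(1/u)+\ep$, and the bound \eqref{rub} already holds (all with probability $\to 1$), each correction term in the integrand of \eqref{hatiEeq} --- namely $(n-\hat{S})\mu/n$, $2\hat{W}/\hat{S}$, $\hat{I}_E/\hat{S}$, and $\omega\alpha\hat{I}/(\lambda n)$ --- is $O(u)+O(\ep)$. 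For $\ep_0$ and $\ep$ small enough the drift is therefore at least $C_{\ref{rlb}}n$ throughout $[0,\ep_0\wedge\hat{\tau}]$, and \eqref{m4bd} together with the crude bound $\hat{I}_E(0)\geq 0$ gives $\hat{I}_E(t)\geq C_{\ref{rlb}}\,nt - n^{2/3}$ uniformly on this interval with probability tending to one, which implies \eqref{rlb}.

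The main (mild) obstacle is making sure that \eqref{eq:hatsw} and the associated control on $\hat{W}$ are legitimately available before Lemma \ref{rul} is used; since those were proved in Section~4.1 without restrictions on the initial condition, this is not a circularity. The only delicate point is that the drift bounds for $\hat{I}_E$ must be valid at the instant $t=0$ where $\hat{I}_E$ itself is zero, but this is harmless because the singular term in \eqref{hatiEeq} is $\hat{I}_E/\hat{S}$, which vanishes at $t=0$, not the $\hat{I}/\hat{I}_E$ term that appears in the equation for $\hat{I}$.
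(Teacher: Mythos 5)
Your argument is correct and follows essentially the same route as the paper: decompose $\hat{I}$ and $\hat{I}_E$ via \eqref{hatieq}--\eqref{hatiEeq}, kill the martingale parts with \eqref{m3bd}--\eqref{m4bd}, bound the drift crudely for \eqref{rub}, and then feed \eqref{rub} back into the drift of $\hat{I}_E$ and use $\lambda>\lambda_c$ (so $\mu-1-(\gamma+\omega)/\lambda>0$) to get the linear lower bound \eqref{rlb}. The only cosmetic difference is that you invoke the finer approximation $\hat{W}(u)/n\approx\hat{w}(u)$ from \eqref{eq:hatsw} to show $2\hat{W}/\hat{S}$ is small, whereas the paper simply uses the deterministic bound $\hat{W}\leq A_e\leq 2\mu n$ (which is in force throughout) together with $\hat{S}(u)\geq(1-\ep-u)n$; both give the same conclusion, yours with a tighter constant $C_{\ref{rub}}$ but requiring a bit more machinery.
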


\begin{proof}[Proof of Lemma \ref{rul}]
	Define the events
	\begin{equation}\label{rul1}
		\Omega_{\ref{rul1}}=\{ \sup_{0\leq t\leq \hat{\tau}^{(n)}\wedge t_*} \left(\abs{M^{(n)}_3(t)}+\abs{M^{(n)}_4(t)}\right)\leq \ep n/4 \},
	\end{equation}
	and 
	\begin{equation}\label{rul2}
		\Omega_{\ref{rul}}=\{\hat{S}^{(n)}(t)\geq (1-\ep-t)n, \ \forall t\leq 1\wedge \hat{\tau}^{(n)}\}.
	\end{equation}
	Using \eqref{m3bd}, \eqref{m4bd} and the assumption 
  	\begin{equation*}
    	\frac{\hat{I}^{(n)}(0)}{n}\to \hat{i}(0)=0, \quad \frac{\hat{I}^{(n)}_E(0)}{n}\to \hat{i}_E(0)=0,
  	\end{equation*}
	we see that
	\begin{equation*}
    	\lim_{n\to\infty} \P(\Omega_{\ref{rul1}}) = \lim_{n\to\infty} \P(\Omega_{\ref{rul2}})=1.
  	\end{equation*}
	Define 
	\begin{equation}\label{omeganew}
		\Omega_{\ref{omeganew}}=\Omega_{\ref{rul1}} \cap \Omega_{\ref{rul2}}.
	\end{equation}
	Then we have $\P(\Omega_{\ref{omeganew}})\to 1$. On $\Omega_{\ref{omeganew}}$, using \eqref{hatieq}, we get
	\begin{equation*}
    	\hat{I}^{(n)}(t) \leq \hat{I}^{(n)}(0)+t+M^{(n)}_3(t)\leq t+\ep n,
  	\end{equation*}
	and 
	\begin{equation*}
		\begin{split}
			\hat{I}^{(n)}_E(t)&\leq  \hat{I}^{(n)}_E(0)+n\int_0^t\left(
			\mu+\frac{2\hat{W}^{(n)}(u)}{\hat{S}^{(n)}(u)}
			\right) \mathrm{d}u+M^{(n)}_4(t)\\
			&\leq \ep n+n \int_0^t \left(\mu+\frac{4\mu }{1-u-\ep} \right)\mathrm{d}u\\
			& \leq \ep n+9\mu n t,
		\end{split}
	\end{equation*}
	provided that  $\ep$ and $t$ are both smaller than $1/4$.  This proves \eqref{rub}. To show \eqref{rlb}, note that
	\begin{equation*}
		\hat{I}^{(n)}_E(t)\geq n\int_0^t \left(  -\frac{\gamma}{\lambda}-\frac{\omega}{\lambda}-1+\mu \frac{ \hat{S}^{(n)}(u)-1}{n}-\frac{\hat{I}^{(n)}_E(u)-1}{\hat{S}^{(n)}(u)}\right)\mathrm{d}u+M^{(n)}_4(t).
	\end{equation*}
	On $\Omega_{\ref{omeganew}}$,  $\hat{S}^{(n)}(u)\geq (1-\ep-t)n$. In addition,  $\hat{I}^{(n)}_E(t)\leq (9\mu t+\ep)n$, as we have just shown. Hence, for sufficiently small $\ep$ and $t$,
	\begin{equation*}
		\begin{split}
			\hat{I}^{(n)}_E(t) &\geq n\left( \left(-\frac{\gamma+\omega+\lambda}{\lambda}+\mu-\mu \ep\right)t- \frac{\mu t^2}{2}-\int_0^t \frac{9\mu t+\ep}{1/2}\mathrm{d}u  \right)-\ep n\\
			& \geq n\left(-\frac{\gamma+\omega+\lambda}{\lambda}+\mu\right)t/2-2\ep n,
		\end{split}
	\end{equation*}
	which proves \eqref{rlb}. 
\end{proof}

Let $\ep_0$ be small enough so that Lemma \ref{rul} holds. Define 
\begin{equation*}
  	\hat{\tau}^{(n)}_{\delta,\ep_0}:=\inf\{t\geq \ep_0: \hat{I}^{(n)}_E(t)\leq \delta n\}.
\end{equation*}

We now prove the tightness of $\{(\hat{I}^{(n)}(t)/n,\hat{I}^{(n)}_E(t)/n),\  t\leq \hat{\tau}^{(n)}_{\delta,\ep_0}\wedge t_*\}$. 

\begin{lemma}\label{lem:deltaep0}
	For any fixed $\ep',\delta_1>0$, there exist  $\theta>0$ and  $n_0>0$ so that for $n\geq n_0$,
	\begin{equation}\label{itight}
		\P\left (\sup_{\abs{t_1-t_2}\leq \theta, t_1<t_2\leq \hat{\tau}^{(n)}_{\delta,\ep_0} \wedge t_*  } \abs{ \frac{\hat{I}^{(n)}(t_1)}{n} - \frac{\hat{I}^{(n)} (t_2)} {n} }  \geq \delta_1 \right)
		\leq \ep',
	\end{equation}
	and 
	\begin{equation}\label{ietight}
		\P\left (\sup_{\abs{t_1-t_2}\leq \theta, t_1<t_2\leq
			\hat{\tau}^{(n)}_{\delta,\ep_0} \wedge t_* } \abs{ \frac{\hat{I}^{(n)}_E(t_1)}{n} - \frac{\hat{I}^{(n)}_E (t_2)} {n} }  \geq \delta_1 \right)
		\leq \ep'.
	\end{equation}
\end{lemma}

\begin{proof}[Proof of Lemma \ref{lem:deltaep0}]
	To prove \eqref{itight}, we assume that $\delta_1<\min\{\ep_0, C_{\ref{rub}}\ep_0\}$ and take 
	\begin{equation}\label{para1}
		\ep_1=\theta_1=\frac{\delta_1}{4C_{\ref{rub}}}.
	\end{equation}
	We divide $[0,\hat{\tau}^{(n)}_{\delta,\ep_0}\wedge t_*]$ into two sub-intervals: $B_1=[0,\ep_1]$ and $B_2=[\ep_1.\hat{\tau}^{(n)}_{\delta,\ep_0}\wedge t_*]$. Observe that if $t_1\in B_1$ and $\abs{t_1-t_2}\leq \theta $, then both $t_1$ and $t_2$ lie in the interval $[0,\ep_0]$. For $\ep=\delta_1/4$, we have 
	\begin{equation}\label{rub00}
		C_{\ref{rub}}(\ep_1+\theta_1)+\ep=\frac{3\delta_1}{4}.
	\end{equation}
	Using \eqref{rub} (with $\ep=\delta_1/4$) and \eqref{rub00}, we see that, for some $n_1$ large enough and all $n\geq n_1$,
	\begin{equation}\label{itight3}
		\P\left (\sup_{0<t_2-t_1\leq \theta_1, t_1\in B_1} \abs{ \frac{\hat{I}^{(n)}(t_1)}{n} - \frac{\hat{I}^{(n)}(t_2)} {n} }  \geq \delta_1 \right)\leq \P\left(  \sup_{t\leq \ep_1+\theta_1} \hat{I}^{(n)}(t) \geq \frac{3\delta_1}{4} n \right)\leq \frac{\ep'}{2}. 
	\end{equation}
	On the other hand, if both $t_1$ and $t_2$ are in the interval $B_2$, then 
	\begin{equation}\label{itight2}
		\lim_{n\to\infty} \P\left( \hat{I}^{(n)}_E(t)\geq 
		\min\left\{\frac{C_{\ref{rlb}}\ep_1}{2},  \delta \right\}n,\  \forall t\in
		[\ep_1,\hat{\tau}^{(n)}_{\delta,\ep_0}\wedge t_*]| \hat{\tau}^{(n)}>\ep_0
		\right)=1.
	\end{equation}
	Indeed, by \eqref{rlb} with $\ep=C_{\ref{rlb}}\ep_1/2$,
	\begin{equation}\label{itight-2}
		\lim_{n\to\infty} \P\left(\hat{I}^{(n)}_E(t)\geq (C_{\ref{rlb}}\ep_1- C_{\ref{rlb}}\ep_1/2)n,\  \forall \ep_1 \leq t\leq  \ep_0 | \hat{\tau}^{(n)}>\ep_0\right)=1.
	\end{equation}
	Thus, \eqref{itight2} follows from \eqref{itight-2} and 
	the definition of $\hat{\tau}^{(n)}_{\delta,\ep_0}$.

	By repeating the proof of \eqref{ytightness}, one can show that \eqref{itight2} implies that the sequence $$\{(\hat{I}^{(n)}(t)/n,\hat{I}^{(n)}_E(t)/n),\ \ep_1\leq t\leq \hat{\tau}^{(n)}_{\delta,\ep_0}\wedge t_*\}_{n\geq 1}$$ is a tight. Consequently, we can find some  $\theta_2$ and $n_2>0$ such that  for all $n\geq n_2$,
	\begin{equation}\label{itight4}
		\P\left (\sup_{0<t_2-t_1\leq \theta_2, t_1,t_2\in B_2} \abs{ \frac{\hat{I}^{(n)}(t_1)}{n} - \frac{\hat{I} (t_2)} {n} }  \geq \delta_1 \right)
		\leq \frac{\ep'}{2}.
	\end{equation}
	Equation \eqref{itight} now follows from \eqref{itight3}, \eqref{itight4} and the union bound by setting $$\theta=\min\{\theta_1,\theta_2\} \mbox{  and  } n_0=\max\{n_1,n_2\}.$$
	Equation \eqref{ietight}
	can be proved in the same way. 
	Hence, we have shown the tightness of the sequence $\{(\hat{I}^{(n)}(t)/n,\hat{I}^{(n)}_E(t)/n),\ 0\leq t\leq \hat{\tau}^{(n)}_{\delta,\ep_0} \wedge t_*\}$. 
\end{proof}

With the tightness established, we can proceed in the same way as the proof of \eqref{eq:hatx} in Section \ref{sec:ie>0} to deduce the uniform convergence of $(\hat{I}^{(n)}(t)/n, \hat{I}^{(n)}_E(t)/n)$ to $(\hat{i}(t),\hat{i}_E(t))$ for $t\leq t_*\wedge \hat{\tau}$. We omit the details. This completes the proof of \eqref{eq:hatx} for the case of $\hat{i}(0)=\hat{i}_E(0)=0$.

\begin{remark}
	Using the same proofs one can show that Lemma \ref{lem:i=0} and Lemma \ref{cor:lbtau}  also hold for the case of zero initial condition (for Corollary \ref{cor:lbtau} we need to condition on a major outbreak). 
\end{remark}

\section{Final epidemic size of the SIR-$\omega$ model}\label{sec:finalsize}

In this section we prove Theorem \ref{finalsize}. We only give a detailed proof for   part (i), equation \eqref{1-st*}, since the proof for part (ii) is identical.  By Corollary \ref{cor:lbtau}, for any $\ep>0$, with high probability $\Lambda^{(n)}\geq (1-\hat{s}(0)+t_*-\ep)n$. In order to prove Theorem \ref{finalsize} it remains to prove the other direction:
\begin{equation}\label{ubtau}
	\lim_{n\to\infty}\P(\Lambda^{(n)}>(1-\hat{s}(0)+t_*+\ep)n)=0.
\end{equation}
Define the event
\begin{equation*}
  	\Omega_{\ref{taut000}}:=\{\hat{\tau}^{(n)}> t_*+\ep\}.
\end{equation*}
Using \eqref{hats} and equality $\Lambda^{(n)}=\hat{\Lambda}^{(n)}=n-\hat{S}^{(n)}(\hat{\tau}^{(n)})$, in order  to prove \eqref{ubtau} it suffices to prove
\begin{equation}\label{taut000}
	\lim_{n\to\infty}\P(\Omega_{\ref{taut000}})=0.
\end{equation}
Indeed, \eqref{ubtau} follows from \eqref{taut000}, \eqref{ssolve} and \eqref{eq:hatsw}. For the rest of this section we  fix a small $\ep>0$. Our goal is to prove \eqref{taut000}.  

Using  equation \eqref{hatiEeq} of $\hat{I}_E$, for $t\geq t_*$,
\begin{equation}\label{eqie}
	\begin{split}
		\hat{I}^{(n)}_E(t)=&\hat{I}^{(n)}_E(t_*)+
		\int_{t_*}^t   n\left(-1-\frac{\gamma}{\lambda}-\frac{\omega }{\lambda} +\mu \frac{\hat{S}^{(n)}(u)-1}{n}+\frac{2\hat{W}^{(n)}(u)}{\hat{S}^{(n)}(u)} \right) \mathrm{d}u\\
		&+\int_{t_*}^t n\left(\frac{\omega \alpha }{\lambda}\frac{\hat{I}^{(n)}(u)}{n}-\frac{\hat{I}^{(n)}_E(u)-1}{\hat{S}^{(n)}(u)}\right)\mathrm{d}u
		+M^{(n)}_4(t)-M_4^{(n)}(t_*).
	\end{split}
\end{equation}
We also have, by \eqref{hatieq}, for $t\geq t_*$, 
\begin{equation}\label{eqi}
	\hat{I}^{(n)}(t)=\hat{I}^{(n)}(t_*)+\int_{t_*}^t  n\left(-\frac{\gamma}{\lambda}\frac{\hat{I}^{(n)}(u)}{\hat{I}^{(n)}_E(u)}+1\right)\mathrm{d}u + M^{(n)}_3(t)-M_3^{(n)}(t_*).
\end{equation}
Define
\begin{equation}\label{deff2}
	\begin{split}
		F(t)&=-1-\frac{\gamma}{\lambda}-\frac{\omega}{\lambda}+\mu \hat{s}(t)+2\frac{\hat{w}(t)}{\hat{s}(t)},\\
		F_n(t)& = -1-\frac{\gamma}{\lambda}-\frac{\omega }{\lambda} +\mu \frac{\hat{S}^{(n)}(t)-1}{n}+\frac{2\hat{W}^{(n)}(t)}{\hat{S}^{(n)}(t)}. 
	\end{split}
\end{equation}
Clearly $F$ is a continuous function of $t$. Given any $\delta>0$, we can define an event 
\begin{equation*}
  	\Omega_{\ref{fn-f}}(\delta):=\{ \abs{ F_n(t)-F(t)}\leq \delta, \ \forall \ t\leq \hat{\tau}^{(n)}\wedge (t_*+\ep)\}.
\end{equation*}
Using \eqref{eq:hatsw}, for any $\delta>0$, 
\begin{equation}\label{fn-f}
	\lim_{n\to\infty} \P\left( \Omega_{\ref{fn-f}}(\delta) \right)=1.
\end{equation}
We define a random function $E^{(n)}(t)$ by setting $E(t)=0$ for $t\leq t_*$, and for $t\geq t_*$, 
\begin{equation}\label{defet}
	\begin{split}
		E^{(n)}(t)=  & \int_{t_*}^t n\left(F_n(u)-F(u)\right) \mathrm{d}u+    \hat{I}^{(n)}_E(t_*)+ \int_{t_*}^t n\left(\frac{\omega \alpha }{\lambda}\frac{\hat{I}^{(n)}(u)}{n}-\frac{\hat{I}^{(n)}_E(u)-1}{\hat{S}^{(n)}(u)}\right)\mathrm{d}u\\
		& +M^{(n)}_4(t)-M_4^{(n)}(t_*).
	\end{split}
\end{equation}
Then on the event $\Omega_{\ref{taut000}}$, for $t\geq t_*$, 
\begin{equation}\label{defEt}
	\hat{I}^{(n)}_E(t)=   \int_{t_*}^t F(u)\mathrm{d}u+E^{(n)}(t).
\end{equation}

We claim  that 
\begin{equation}\label{ft_*}
	F(t_*)\leq 0.
\end{equation}
To see this, recall that we have shown $\hat{i}(t_*)=0$ in Lemma \ref{lem:i=0}. We also have $\hat{i}_E(t_*)=0$ by the definition of $t_*$. Thus,
\begin{equation}\label{iet*1}
	\hat{i}'_E(t_*)=  -1-\frac{\gamma}{\lambda}+\mu \hat{s}(t_*)+2\frac{\hat{w}(t_*)}{\hat{s}(t_*)}-\frac{\omega}{\lambda}=F(t_*).
\end{equation}
On the other hand
\begin{equation}\label{iet*2}
	\hat{i}'_E(t_*)=\lim_{t\to t_*-} \frac{\hat{i}(t_*)-\hat{i}(t) }{t_*-t}\leq 0. 
\end{equation}
Equation \eqref{ft_*} thus follows from \eqref{iet*1} and \eqref{iet*2}. 

We now divide the proof of \eqref{taut000} into two subsections, according to $F(t_*)<0$ or $F(t_*)=0$. 

\subsection{Proof of \eqref{taut000} for $F(t_*)<0$}

By the continuity of $F$ around $t_*$, 
\begin{equation}\label{F<0}
	F(t)\leq -\frac{3F(t_*)}{4},\quad  \forall \ t\in [t_*,t_*+\ep],
\end{equation}
provided  $\ep$ is small. 
Define the event 
\begin{equation}\label{610}
	\Omega_{\ref{610}}= \{\hat{\tau}^{(n)}\leq t_*+\ep  \}
	\cup  \{\hat{\tau}^{(n)}> t_*+\ep ,\hat{I}^{(n)}(t)\leq \left(t-t_*-\frac{F(t_*)\lambda}{8\omega \alpha}\right)n, \ \forall  t_* \leq t\leq  \hat{\tau}^{(n)}\}.
\end{equation}
By \eqref{m3bd} and \eqref{hatieq},
\begin{equation}\label{omega610}
	\lim_{n\to\infty}\P(\Omega_{\ref{610}})=1.
\end{equation} 
On $\Omega_{\ref{610}}\cap \{ \hat{\tau}^{(n)}\geq t_*+\ep  \} $,  we have
\begin{equation}\label{ini}
	\int_{t_*}^{t_*+\ep}\hat{I}^{(n)}(t)\mathrm{d}t\leq \frac{\ep^2}{2} -\frac{F(t_*)\lambda}{8\omega \alpha} \ep.
\end{equation}
We set $\Omega_{\ref{611}}$ to be
\begin{equation}\label{611} 
  	\left\{\hat{\tau}^{(n)}\leq t_*+\ep \right\} \cup \left\{\hat{\tau}^{(n)}> t_*+\ep,\ \abs{\hat{I}^{(n)}_E(t_*)}\leq \frac{-F(t_*)\ep n}{8},\ \sup_{t\leq  \hat{\tau}^{(n)} \wedge  (t_*+\ep)} \abs{M^{(n)}_4(t)} \leq \frac{-F(t_*)\ep n}{8}\right\} .
\end{equation}
By \eqref{eq:hatx} (applied to $t=t_*$), the fact that $\hat{i}_E(t_*)=0$ and \eqref{m4bd} (assume $\ep<(\hat{s}(0)-t_*)/2$), 
\begin{equation}\label{omega611}
	\lim_{n\to\infty}\P(\Omega_{\ref{611}})=1.
\end{equation}

On the event $\Omega_{\ref{610}} \cap \Omega_{\ref{611}} \cap  \Omega_{\ref{fn-f}}(-F(t_*)/8)\cap \{\hat{\tau}^{(n)}> t_*+\ep\}$, by \eqref{F<0} and \eqref{ini},
\begin{equation*}
	\begin{split}
		\hat{I}^{(n)}_E(t+\ep)&\leq  (t_*+\ep-t_*)\frac{3F(t_*)n}{4}+E^{(n)}(t_*+\ep)\\
		&\leq \frac{3\ep F(t_*)n}{4} -\frac{F(t_*)\ep n}{8}-\frac{F(t_*)\ep n}{8} +\frac{n\omega \alpha}{\lambda} \left( \frac{\ep^2}{2} -\frac{F(t_*)\lambda}{8\omega \alpha} \ep \right)-\frac{\ep F(t_*)}{8}-\frac{\ep F(t_*)}{8}\\
		&=n\left(\frac{ F(t_*)}{8}\ep+\frac{\omega \alpha }{2\lambda}\ep^2 \right),
	\end{split}
\end{equation*}
which is smaller than $0$ if $\ep$ is small. 
Thus, on $\Omega_{\ref{610}} \cap \Omega_{\ref{611}} \cap  \Omega_{\ref{fn-f}}(-F(t_*)/8)\cap \{\hat{\tau}^{(n)}> t_*+\ep\}$, we necessarily have $\hat{\tau}^{(n)}\leq t_*+\ep$,  which is a contradiction. Consequently, 
\begin{equation}\label{610611}
	\Omega_{\ref{610}} \cap \Omega_{\ref{611}} \cap  \Omega_{\ref{fn-f}}(-F(t_*)/8)\cap \{\hat{\tau}^{(n)}> t_*+\ep\}=\emptyset.
\end{equation}
Combining \eqref{fn-f}, \eqref{610611}, \eqref{omega610} and \eqref{omega611}, we get
\begin{equation*}
	\lim_{n\to\infty}\P(\hat{\tau}^{(n)}> t_*+\ep)=0,
\end{equation*}
as desired. 

\subsection{Proof of \eqref{taut000} for $F(t_*)=0$}

By \eqref{deff2} and the assumption $\lambda>\lambda_c$,
\begin{equation}\label{f't1}
	F'(t)=-\mu+\frac{2\omega \alpha }{\lambda } \left(1 -\log \frac{\hat{s}(0)}{\hat{s}(t)}\right) \leq -\mu+ \frac{2\omega \alpha }{\lambda } <-\mu+\frac{2\omega \alpha }{\lambda_c}. 
\end{equation}
Recall that $\lambda_c=(\gamma+\omega)/(\mu-1)$. Thus, we have 
\begin{equation}\label{f't2}
	\begin{split}
		-\mu+\frac{2\omega \alpha }{\lambda_c}&= -\mu+\frac{2\omega \alpha  (\mu-1)}{\gamma+\omega}\\
		& =\left(\frac{2\omega \alpha }{\gamma+\omega}-1\right)\mu- \frac{2\omega \alpha}{\gamma+\omega}\\
		&=
		\left(\frac{2\omega \alpha }{\gamma+\omega}-1 \right)\left( \mu- \frac{2\omega \alpha }{2\omega \alpha -\omega -\gamma}\right).
	\end{split}
\end{equation}

If the condition \eqref{cond_cont} is satisfied, then   either 
\begin{equation*}
  	\frac{2\omega \alpha }{\gamma+\omega}-1 \leq \frac{2\omega \alpha }{\omega(2\alpha -1)+\omega}-1=0,
\end{equation*}
or 
\begin{equation*}
  	\frac{2\omega \alpha }{\gamma+\omega}-1>0 \mbox{ and }\mu\leq  \frac{2\omega \alpha }{2\omega \alpha -\omega -\gamma}.
\end{equation*}
By \eqref{f't2}, in both cases 
\begin{equation}\label{f't3}
	-\mu+\frac{2\omega \alpha }{\lambda_c}\leq 0.
\end{equation}
Therefore, we can set
\begin{equation}\label{f't4}
	C_{\ref{f't4}}:= -\left(    -\mu+\frac{2\omega \alpha }{\lambda}\right) >0.
\end{equation}
It follows from \eqref{f't1} that, for $t\geq t_*$, 
\begin{equation*}
  	F(t)\leq -C_{\ref{f't4}}(t-t_*).
\end{equation*}
Consequently, we have
\begin{equation}\label{intft}
	\int_{t_*}^t F(u)\mathrm{d}u\leq -\int_{t_*}^t C_{\ref{f't4}}(u-t_*)\mathrm{d}u=-\frac{C_{\ref{f't4}}}{2}(t-t_*)^2.
\end{equation}
Let $\ep_1$ be some number to be determined and define the events 
\begin{equation}\label{62-1}
	\Omega_{\ref{62-1}}=\left\{ \sup_{0\leq t\leq  \hat{\tau}^{(n)}} \abs{M^{(n)}_3(t)}\leq \ep_1 n\right\},
\end{equation}
By \eqref{m3bd},
\begin{equation}\label{omega620}
	\lim_{n\to\infty}\P(\Omega_{\ref{62-1}})=1.
\end{equation}
Consider the event  
\begin{align}\label{621}
	\Omega_{\ref{621}}=&
		\left\{\sup_{0\leq t\leq  \hat{\tau}^{(n)}\wedge (t_*+\ep) } \abs{M^{(n)}_4(t)} \leq \ep_1 n\right\} \cap 	\Omega_{\ref{fn-f}}(\ep_1) \cap \Omega_{\ref{62-1}}\\
		& \cap \left( \{\hat\tau\leq t_*+\ep\}  \cup \{ \hat\tau> t_*+\ep, \hat{I}^{(n)}_E(t_*)\leq \ep_1 n, \hat{I}^{(n)}(t)\leq (t-t_*+\ep_1)n,\ \forall t_*\leq t\leq \hat{\tau}^{(n)}  \}  \right)\nonumber.
\end{align}
By the fact that $\hat{i}_E(t_*)=0$, \eqref{hatieq}, \eqref{m3bd},
\eqref{m4bd} and \eqref{fn-f} (with the $\delta$ there set to be $\ep_1$),
\begin{equation*}
  	\lim_{n\to\infty}\P(\Omega_{\ref{621}})= 1.
\end{equation*}
On $\Omega_{\ref{621}}$, using the definition of $E^{(n)}(t)$ in \eqref{defet}, for $t_*\leq t \leq t_*+\ep$, 
\begin{equation}\label{etbd}
	\begin{split}
		E^{(n)}(t)\leq & \int_{t_*}^t (n\ep_1) \mathrm{d}u +\ep_1 n + \int_{t_*}^t n\frac{\omega \alpha }{\lambda}(u-t_*+\ep_1)\mathrm{d}u+2\ep_1 n\\
		\leq & \left(\frac{\omega \alpha }{2\lambda}(t-t_*)^2+ \left(1+\frac{\omega\alpha}{\lambda}\right)    \ep_1 (t-t_*) +3\ep_1\right) n.
	\end{split}
\end{equation}
If both $\ep$ and $\ep_1$ are small enough, then 
\begin{equation}\label{etbd2}
	\frac{\omega \alpha }{2\lambda}(t-t_*)^2+ \left(1+\frac{\omega\alpha}{\lambda}\right)    \ep_1 (t-t_*) \leq \frac{C_{\ref{f't4}}\gamma}{4\omega \alpha }(t-t_*),\quad  \forall \ t_*\leq t\leq t_*+\ep. 
\end{equation}
It follows from  \eqref{defEt}, \eqref{intft} \eqref{etbd} and \eqref{etbd2}  that, on $\Omega_{\ref{621}}$, for $t_*\leq t\leq t_*+\ep$,
\begin{equation}\label{iebd3}
	\hat{I}^{(n)}_E(t)\leq E^{(n)}(t)\leq \frac{C_{\ref{f't4}}\gamma}{4\omega \alpha }(t-t_*)n+3\ep_1 n. 
\end{equation}
Using \eqref{eqi} and \eqref{iebd3},  on $\Omega_{\ref{621}}$, 
\begin{equation*}
	\begin{split}
		0\leq    \hat{I}^{(n)}(t)&\leq \int_{t_*}^t  n\left(-\frac{\gamma}{\lambda}\frac{\hat{I}^{(n)}(u)}{\hat{I}^{(n)}_E(u)}+1\right)\mathrm{d}u+3\ep_1 n \\
		&\leq n\left(-\frac{\gamma}{\lambda}\int_{t_*}^t \frac{\hat{I}^{(n)}(u)}{ C_{\ref{f't4}}\gamma  (u-t_*)n/(4\omega \alpha)+3\ep_1 n}\mathrm{d}u  +t-t_*+3\ep_1 \right).
	\end{split}
\end{equation*}
Hence, we deduce that on $\Omega_{\ref{621}}$, for $\ep_1$ small enough, 
\begin{equation}\label{ibd4}
	\begin{split}
		\int_{t_*}^t \hat{I}^{(n)}(u)\mathrm{d}u &\leq \left( \frac{C_{\ref{f't4}}\gamma}{4\omega \alpha}  (t-t_*) +3\ep_1  \right)      \int_{t_*}^t \frac{\hat{I}^{(n)}(u)}{C_{\ref{f't4}}\gamma  (u-t_*)/(4\omega \alpha)+3\ep_1 }\mathrm{d}u\\
		&\leq n\left( \frac{C_{\ref{f't4}}\gamma}{4\omega \alpha}  (t-t_*) +3\ep_1  \right) \frac{\lambda}{\gamma} (t-t_*+3\ep_1)\\
		&\leq n\left(\frac{C_{\ref{f't4}}\lambda }{4\omega \alpha} (t-t_*)^2+\ep_1 \right). 
	\end{split}
\end{equation}
Apply \eqref{ibd4} to \eqref{defet}, on $\Omega_{\ref{621}}$,
\begin{equation*}
	\hat{I}^{(n)}_E(t)\leq \int_{t_*}^t F(u)\mathrm{d}u+  \frac{\omega \alpha }{\lambda}\left( n\frac{C_{\ref{f't4}}\lambda }{4\omega \alpha} (t-t_*)^2+\ep_1 n\right)+ \ep_1 n(t-t_*)+ 3\ep_1 n.
\end{equation*}
Using \eqref{intft}, 
\begin{equation}\label{iebd5}
	\hat{I}^{(n)}_E(t)\leq -\frac{C_{\ref{f't4}}}{4}(t-t_*)^2 +\left(\frac{\omega \alpha}{\lambda}+4\right)\ep_1 n.
\end{equation}
Using \eqref{iebd5}, by choosing $\ep_1$ small enough so that 
\begin{equation*}
  	\ep_1 <\frac{C_{\ref{f't4}}}{4} \left(\frac{\omega \alpha}{\lambda}+4\right)^{-1} \ep^2,
\end{equation*}
we can achieve $\hat{I}^{(n)}_E(t_*+\ep)\leq 0$ on $\Omega_{\ref{621}}$, which then implies that $\hat{\tau}^{(n)}<t_*+\ep$ on this event. Since $\P(\Omega_{\ref{621}})\to 1$ as $n\to\infty$, \eqref{taut000} follows. Thus, we have completed the proof of  \eqref{taut000} in Case \textcircled{2} ($F(t_*)=0$) and also the proof of part (i) of Theorem \ref{finalsize}.  As we mentioned in the beginning of this section, part (ii) can be proved in the same way. We omit the details. 

\section{Continuity of Phase transitions of the SIR-$\omega$ model}\label{sec:contphase}

In this section, we shall prove Theorem \ref{phase}. By Theorem \ref{finalsize}, it suffices to prove that $t_*\to 0$ as $\lambda \searrow\lambda_c=(\gamma+\omega)/(\mu-1)$. Recall  the definition of the function $F(t)$ in \eqref{deff2}. Using the expression of $F$  we can write
\begin{equation}\label{eqiie2}
	\begin{split}
		\frac{\mathrm{d}\hat{i}}{\mathrm{d}t}&=-\frac{\gamma}{\lambda}\frac{\hat{i}}{\hat{i}_E}+1,\\
		\frac{\mathrm{d}\hat{i}_E}{\mathrm{d}t}&=F-\frac{\hat{i}_E}{\hat{s}}+\frac{\omega \alpha}{\lambda}\hat{i}.
	\end{split}
\end{equation}
Using the equation for $\hat{i}'(t)$   and the fact $\hat{i}(0)=0$ (initially only one vertex is infected),  
\begin{equation*}
	\hat{i}(t)=-\int_0^t \frac{\gamma}{\lambda}\frac{\hat{i}(u)}{\hat{i}_E(u)}\mathrm{d}u+t\geq 0.
\end{equation*}
Hence, we have, for $t\leq t_*$, 
\begin{equation}\label{intratio}
	\int_0^t \frac{\hat{i}(u)}{\hat{i}_E(u)}\mathrm{d}u\leq \frac{\lambda}{\gamma}t. 
\end{equation}
Since $\abs{\hat{i}'_E}$ and $\abs{F'(t)}$ are uniformly bounded for all $\lambda$ in a neighborhood of $\lambda_c$ and  $t\leq t_*$, there exists a constant $C$ such that $\hat{i}_E(t)\leq Ct$ and $F(t)\leq F(0)+Ct$.  Using this fact together with \eqref{eqiie2} and \eqref{intratio}, we see that for some constant $C_{\ref{ie1}}$,
\begin{equation}\label{ie1}
	\begin{split}
		\hat{i}_E(t)&\leq \int_0^t F(u)\mathrm{d}u+\int_0^t \frac{\omega \alpha }{\lambda}\hat{i}(u)\mathrm{d}u,\\
		&\leq \int_0^t (F(0)+Cu) \mathrm{d}u+  
		\frac{\omega \alpha }{\lambda} \int_0^t \frac{\hat{i}(u)}{\hat{i}_E(u)} Cu \mathrm{d}u,\\
		&\leq   F(0)t+  C_{\ref{ie1}} t^2. 
	\end{split}
\end{equation}

Using the expression of $F(t)$ and \eqref{f't1}, 
\begin{equation}\label{Fdata}
	\begin{split}
		F(0)&=-\frac{1+\gamma+\omega}{\lambda}+\mu,\\
		F'(0)&=-\mu+2\frac{\omega \alpha}{\lambda},\\
		F''(t)&=\frac{-2\omega \alpha }{\lambda (1-t)}.
	\end{split}
\end{equation}
Note that $F(0)\to 0$ as $\lambda\searrow\lambda_c$, while $F''(t)$ is bounded from above by $-2\omega \alpha/\lambda$.

We now divide the proof  of Theorem \ref{phase} into two cases. 

\subsection{Proof of Case \textcircled{1}}

Suppose either 
$\omega(2\alpha-1)\leq \gamma$ or 
\begin{equation*}
  	\omega(2\alpha-1)> \gamma \mbox{ and } \mu<2\omega \alpha/(\omega(2\alpha-1)-\gamma).
\end{equation*} 
Then by \eqref{f't2}, 
\begin{equation*}
  	-\mu+\frac{2\omega \alpha}{\lambda_c}=-\mu+\frac{2\omega \alpha (\mu-1)}{\gamma+\omega}<0.
\end{equation*}
By \eqref{Fdata} and \eqref{f't2}, 
\begin{equation*}
	\lim_{\lambda \to \lambda_c}F'(0)=-\mu+\frac{2\omega \alpha}{\lambda_c}<0.
\end{equation*}
Hence, there exists $\ep_0,\delta_0>0$, such that
for $\lambda-\lambda_c<\ep_0$ and $t<\delta_0$, 
\begin{equation}\label{f't6}
	F(t)\leq F(0)-C_{\ref{f't6}}t,
\end{equation}
for some $C_{\ref{f't6}}>0$.
By \eqref{intratio}, \eqref{ie1} and \eqref{f't6}, for $t\leq \delta_0$,
\begin{equation*}
	\begin{split}
		\hat{i}_E(t)&\leq \int_0^t F(u)\mathrm{d}u+\int_0^t \frac{\omega \alpha}{\lambda}\hat{i}(u)\mathrm{d}u\\
		&\leq \int_0^t (F(0)-C_{\ref{f't6}}u)\mathrm{d}u+
		\frac{\omega \alpha}{\lambda}\int_0^t \frac{\hat{i}(u)}{\hat{i}_E(u)}(F(0)u+C_{\ref{ie1}}u^2)\mathrm{d}u\\
		&\leq F(0)t-\frac{C_{\ref{f't6}}}{2}t^2+\frac{\omega \alpha}{\lambda}(F(0)t+C_{\ref{ie1}}t^2)\frac{\lambda }{\gamma}t\\
		&=F(0)t-\left(\frac{C_{\ref{f't6}}}{2}-\frac{\omega \alpha}{\gamma}F(0)\right)t^2+\frac{\omega \alpha}{\gamma}t^3,
	\end{split}
\end{equation*}
where we have used $\hat{i}(u)=(\hat{i}(u)/\hat{i}_E(u))\hat{i}_E(u)$ in the second inequality. 
Note that 
\begin{equation*}
  	\lim_{\lambda\searrow\lambda_c}F(0)=0 \mbox{ and } \lim_{\lambda\to \lambda_c}\left(\frac{C_{\ref{f't6}}}{2}-\frac{\omega \alpha}{\gamma}F(0)\right)=\frac{C_{\ref{f't6}}}{2}>0.
\end{equation*}
Consequently, there exist $\delta_1<\delta_0$ and $\ep_1<\ep_0$ such that for $t\leq \delta_1$ and $\lambda<\lambda_c+\ep_1$,
\begin{equation*}
	\hat{i}_E(t)\leq F(0)t-\left(\frac{C_{\ref{f't6}}}{2}-\frac{\omega \alpha}{\gamma}F(0)\right)t^2+\frac{\omega \alpha}{\gamma}t^3\leq 
	F(0)t-\frac{C_{\ref{f't6}}}{4}t^2.
\end{equation*}
From this we obtain that
\begin{equation*}
  	t_*=\inf\{0<t\leq \hat{s}(0): \hat{i}_E(t)=0\}\leq \frac{4F(0)}{C_{\ref{f't6}}},
\end{equation*}
which converges to 0 as $\lambda\searrow\lambda_c$ since $F(0)\to 0$. 

\subsection{Proof of Case \textcircled{2}}

Suppose $\omega(2\alpha-1)> \gamma$ and $\mu=2\omega \alpha/(\omega(2\alpha-1)-\gamma)$. Note that $\alpha$ must be positive in this case. By \eqref{Fdata} and \eqref{f't2}, we have
\begin{equation*}
	F'(0)<0 \mbox{ and }  \lim_{\lambda \to \lambda_c}F'(0)=-\mu+\frac{2\omega \alpha}{\lambda_c}=0.
\end{equation*}
Thus, by the third equation in \eqref{Fdata}, 
\begin{equation*}
  	F(t)\leq F(0)-\frac{\omega \alpha}{\lambda}t^2.
\end{equation*}
Using \eqref{ie1}, for some constant $C_{\ref{iebd6}}$ (independent of $\lambda-\lambda_c$),
\begin{equation}\label{iebd6}
	\begin{split}
		\hat{i}_E(t)&\leq \int_0^t F(u)\mathrm{d}u+\int_0^t \frac{\omega \alpha}{\lambda}\hat{i}(u)\mathrm{d}u\\
		&\leq \int_0^t \left(F(0)- \frac{\omega \alpha}{\lambda} u^2\right)\mathrm{d}u + \frac{\omega \alpha}{\lambda}\int_0^t \frac{\hat{i}(u)}{\hat{i}_E(u)}(F(0)u+C_{\ref{ie1}}u^2)\mathrm{d}u\\
		&\leq F(0)t+C_{\ref{iebd6}}(F(0)t^2+t^3), 
	\end{split}
\end{equation}
where we have again used $\hat{i}(u)=(\hat{i}(u)/\hat{i}_E(u))\hat{i}_E(u)$ in the second inequality. Substituting this bound into \eqref{iebd6} again, we see that
\begin{equation*}
	\begin{split}
		\hat{i}_E(t)&\leq 
		\int_0^t \left(F(0)- \frac{\omega \alpha}{\lambda} u^2\right)\mathrm{d}u+\frac{\omega \alpha}{\gamma} t\left(F(0)t+C_{\ref{iebd6}}(F(0)t^2+t^3)\right)\\
		& = F(0)t +\frac{\omega \alpha}{\gamma}F(0)t^2 +\left(-\frac{\omega \alpha}{3\lambda}+\frac{\omega \alpha}{\gamma}C_{\ref{iebd6}} F(0) \right) t^3 + \frac{\omega \alpha}{\gamma}C_{\ref{iebd6}}t^4.
	\end{split}
\end{equation*}
Since 
\begin{equation*}
  	\lim_{\lambda\searrow\lambda_c}F(0)=0 \mbox{ and } \lim_{\lambda \searrow\lambda_c} \left(-\frac{\omega \alpha}{3\lambda}+\frac{\omega \alpha}{\lambda}C_{\ref{iebd6}}F(0)\right)=-\frac{\omega \alpha}{3\lambda_c}<0,
\end{equation*}
we see that there exist  $\ep_2>0$ and $\delta_2>0$ such that for $\lambda<\lambda_c+\ep_2$ and $t\leq \delta_2$,
\begin{equation}\label{iebd7}
	\hat{i}_E(t)\leq 2F(0)t-\frac{\omega \alpha}{6\lambda_c}t^3.
\end{equation}
It follows from \eqref{iebd7} that
\begin{equation*}
  	t_*=\inf\{0<t\leq \hat{s}(0): \hat{i}_E(t)=0\}\leq \sqrt{\frac{12F(0)\lambda_c}{\omega \alpha}},
\end{equation*}
which converges to 0 as $\lambda\searrow\lambda_c$ since $F(0)\to 0$.
 
\appendix

\section{Proof of \eqref{m4bd}}\label{sec:ap1}

Let $N_i$ be the number of edges added to vertex $i$ when $i$ first becomes infected and $\hat{N}_i$  the number of infected edges of  $i$ just before it becomes recovered. The quadratic variation of $M^{(n)}_4(t)$ can be bounded by
\begin{equation}\label{m4bd-2}
	\langle M^{(n)}_4(t), M^{(n)}_4(t) \rangle = \langle \hat{I}^{(n)}_E(t), \hat{I}^{(n)}_E(t) \rangle \leq \sum_{i=1}^n (\hat{N}_{i}^2+N_i^2) + Q(t)+\sum_{j=\hat{S}^{(n)}(t)}^n Z_j^2,
\end{equation}
where $Q(t)$ is the  number of arrivals of rewiring/dropping Poisson processes by time $t$, and  $Z_j$ is distributed as 1+Binomial($2\mu n$, $1/j$). In addition, $Z_j,1\leq j\leq n$ are independent. By \eqref{boundqt},
\begin{equation}\label{m4bd-3}
	\E(Q(t))\leq \frac{\gamma n t}{\lambda}.
\end{equation} 
By Lemma \ref{boundni}, for some constant $C>0$,
\begin{equation}\label{m4bd-4}
	\E\left(\sum_{i=1}^n (\hat{N}_{i}^2+{N}_{i}^2) \right) \leq Cn.
\end{equation}
Define the event 
\begin{equation}\label{m100}
	\Omega_{\ref{m100}}=\left\{\sup_{t\leq \Hat{\tau}} \abs{M^{(n)}_1(t)}\leq (\hat{s}(0)-t_*)n/4\right\}.
\end{equation}
By \eqref{quast} and the Burkholder-Davis-Gundy inequality (see, e.g., \cite[Theorem 7.34]{MR2933773} with $p=6$), we see that
\begin{equation*}
	\E\left(\sup_{t\leq \hat{\tau}^{(n)}} M^{(n)}_1(t)^6 \right)\leq Cn^3,
\end{equation*}
and consequently,
\begin{equation}\label{m101}
	\P(\Omega_{\ref{m100}}^c)\leq \frac{C n^3}{((\hat{s}(0)-t_*)n/4)^6}\leq \frac{C'}{n^4}.
\end{equation}
On the event $\Omega_{\ref{m100}}$, by \eqref{hats2} and \eqref{m101}, 
\begin{equation*}
	\inf_{t\leq  \hat{\tau}^{(n)} \wedge ((\hat{s}(0)+t_*)/2)} \hat{S}^{(n)}(t) \geq \left(\hat{s}(0)-\frac{t_*+\hat{s}(0)}{2}\right)n-\frac{\hat{s}(0)-t_*}{4}n\geq \frac{\hat{s}(0)-t_*}{4}n.
\end{equation*}
Define 
\begin{equation}\label{defz0}
	Z=\sum_{j=1}^n Z_j^2.
\end{equation}
We have 
\begin{equation}
	\sum_{j=\hat{S}^{(n)}(t)}^n Z_j^2 \leq Z\mathbf{1}_{\Omega^c_{\ref{m100}}}+\sum_{j=(\hat{s}(0)-t_*)n/4}^n Z_j^2. 
\end{equation}
It follows that 
\begin{equation}\label{m4bd-10}
	\E\left(   \sum_{j=\hat{S}^{(n)}(t)}^n Z_j^2 \right) \leq \E\left(Z\mathbf{1}_{\Omega^c_{\ref{m100}}}\right) +\E\left(\sum_{j=(\hat{s}(0)-t_*)n/4}^n Z_j^2 \right).
\end{equation}
Observe that the second moment of Binomial($2\mu n$, $1/j$) is bounded by 
\begin{equation}\label{bi2bd}
	\frac{2\mu n}{j}+	\left(\frac{2\mu n}{j}\right)^2 \leq \frac{C n^2}{j^2}.
\end{equation}
Thus,
\begin{equation}\label{m4bd-9}
	\E\left(\sum_{j=(\hat{s}(0)-t_*)n/4}^n Z_j^2 \right)\leq C\sum_{j=(\hat{s}(0)-t_*)n/4}^n\frac{n^2}{j^2}\leq C'n.
\end{equation}
To control $Z$, similarly to \eqref{bi2bd}, the fourth moment of 	 Binomial($2\mu n$, $1/j$) can be bounded by $C(n/j)^4$. Therefore,
\begin{equation*}
	\E(Z^2)\leq Cn\E\left(\sum_{j=1}^n \frac{n^4}{j^4}\right)\leq  C'n^5.
\end{equation*}
By Cauchy-Schwartz inequality, we get
\begin{equation}\label{m4bd-8}
	\E\left(Z\mathbf{1}_{\Omega^c_{\ref{m100}}}\right)\leq \sqrt{\E(Z^2)\P(\Omega^c_{\ref{m100}})}\leq C\sqrt{n^5 \cdot n^{-3}}= Cn.
\end{equation}
Combining \eqref{m4bd-10}, \eqref{m4bd-9} and \eqref{m4bd-8}, we see that
\begin{equation}\label{m4bd-7}
	\E\left(\sum_{j=\hat{S}^{(n)}(t)}^n Z_j^2 \right)\leq Cn.
\end{equation}
Equation \eqref{m4bd} now follows from \eqref{m4bd-2}, \eqref{m4bd-3}, \eqref{m4bd-4}, \eqref{m4bd-7} and  $L^2$ maximal inequality for martingales.  

\section{Proof of \eqref{eq:haty}}\label{app:haty}

Similarly to the proof of \eqref{eq:hatsw}, we divide the proof of \eqref{eq:haty} into two steps. For $t\geq \hat{\tau}^{(n)}_{\delta}$ we set $\hat{\mathbf{Y}}^{(n)}(t)$ to be $\hat{\mathbf{Y}}(\hat{\tau}^{(n)}_{\delta})$. We first	show that $\hat{\mathbf{Y}}^{(n)}(\cdot)/n$ is a tight sequence in D[0, $t_*$]. Since $\hat{\mathbf{Y}}^{(n)}(0)/n \to \hat{\mathbf{y}}^{(n)}(0)/n$, we know  $\{ \hat{\mathbf{Y}}^{(n)}(0)/n \}_{n\geq 1}$ is a tight sequence of random variables. To establish tightness of $\{ \hat{\mathbf{Y}}^{(n)}(t)/n, 0\leq t\leq t_*\}_{n\geq 1}$  we need to show that for any fixed $\ep_1,\ep_2>0$, there exist $\theta>0$ and $n_0>0$ so that for $n\geq n_0$,
\begin{equation}\label{ytightness}
	\P\left (\sup_{\abs{t_1-t_2}\leq \theta, t_1<t_2\leq t_* \wedge \hat{\tau}^{(n)}_{\delta}   } \norm{ \frac{\hat{\mathbf{Y}}^{(n)}(t_1) }{n} - \frac{\hat{\mathbf{Y}}^{(n)}(t_2)}{n} }  \geq \ep_2 \right) \leq \ep_1.
\end{equation}

Assuming \eqref{ytightness} for the moment, we show that the limit of $\hat{\mathbf{Y}}^{(n)}(t)/n$ coincides with $\hat{\mathbf{y}}(t)$. Similarly to the proof that the  limit of $\hat{W}^{(n)}(t)/n$ coincides with $\hat{w}(t)$, we would like to show that any sequential limit point $\mathbf{\check{y}}(t)$ coincides with $\hat{\mathbf{y}}(t)$, which is then necessarily unique. This is clearly the case since, after dividing both sides of \eqref{hatieq}, \eqref{hatiEeq} by $n$ and then sending $n$ to $\infty$, we obtain that
\begin{align*}
	\check{i}(t)&=\hat{i}(0)-\int_0^t\frac{\gamma\check{i}(u)}{\lambda\hat{i}_E(u)}\mathrm{d}u+t, \\
	\check{i}_E(t)&=\hat{i}_E(0)+\int_0^t \left(\mu\hat{s}(u)+\frac{2\hat{w}(u)}{\hat{s}(u)}- \frac{\check{i}_E(u)}{\hat{s}(u)}+ \frac{\omega \alpha}{\lambda}\check{i}(u) \right)\mathrm{d}u - \frac{\lambda+\omega+\gamma}{\lambda}t.
\end{align*}
For $t\leq t_*$, this equation has a unique solution $\mathrm{\check{y}}(t)=\hat{\mathbf{y}}(t)$, as proved in Theorem \ref{exiuni}. 

It remains to establish \eqref{ytightness}. Using \eqref{hatieq} and \eqref{hatiEeq}, 
\begin{equation*}
	\frac{\hat{I}^{(n)}(t_2)}{n}-\frac{\hat{I}^{(n)}(t_1)}{n}=\int_{t_1}^{t_2} \left(1-\frac{\gamma\hat{I}^{(n)}(u)}{\lambda\hat{I}^{(n)}_E(u)} \right)\mathrm{d}u +\frac{M_3^{(n)}(t_2)-M_3^{(n)}(t_1)}{n},
\end{equation*}
and
\begin{equation}\label{iEdiff}
  \begin{split}
    \frac{\hat{I}^{(n)}_E(t_2)}{n}-\frac{\hat{I}^{(n)}_E(t_1)}{n}=&
    \int_{t_1}^{t_2} \left(\frac{\hat{S}^{(n)}(u)-1}{n}\mu + \frac{2\hat{W}^{(n)}(u)}{\hat{S}^{(n)}(u)} -\frac{\hat{I}^{(n)}_E(u)-1}{\hat{S}^{(n)}(u)} +
    \frac{\omega\alpha\hat{I}^{(n)}(u)}{\lambda n} -\frac{\lambda+\omega+\gamma}{\lambda} \right) \mathrm{d}u\\
    &+ \frac{M^{(n)}_4(t_2)-M^{(n)}_4(t_1)}{n}.
  \end{split}
\end{equation} 
We define the event
\begin{equation}\label{iie}
	\Omega_{\ref{iie}}=\{\hat{S}^{(n)}(t)\geq \frac{\hat{s}(0)-t_*}{2}n, \quad  \forall \ t\leq t_*\wedge \hat{\tau}^{(n)} \}\cap \{\sup_{t\leq \hat{\tau}^{(n)}\wedge t_*} \left(\abs{M^{(n)}_3(t)} + \abs{M^{(n)}_4(t)}\right) \leq n^{2/3}\}.
\end{equation}
By \eqref{slim2}, \eqref{m3bd} and \eqref{m4bd}, 
\begin{equation*}
	\lim_{n\to\infty}\P(\Omega_{\ref{iie}})=1.
\end{equation*}
On $\Omega_{\ref{iie}}$, for $t_1<t_2\leq \hat{\tau}^{(n)}_{\delta}\wedge t_*$,
\begin{equation}\label{it2-t1}
	\begin{split}
    	\abs{   \frac{\hat{I}^{(n)}(t_2)}{n}-\frac{\hat{I}^{(n)}(t_1)}{n}}\leq (t_2-t_1)\left(1+\frac{2\gamma }{\lambda \delta}\right)+\frac{2n^{2/3}}{n}\leq C(t_2-t_1)+2n^{-1/3},
  	\end{split}
\end{equation}
and 
\begin{equation}\label{iet2-t2}
  	\begin{split}
    	&\frac{\hat{I}^{(n)}_E(t_2)}{n}-\frac{\hat{I}^{(n)}_E(t_1)}{n}\\
    	\leq & (t_2-t_1)\left(\mu+\frac{4 \mu}{\hat{s}(0)-t_*}+\frac{4\mu}{\hat{s}(0)-t_*}+\frac{\omega \alpha }{\lambda }+\frac{\lambda +\omega+\lambda }{\lambda }  \right)+\frac{2n^{2/3}}{n}\\
    	\leq &C(t_2-t_1)+2n^{-1/3}.
  \end{split}
\end{equation}
Equation \eqref{ytightness} now follows from \eqref{it2-t1}, \eqref{iet2-t2} and the fact that
\begin{equation*}
  	\begin{split}
    	\norm{\hat{\mathbf{Y}}^{(n)}(t_2)-\hat{\mathbf{Y}}^{(n)}(t_1)}
    	= &\sqrt{\abs{\hat{I}^{(n)}(t_2)-\hat{I}^{(n)}(t_1)}^2+\abs{\hat{I}^{(n)}_E(t_2)-\hat{I}^{(n)}_E(t_1)}^2}\\
    	\leq &\abs{\hat{I}^{(n)}(t_2)-\hat{I}^{(n)}(t_1)}+\abs{\hat{I}^{(n)}_E(t_2)-\hat{I}^{(n)}_E(t_1)}.
  	\end{split}
\end{equation*}
We have now completed the proof of \eqref{eq:haty}. 

\section{Proof of Lemma \ref{lem:i=0}}\label{sec:ap2}

By \eqref{hati_E}, we see that $\abs{\hat{i}'_E(t)}$ is uniformly bounded. Hence, there exists $C>0$ such that 
\begin{equation}\label{ietub}
  	\hat{i}_E(t)\leq C(t_*-t)
\end{equation}
for all $t\leq t_*$. Assuming $\limsup_{t\to t_*}\hat{i}(t)=a>0$, then by the fact that $\hat{i}'(t)\leq 1$, there exists an $\ep>0$ such that $\hat{i}(t)>a/2$ for $t\in (t_*-\ep,t_*)$. By \eqref{hati} and \eqref{ietub},
\begin{equation*}
  	\hat{i}(t)-\hat{i}(t_*-\ep)= -\int_{t_*-\ep}^{t} \frac{\hat{i}(u)}{\hat{i}_E(u)}\mathrm{d}u  +(t-t_*+\ep)\leq -\int_{t_*-\ep}^t \frac{a/2}{C(t_*-u)}\mathrm{d}u+(t-t_*+\ep),
\end{equation*}
which goes to $-\infty$ as $t\to t_*$. This is a contradiction. 
Hence, we must have $\lim_{t\to t_*}\hat{i}(t)=0$, proving Lemma \ref{lem:i=0}. 

\section{Proof of Corollary \ref{cor:lbtau}}\label{sec:ap3}

From the definition of $\hat{\tau}^{(n)}$ (the first time when $\hat{I}_E^{(n)}$ reaches 0), we see that 
\begin{equation*}
  	\hat{\tau}^{(n)} \geq \hat{\tau}^{(n)}_{\delta},\quad \forall \ \delta>0.
\end{equation*}
By Lemma \ref{lem:taudelta}, for any $\ep>0$, we can find a $\delta>0$, so that 
\begin{equation*}
  	\lim_{n\to\infty}\P(\hat{\tau}^{(n)}_{\delta}\geq t_*-\ep)=1.
\end{equation*}
It follows that
\begin{equation*}
  	\lim_{n\to\infty}\P(\hat{\tau}^{(n)} \geq t_*-\ep)\geq \lim_{n\to\infty}\P(\hat{\tau}^{(n)}_{\delta}\geq t_*-\ep)=1.
\end{equation*}
Equation \eqref{sizelb} follows from the convergence of $\hat{S}^{(n)}(t)/n$ to $\hat{s}(t)(=\hat{s}(0)-t)$. Indeed, by \eqref{eq:lbtau} and \eqref{hats},
\begin{equation*}
  	\P(\hat{S}^{(n)}(t_*-\ep) \leq \hat{s}(t_*-\ep)+\ep, \hat{\tau}^{(n)}>t_*-\ep)=1.
\end{equation*}
Since
\begin{equation*}
  	\hat{s}(t_*-\ep)+\ep=\hat{s}(0)-(t_*-\ep)+\ep=\hat{s}(0)-t_*+2\ep,
\end{equation*}
and  
\begin{equation*}
  	\Lambda^{(n)}=n-\hat{S}^{(n)}(\hat{\tau}^{(n)}) \geq n-\hat{S}^{(n)}(t_*-\ep),
\end{equation*} 
we obtain that
\begin{equation*}
  	\lim_{n\to\infty}\P\left(\Lambda^{(n)}\geq (1-\hat{s}(0)+t_*-2\ep)n \right)=1.
\end{equation*}
Equation \eqref{sizelb} then follows since $\ep>0$ is arbitrary.

\section*{Acknowledgements}

We thank Rick Durrett for suggesting this problem. 


\bibliographystyle{alea3}
\bibliography{ref}
\end{document}